\numberwithin{equation}{section}
\newtheoremstyle{plain2}{\topsep}{2\topsep}{\itshape}
{0pt}{\bfseries}{.}{.5em}{}
\newtheoremstyle{definition2}{\topsep}{2\topsep}{}
{0pt}{\bfseries}{.}{.5em}{}
\theoremstyle{plain2}
\newtheorem{theorem}{Theorem}[section]
\newtheorem{lemma}[theorem]{Lemma}
\newtheorem{corollary}[theorem]{Corollary}
\theoremstyle{definition2}
\newtheorem{remark}[theorem]{Remark}
\renewcommand{\cite}{\citet}
\def\^#1{\ifmmode {\mathaccent"705E #1} \else {\accent94 #1} \fi}
\def\~#1{\ifmmode {\mathaccent"707E #1} \else {\accent"7E #1} \fi}
\def\*#1{#1^\ast}
\edef\-#1{\noexpand\ifmmode {\noexpand\bar{#1}} \noexpand\else \-#1\noexpand\fi}
\def\>#1{\vec{#1}}
\def\.#1{\dot{#1}}
\def\wh#1{\widehat{#1}}
\def\wt#1{\widetilde{#1}}
\def\atop{\@@atop}
\def\%#1{\mathcal{#1}}
\let\original@left\left
\let\original@right\right
\renewcommand{\left}{\mathopen{}\mathclose\bgroup\original@left}
\renewcommand{\right}{\aftergroup\egroup\original@right}
\renewcommand{\phi}{\varphi}
\newcommand{\eps}{\varepsilon}
\newcommand{\eq}{\eqref}
\newcommand{\dtv}{\mathop{d_{\mathrm{TV}}}\mathopen{}}
\newcommand{\dloc}[1][]{\mathop{d_{\mathrm{loc}}^{\,#1}}\mathopen{}}
\newcommand{\bigo}{\mathop{\mathrm{{}O}}\mathopen{}}
\def\ER{Erd\H{o}s--R\'enyi}
\newcommand{\I}{\mathop{{}\mathrm{I}}}
\newcommand{\Po}{\mathop{\mathrm{Po}}}
\newcommand{\TP}{\mathop{\mathrm{TP}}}
\newcommand{\Bi}{\mathop{\mathrm{Bi}}}
\newcommand{\Be}{\mathop{\mathrm{Be}}}
\newcommand{\Var}{\mathop{\mathrm{Var}}}
\newcommand{\law}{\mathscr{L}}
\def\be#1{\begin{equation*}#1\end{equation*}}
\def\ben#1{\begin{equation}#1\end{equation}}
\def\bes#1{\begin{equation*}\begin{split}#1\end{split}\end{equation*}}
\def\besn#1{\begin{equation}\begin{split}#1\end{split}\end{equation}}
\def\ba#1{\begin{align*}#1\end{align*}}
\def\given{\typeout{Command 'given' should only be used within bracket command}}
\newcounter{@bracketlevel}
\def\@bracketfactory#1#2#3#4#5#6{
\expandafter\def\csname#1\endcsname##1{%
\addtocounter{@bracketlevel}{1}%
\global\expandafter\let\csname @middummy\alph{@bracketlevel}\endcsname\given%
\global\def\given{\mskip#5\csname#4\endcsname\vert\mskip#6}\csname#4l\endcsname#2##1\csname#4r\endcsname#3%
\global\expandafter\let\expandafter\given\csname @middummy\alph{@bracketlevel}\endcsname
\addtocounter{@bracketlevel}{-1}}%
}
\def\bracketfactory#1#2#3{%
\@bracketfactory{#1}{#2}{#3}{relax}{1mu plus 0.25mu minus 0.25mu}{0.6mu plus 0.15mu minus 0.15mu}
\@bracketfactory{b#1}{#2}{#3}{big}{1mu plus 0.25mu minus 0.25mu}{0.6mu plus 0.15mu minus 0.15mu}
\@bracketfactory{bb#1}{#2}{#3}{Big}{2.4mu plus 0.8mu minus 0.8mu}{1.8mu plus 0.6mu minus 0.6mu}
\@bracketfactory{bbb#1}{#2}{#3}{bigg}{3.2mu plus 1mu minus 1mu}{2.4mu plus 0.75mu minus 0.75mu}
\@bracketfactory{bbbb#1}{#2}{#3}{Bigg}{4mu plus 1mu minus 1mu}{3mu plus 0.75mu minus 0.75mu}
}
\newcounter{ctr}\loop\stepcounter{ctr}\edef\X{\@Alph\c@ctr}%
\edef\csname s\X\endcsname{\noexpand\mathscr{\X}}
\edef\csname c\X\endcsname{\noexpand\mathcal{\X}}
\edef\csname b\X\endcsname{\noexpand\boldsymbol{\X}}
\edef\csname I\X\endcsname{\noexpand\mathbbm{\X}}
\def\now{%
\minute=\time%
\hour=\time \divide \hour by 60%
\hourMins=\hour \multiply\hourMins by 60%
\advance\minute by -\hourMins%
\zeroPadTwo{\the\hour}:\zeroPadTwo{\the\minute}%
}
\def\zeroPadTwo#1{\ifnum #1<10 0\fi#1}
\renewcommand\section{\@startsection {section}{1}{\z@}%
{-3.5ex \@plus -1ex \@minus -.2ex}%
{1.3ex \@plus.2ex}%sm
{\center\small\sc\mathversion{bold}\MakeTextUppercase}}
\def\subsection#1{\@startsection {subsection}{2}{0pt}%
{-3.5ex \@plus -1ex \@minus -.2ex}%
{1ex \@plus.2ex}%
{\bf\mathversion{bold}}{#1}}
\def\subsubsection#1{\@startsection{subsubsection}{3}{0pt}%
{\medskipamount}%
{-10pt}%
{\normalsize\itshape}{\kern-2.2ex. #1.}}
\def\blfootnote{\xdef\@thefnmark{}\@footnotetext}
\def\giv{\,|\,}
\def\a{\alpha}
\def\b{\beta}
\def\s{\sigma}
\def\d{\delta}
\def\law{{\mathcal L}}
\def\cV{{\mathcal V}}
\def\ignore#1{}
\def\tW{{\widetilde W}}
\def\hW{{\widehat W}}
\def\uii{^{(i)}}
\def\tG{{\widetilde G}}
\def\tV{{\widetilde V}}
\def\l{\lambda}
\def\Eq{\ =\ }
\def\Le{\ \le\ }
\def\Def{\ :=\ }
\def\nin{\noindent}
\def\tD{{\widetilde D}}
\def\delloc{\d_{\mathrm{loc}}}
\def\Ref{\eqref}
\def\m{\mu}
\def\ex{\mbE}
\def\pr{\mbP}
\def\non{\nonumber}
\def\bone{\mathbf{1}}
\def\hcI{\widehat{\mathcal{I}}}
\def\leb{{\rm Leb}}
\def\card{{\rm card}}
\def\tD{{\widetilde D}}
\def\url#1{{\tt #1}}
\newcommand{\registered}
   {{\scriptsize \ooalign{\hfil\raise0.07ex\hbox{\scriptsize \sc r}\hfil%
              \crcr\mathhexbox20D}}}
\newcommand{\nc}{\newcommand}
\nc{\ds}{\displaystyle}
\nc{\mbZ}{\mathbb Z}
\nc{\mbQ}{\mathbb Q}
\nc{\mbR}{\mathbb R}
\nc{\mbC}{\mathbb C}
\nc{\mbN}{\mathbb N}
\nc{\mbE}{\mathbb E}
\nc{\mbP}{\mathbb P}
\nc{\PH}{\emph{PH} }
\nc{\ME}{\emph{ME} }
\nc{\LST}{\emph{LST} }
\nc{\rank}{\mbox{rank\hspace{1pt}}}
\newcommand{\Wdeq}{\wh W_d}
\begin{document}

\title{\sc\bf\large\MakeUppercase{Local limit theorems for occupancy models}}
\author{\sc A. D. Barbour\thanks{Universit\"at Z\"urich; {\tt a.d.barbour@math.uzh.ch}},
Peter Braunsteins\thanks{University of Melbourne; {\tt braunsteins.p@unimelb.edu.au}},
Nathan Ross\thanks{University of Melbourne; {\tt nathan.ross@unimelb.edu.au}}
}
%\date{\it  }
\maketitle

\begin{abstract}  
We present a rather general method for proving local limit theorems, with a good rate of convergence, for sums of
dependent random variables.  The method is applicable when a Stein coupling can be exhibited.    
Our approach involves both Stein's method for distributional approximation and Stein's method for concentration.
As applications, we
prove local central limit theorems with rate of convergence for the number of germs with~$d$ neighbours in a germ--grain 
model, and the number of degree-$d$ vertices in an \ER\ random graph.  In both cases, the error rate is optimal,
up to logarithmic factors.
\end{abstract}
%\noindent\textbf{Keywords: } 

%%%%%%%%%%%%%%%%%%%%%%%%%%%%%%%%%%%%%%%%%%%%%%%%%%%%%%%%

\section{Introduction}

Local central limit theorems (LCLTs) for sums of independent random variables have been well studied,
largely using characteristic function techniques; see \cite[Chapter~{VII}.1]{Pet12}. For the standard example, if $X_1, X_2, \dots$ 
are i.i.d.\ aperiodic integer valued 
random variables with finite third moment, and $W := \sum_{i=1}^n X_i$, with $\mbE W = \mu$ and $\Var(W)=\sigma^2$, 
then 
\begin{equation}\label{iidPetrov}
   \sup_{k} \left| \mbP(W = k) - \frac{1}{\sigma \sqrt{2\pi}} \exp \left\{ -\frac{1}{2\sigma^2}(k-\mu)^2 \right\} \right|
         \Eq \bigo(1/\sigma^2).
\end{equation}
When the summands $X_i$ are dependent, there are few general methods available for proving LCLTs with error bounds.
In this paper, we combine results from \cite{Rol15}, \cite{Bar18} and \cite{Bar19} to present an approach that 
is quite widely applicable. We illustrate its power in the context of certain \emph{occupancy models}.

Our random variables of interest take the form
\begin{equation}\label{Occd}
   \Wdeq \Def  \sum_{i=1}^n \I\{M_i = d\}, \quad d \geq 0,
\end{equation}
where $n$ is the number of \emph{locations} and $M_i$ is the number of \emph{occupants} at 
 location $i \in [n] :=\{1, \dots,n \}$ in an occupancy model.  Examples of occupancy models include
\begin{enumerate}
\item \emph{multinomial occupancy models}, where $m$ balls (the occupants) are placed in~$n$ urns (the locations), 
     independently at random, and~$M_i$ is the number of balls in urn $i$;
\item \emph{\ER\ random graphs}, where edges (the occupants) between distinct 
pairs of~$n$ vertices (the locations) are independently present or absent, with a common probability~$p$, 
and~$M_i$ is the degree of vertex~$i$;
\item \emph{germ--grain models\/}, where~$n$ points (the occupants), which form the centres of balls of fixed radius 
(the locations), are placed uniformly at random in a bounded subset of~$\mbR^2$, and~$M_i$ is the number of points 
that fall in ball~$i$.
\end{enumerate}
Here, we consider only sequences of \emph{sparse} occupancy models, in which, as the number of 
locations~$n$ increases, the expected number of occupants at each location remains bounded.

%There are a number of methods that we may consider to establish LCLTs for the occupancy models listed above.
For multinomial occupancy models, \cite{Hwa08} prove an LCLT with optimal error rate~$\bigo(1/\sigma^2)$. 
Their argument relies on a simple observation: if the number of balls~$m$ were Poisson distributed rather than fixed, 
then the occupancy counts $\{M_i\}_{i \in [n]}$ would be independent. 
They prove the result by applying an LCLT for i.i.d.\ random variables to the Poissonized version of the problem and then use a de-Poissonization argument to transfer back to the original.
%Adaptations of the LCLT for i.i.d.\ random variables 
%are applied to the Poissonized version of the problem, and then a de-Poissonization argument is used to transfer 
%to the original problem. 
However, in more complex occupancy models such as \ER\ random graphs and germ--grain models, Poissonization does not 
lead to a similar simplification,
and it is therefore unclear how to adapt these arguments to such models.
%.  It is therefore unclear how to adapt the arguments of \cite{Hwa08} to such models. 
See also \cite{McDonald2005} and references therein for other methods that have potential 
to prove LCLTs for sums of dependent variables, but which are not adapted to our applications.

A general method used to prove LCLTs, introduced in  \cite{Mcd80},
is to combine a (global) central limit theorem with some condition implying \emph{smoothness} of the distribution 
being approximated (here, $\law(\Wdeq)$). A common way of quantifying the smoothness, used in \cite{Mcd80} 
and~\cite{Penrose2011},  is to find an embedded sum of 
independent random variables which are themselves smooth, in the sense that they satisfy an LCLT. 
Here we use a different notion of the smoothness of a distribution, given in~\eq{SmoothDef} below, which is closely 
related to that given in \cite{Davis1995}, 
and elaborated on in \cite{Rol15}. The latter paper demonstrates that bounds in a global metric (such as the 
Kolmogorov metric) to a target distribution, combined with appropriate bounds on the smoothness term~\eq{SmoothDef}, 
imply an LCLT with error. For \ER\ random graphs, for example, if the (optimal) Kolmogorov bounds 
in \cite{Gol13} and the smoothness bounds implicit in Lemma~\ref{ERUP} of this paper are combined with the 
Landau--Kolmogorov 
inequality in \cite[Theorem 2.2]{Rol15}, then an LCLT is obtained, but with an error bound of 
order~$\bigo(\sigma^{-3/2})$, which is substantially worse than 
our target order of~$\bigo(1/\sigma^{2})$.

\cite{Bar19} (see also \cite{Rol05}) combine the smoothness approach with Stein's method for distributional 
approximation to establish a method for proving LCLTs with error, in settings where the dependence 
between summands can be described in terms of a \emph{Stein coupling} (see \cite{Che10}, as well as~\eqref{SteCouDef}
below).
The Stein coupling most commonly applied to the occupancy counts~$\Wdeq$ is a \emph{size biased coupling\/}; 
see Section \ref{sBSBC}. 
The standard method of constructing outcomes of a size biased version $\Wdeq^s$ of~$\Wdeq$ is to take the 
configuration of occupants, labelled $\mathcal{X}$, and to modify it so that a single location, chosen uniformly at random, 
now has~$d$ occupants, and the remaining configuration has the conditional distribution, given this event.
The number of locations~$\Wdeq^s$ with~$d$ occupants in this modified configuration then has the size biased distribution of $\Wdeq$.
Given such a construction, 
\cite{Bar19} demonstrate that to obtain an LCLT with error for $\Wdeq$ we must essentially do two things:
%we must essentially do two things to apply the general bound in \cite{Bar19}:
{\bf (i)}~establish a concentration inequality for 
\[
   \Psi \Def |\mbE\{\Wdeq^s-\Wdeq \giv \mathcal{X}\}-\mbE\{\Wdeq^s-\Wdeq\}|,
\] 
and {\bf (ii)} prove that the distribution of~$\Wdeq$ is smooth.

%In many examples, the obstacle that prevents the bound in \cite{Bar19} from being applied is \emph{(i)}: 
Establishing a concentration inequality for~$\Psi$ is often the hardest task.
In most cases,~$\Psi$ is a complicated expression (for example, see Lemma~\ref{GGL1}), to which it is unclear how 
to apply standard concentration results, including those related to Stein's method.
This significantly limits the scope of the bound in \cite{Bar19}.
Indeed, when the authors use this method to prove an LCLT with error rate $\bigo(\sigma^{-2} (\log\sigma)^{1/2})$ 
for the number of \emph{isolated} ($d=0$) vertices in an \ER\ random graph, they do so by demonstrating that, 
when $d=0$, $\Psi$ has a particularly simple expression, to which established concentration inequalities can be applied; however,
when $d>0$ , $\Psi$ is significantly more complex, and a new approach is required.
In this paper we demonstrate that the recent results in \cite{Bar18}, which provide a widely applicable method for deriving central moment inequalities, can often be used to establish the necessary concentration inequalities for these more complicated expressions.
To highlight the connection to \cite{Bar19}, in Theorem~\ref{BRRT} we rewrite their general bound in terms of central moment inequalities. By applying these two bounds in tandem we obtain a relatively general method for proving LCLTs with near optimal error rate.
%However, the recent results 
%of \cite{Bar18} provide a widely applicable method for deriving central moment inequalities, which can 
%be used to establish the necessary concentration inequalities for these more complicated expressions.

When we consider germ--grain models, there is an additional complication: if we apply a standard size biased coupling, 
as described above, then it is difficult even to find an expression for~$\Psi$.
We overcome this issue by using the \emph{bounded size biased couplings\/} proposed in \cite{Bart18} for 
\be{
   W_d \Def n-\Wdeq \Eq \sum_{i=1}^n \I\{M_i \not= d\},
} 
the number of locations that do {\it not\/} have~$d$ occupants. There
the authors demonstrate how to construct size biased versions of~$W_d$ in occupancy models, by moving at most a 
single occupant from its original location.
Such  couplings allow us not only to find an expression for~$\Psi$ in germ--grain models, but also to improve 
the bound in the \ER\ random graph application.

Thus, in this paper, we piece together the general LCLT bounds in \cite{Bar19}, the central moment inequalities 
in \cite{Bar18}, the bounded size biased couplings in \cite{Bart18}, and the smoothness terms and bounds 
of \cite{Rol15}, to establish a robust method for proving LCLTs for the number of locations with~$d$ occupants 
in occupancy models, together with error bounds that are of the same order as would be expected for
sums of independent indicators, up to logarithmic factors. The logarithmic factors arise from the concentration inequalities used to handle $\Psi$, and can only be avoided using  our method by modification in special examples; see \cite[Remark~2.9]{Bar19}.
We emphasise that the contribution of the paper is twofold: obtaining LCLTs with good error rate in two non-trivial 
examples --- already an interesting and difficult undertaking --- and also providing a straightforward method 
for obtaining LCLTs with error that is applicable in a wide variety of other settings.

The paper is organised as follows. In Section \ref{Sec:Stein}, we provide the key result, Theorem~\ref{BRRT}, for  
establishing LCLTs with rate. In Section \ref{Sec:MainRes}, we state LCLTs with rate for occupancy models obtained 
by applying Theorem~\ref{BRRT}.  In Section~\ref{Sec:Proofs}, we apply the framework established in
 Section~\ref{Sec:Stein} to prove the results in Section~\ref{Sec:MainRes}. Section~\ref{sec:appenstn} gives a 
derivation of Theorem~\ref{BRRT} from the results of \cite{Bar19}, and Section~\ref{sec:branchbinom} contains two 
auxiliary results used in the proofs.

\section{Stein's method}\label{Sec:Stein}

Stein's method is a powerful tool for bounding the error in the approximation 
of a distribution of interest by another well--understood target distribution. It 
was first developed for approximation by the normal distribution in \cite{Stein1972, Stein1986},
and by the Poisson distribution in \cite{Chen1975}; see \cite{Barbour1992}, \cite{BarbourChen2005}, \cite{Chen2011} and \cite{Ross2011} for various introductions to the method.

\subsection{Stein's method for LCLTs}\label{SMLCLTs}

We use Stein's method to bound the distance between $W$ and an integer valued target distribution $Z$, in the 
\emph{total variation metric},
\[
   \dtv \bclr{\law(W),\law(Z)} \Def \sup_{A \subseteq \mathbb{Z}} | \mbP [W \in A] - \mbP[Z \in A] |,
\] 
as well as in a metric to capture the local differences,
\[
  \dloc \bclr{\law(W),\law(Z)} \Def \sup_{a \in \mathbb{Z}} | \mbP [W = a] - \mbP[Z = a] |.
\]

Following \cite{Rol07}, we use \emph{translated Poisson} distributions as the approximating family, instead of the 
discretized normal. 
We say that a random variable~$Z$ has the translated Poisson distribution, and write $Z \sim \TP(\mu,\sigma^2)$, 
if $Z-s \sim \Po(\sigma^2 + \gamma)$, where 
\ben{\label{TP-notation}
   s \Def \lfloor \mu - \sigma^2 \rfloor, \qquad \gamma \Def \mu - \sigma^2 - \lfloor \mu - \sigma^2 \rfloor,
}
and where $\Po(\lambda)$ denotes the Poisson distribution with mean~$\lambda$. 
Note that $\mbE Z = \mu$ and $\sigma^2 \leq \Var Z \leq \sigma^2 +1$. Thus, the translated Poisson distribution 
is a Poisson distribution translated by an integer so that both its mean and variance closely approximate $\mu$ 
and~$\sigma^2$.
The next result shows that the translated Poisson and the discretised normal are sufficiently close for the purposes of LCLTs. In particular, it implies that in the LCLTs given in Theorems~\ref{main} and~\ref{GGlclt} below, the translated Poisson distribution and the discretized normal distribution are interchangeable. The lemma follows by  applying~\eqref{iidPetrov}
to $(Z-s)\sim\Po(\sigma^2+\gamma)$ and using basic properties of the Poisson distribution and normal density.

\begin{lemma}
If $Z \sim \TP(\mu, \sigma^2)$, then there exists a constant $C >0$ such that, for all $\mu \in \mathbb{R}$ and 
$\sigma^2 \geq 1$, 
\[
  \sup_{n \in \mathbb{Z}} \left| \mbP(Z=n) - 
     \frac{1}{\sigma\sqrt{2\pi }}\exp \left( - \frac{(n-\mu)^2}{2 \sigma^2}\right) \right| \Le \frac{C}{\sigma^2}.
\]
\end{lemma}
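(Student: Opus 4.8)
The plan is to follow the route indicated just before the statement: realise $Z-s$ as a Poisson variable, apply the i.i.d.\ local CLT~\eqref{iidPetrov} to it, and then make two elementary corrections --- one for the integer shift $s$, one for the discrepancy between the Poisson parameter $\sigma^2+\gamma$ and the target variance $\sigma^2$. \textbf{Step 1 (reduce to a Poisson local CLT).} Write $\lambda:=\sigma^2+\gamma$, so that $Y:=Z-s\sim\Po(\lambda)$ with $\lambda\ge\sigma^2\ge1$. Put $m:=\lfloor\lambda\rfloor\ge1$; then $Y\eqlaw\sum_{i=1}^m X_i$ with $X_i$ i.i.d.\ $\Po(\lambda/m)$, and $\lambda/m\in[1,2)$ since $m\le\lambda<m+1\le 2m$. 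Over this compact parameter range the summands are aperiodic and integer valued, with third absolute moment bounded above and variance bounded away from zero; hence the constant in~\eqref{iidPetrov} can be chosen independently of $\lambda$. As $\mbE Y=\Var Y=\lambda$, \eqref{iidPetrov} gives an absolute constant $C_1$ with
\[
  \sup_{k\in\mbZ}\bigl|\mbP(Y=k)-(2\pi\lambda)^{-1/2}\e^{-(k-\lambda)^2/(2\lambda)}\bigr|\Le\frac{C_1}{\lambda}\Le\frac{C_1}{\sigma^2}.
\]
(For $\lambda$ in any bounded set this is trivial, both terms being at most $1$, so only large $\lambda$ invokes~\eqref{iidPetrov}; alternatively this display can be derived directly from Stirling's formula.)

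\textbf{Step 2 (undo the shift).} For $n\in\mbZ$ we have $\mbP(Z=n)=\mbP(Y=n-s)$, and by~\eqref{TP-notation}, $s+\lambda=\lfloor\mu-\sigma^2\rfloor+\sigma^2+\gamma=\mu$, so $n-s-\lambda=n-\mu$; substituting into the previous display yields $\sup_{n}\bigl|\mbP(Z=n)-(2\pi\lambda)^{-1/2}\e^{-(n-\mu)^2/(2\lambda)}\bigr|\le C_1/\sigma^2$ with $\lambda=\sigma^2+\gamma$. \textbf{Step 3 (replace $\lambda$ by $\sigma^2$).} Fix $x:=n-\mu\in\mbR$ and set $h(\tau):=(2\pi\tau)^{-1/2}\e^{-x^2/(2\tau)}$ for $\tau>0$. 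A direct computation gives
\[
  h'(\tau)=\frac{1}{\sqrt{2\pi}}\,\tau^{-3/2}\e^{-x^2/(2\tau)}\Bigl(\tfrac{x^2}{2\tau}-\tfrac12\Bigr),
\]
so $|h'(\tau)|\le(2\pi)^{-1/2}c_0\,\tau^{-3/2}$ with $c_0:=\sup_{u\ge0}\e^{-u}|u-\tfrac12|<\infty$. Since $0\le\gamma<1$, the mean value theorem and $\sigma\ge1$ give
\[
  \bigl|h(\sigma^2+\gamma)-h(\sigma^2)\bigr|\Le\gamma\!\!\sup_{\sigma^2\le\tau\le\sigma^2+1}\!\!|h'(\tau)|\Le\frac{c_0}{\sqrt{2\pi}}\,\sigma^{-3}\Le\frac{c_0}{\sqrt{2\pi}}\,\frac{1}{\sigma^2}.
\]
Combining the estimates of Steps 2 and 3 by the triangle inequality proves the lemma with $C=C_1+c_0/\sqrt{2\pi}$.

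\textbf{Main obstacle.} There is essentially none of substance. The only point needing care is that the constant in~\eqref{iidPetrov}, stated for a single fixed i.i.d.\ sequence, must be made uniform over the family $\{\Po(\lambda):\lambda\ge1\}$ --- this is precisely why $\Po(\lambda)$ is split into $m$ i.i.d.\ copies of $\Po(\lambda/m)$ with parameter trapped in the compact interval $[1,2)$ (or one simply cites a standard quantitative local CLT for the Poisson law). Everything else --- the identity $s+\lambda=\mu$ and the one-line derivative bound on $h$ --- is routine calculus.
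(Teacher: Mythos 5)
Your proof is correct and follows exactly the route the paper sketches in the sentence before the lemma: apply~\eqref{iidPetrov} to $Z-s\sim\Po(\sigma^2+\gamma)$, then use the translation identity $s+\lambda=\mu$ and a derivative bound on the normal density in its variance parameter to absorb $\gamma\in[0,1)$. The one place requiring care --- that the $\bigo(1/\sigma^2)$ constant in~\eqref{iidPetrov} be uniform over the family $\Po(\lambda)$, $\lambda\ge1$ --- is handled appropriately by your decomposition into $\lfloor\lambda\rfloor$ i.i.d.\ $\Po(\lambda/\lfloor\lambda\rfloor)$ summands with parameter trapped in $[1,2)$.
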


The general LCLT theorem that we use requires that the variable of interest~$W$ is part of a \emph{Stein coupling\/}. 
Following \cite{Che10}, we say that the random variables $(W,W^{'},G,R)$ form an \emph{approximate Stein coupling\/} if 
\begin{equation}\label{SteCouDef}
    \mbE [ (W-\mu) f(W) ] \Eq \mbE [G( f(W^{'}) - f(W)) ]  + \mbE[R f(W)],
\end{equation}
for all $f$ such that the expectations exist. If $R=0$ almost surely, we call $(W,W',G)$ a \emph{Stein coupling\/}. 
The final ingredients needed for the general LCLT are the following standard probabilistic measures of 
smoothness for integer valued distributions:
\begin{equation}\label{SmoothDef}
  S_l(\law(W)) \Def \sup_{h\colon\norm{h}_{\infty}\leq 1} \abs{\mbE \Delta^l h(W)}, \qquad  l \geq 1,
\end{equation}
where $\Delta^l h(i) := \Delta^{l-1} (h(i+1) - h(i))$. Note that
$S_1(\law(W)) = 2 \dtv(\law(W), \law(W+1))$.

The following is a concise and more easily applicable modification of Corollary~2.3 and Lemma~2.6 of \cite{Bar19}, 
for proving LCLTs using Stein's method.  
As the statement here is not easily read from their results, we provide a proof below in Section~\ref{sec:appenstn}.
Here and below, for a random variable $X$ and $q>0$, we denote
\be{
\norm{X}_q:=\bclr{\mbE \bcls{\abs{X}^q} }^{1/q}.
}

\begin{theorem}\label{BRRT}
Let $(W,W',G,R)$ be an approximate Stein coupling with $W$ and~$W'$ integer valued, $\mbE W= \mu$ and $Var(W)= \sigma^2$. 
Set $D := W'-W$, and let $\mathcal{F}_1$ and $\mathcal{F}_2$ be sigma algebras such that~$W$ is $\mathcal{F}_1$-measurable 
and such that $(G,D)$ is $\mathcal{F}_2$-measurable. Define 
\begin{equation}\label{Updef}
   \Upsilon \Def \mbE\bcls{ \abs{ GD(D-1) }  \, S_2\bclr{\mathcal{L}(W| \mathcal{F}_2)}},
\end{equation}
and 
\[
T:= \bigl| \mbE[GD \giv \mathcal{F}_1] - \mbE[GD] \bigr|.
\]
%assume that, for some integer $k \geq 0$ and for some non-negative random variables $T_1, \dots T_{k}$, we have
%\[ 
%  \bigl| \mbE[GD \giv \mathcal{F}_1] - \mbE[GD] \bigr| \Le \sum^{k}_{l=1} T_l .
%\]
If \ignore{$\sigma>1$ and}  $c_1$ is such that 
\ben{\label{eq:upsrti2bd}
      \max\Bigl\{\Upsilon+1,\norm{R}_2,\s^{-1}\norm{T}_2 \Bigr\} \Le c_1,
}
then
\ben{\label{eq:ezbd1}
  \dtv\left(\mathcal{L}(W), TP(\mu, \sigma^2) \right)  \Le 5c_1\sigma^{-1}.
}
If, in addition, $c_2$ is such that, for $q = \ceil{\log(\sigma)}$, 
\ben{\label{eq:ezbd2}
    \s^{-1}\norm{T}_q \Le c_2,%\qquad 1 \le l\le k, \label{eq:tmombd}
}
and if 
\ben{\label{c-condition}
     c_1 + ec_2 \ <\ \s/2,
}
then %there is a constant~$B$, depending only on $b$ and $k$, such that
\ben{\label{eq:ezbd}
  \dloc\left(  \mathcal{L} (W), TP(\mu,\sigma^2) \right) \Le 4(4c_1 + ec_2)\s^{-2}.
}
\end{theorem}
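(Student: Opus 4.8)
The plan is to derive Theorem~\ref{BRRT} from the two building blocks advertised in the introduction, namely the general LCLT machinery of \cite{Bar19} (which provides bounds in terms of a Stein coupling together with a concentration term $\Psi$), and the total variation bound in terms of the smoothness quantity $\Upsilon$. First I would recall from \cite{Bar19} the precise statement of Corollary~2.3 and Lemma~2.6: these bound $\dtv(\law(W),\TP(\mu,\sigma^2))$ and $\dloc(\law(W),\TP(\mu,\sigma^2))$ in terms of a collection of error quantities, one of which is a moment of $GD$ (contributing the $\Upsilon$ term once one passes to the conditional smoothness $S_2(\law(W\giv\mathcal{F}_2))$ inside the expectation), one of which is $\norm{R}_2$, and one of which is built from the conditional bias $T = |\mbE[GD\giv\mathcal{F}_1] - \mbE[GD]|$. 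The role of the sigma-algebra $\mathcal{F}_1$ is to play the role of the $\sigma$-algebra generated by the coupling, so that $T$ is exactly the analogue of the quantity $\Psi$ from the introduction; the role of $\mathcal{F}_2$ is to allow the smoothness of $\law(W)$ to be estimated conditionally, which is what makes $\Upsilon$ a usable quantity in the applications.

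The main bookkeeping step is to check that the hypotheses \eqref{eq:upsrti2bd}, \eqref{eq:ezbd2} and \eqref{c-condition} imply the hypotheses of the \cite{Bar19} results, and that their conclusions, after inserting $c_1$ and $c_2$ and simplifying, give exactly the clean bounds \eqref{eq:ezbd1} and \eqref{eq:ezbd}. For the total variation bound \eqref{eq:ezbd1} this should be essentially a matter of collecting the three terms $\Upsilon$, $\norm{R}_2$ and $\sigma^{-1}\norm{T}_2$, each bounded by $c_1$, absorbing the additive constants (this is why $\Upsilon+1$ rather than $\Upsilon$ appears in \eqref{eq:upsrti2bd}), and tracking the numerical constant to see it is at most $5$. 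For the local bound \eqref{eq:ezbd}, the extra ingredient is that \cite{Bar19} need control of a higher moment of the conditional bias — a moment of order $q\asymp\log\sigma$ — in order to turn a total variation estimate into a local estimate via their smoothing/concentration argument; this is exactly what \eqref{eq:ezbd2} supplies, and $q = \ceil{\log\sigma}$ together with the side condition \eqref{c-condition} (which guarantees the relevant concentration argument does not degenerate, i.e. the error is genuinely smaller than the scale $\sigma$) is what produces the factor $(4c_1 + ec_2)\sigma^{-2}$, the $e$ coming from the $q$-th-root normalisation $e^{q/q}$-type estimate on a $\log\sigma$-th moment.

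I expect the main obstacle to be purely expository rather than mathematical: the results in \cite{Bar19} are stated in a different, less streamlined language — for instance their smoothness terms, their choice of conditioning, and their treatment of the remainder $R$ are organised differently — so the real work is a careful translation, making sure that (a) the quantity they call the concentration error really is dominated by $\sigma^{-1}\norm{T}_q$ with the stated $q$, (b) the conditional smoothness $S_2(\law(W\giv\mathcal{F}_2))$ may legitimately be pulled inside the expectation defining $\Upsilon$ (this uses that $W$ is $\mathcal{F}_1$-measurable and $(G,D)$ is $\mathcal{F}_2$-measurable, so that the relevant conditional expectations factor correctly), and (c) no hypothesis such as $\sigma>1$ is secretly needed — indeed the commented-out ``$\sigma>1$'' in \eqref{eq:upsrti2bd} suggests the authors verified it is implied by \eqref{c-condition} together with $\Upsilon+1\le c_1$. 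Once these identifications are made, inserting the uniform bounds $c_1,c_2$ and simplifying constants finishes the proof; since the excerpt explicitly defers this to Section~\ref{sec:appenstn}, a short proof here can legitimately say that the two displays follow by specialising Corollary~2.3 and Lemma~2.6 of \cite{Bar19} with $\mathcal{F}_1,\mathcal{F}_2$ as above and bounding each error term by $c_1$ or $c_2$.
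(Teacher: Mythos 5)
Your high-level plan matches the paper's: Theorem~\ref{BRRT} is deduced by specialising the restatement of Corollary~2.3 and Lemma~2.6 of \cite{Bar19} (given in the paper as Theorem~\ref{BRRTdeet}), inserting the bounds $\max\{\Upsilon+1,\norm{R}_2,\sigma^{-1}\norm{T}_2\}\le c_1$ and $\sigma^{-1}\norm{T}_q\le c_2$, and tidying constants. You have also correctly identified where the constant $e$ comes from: Markov's inequality applied to $(\sigma^{-1}T)^q$ at level $t=ec_2$, with $q=\lceil\log\sigma\rceil$, gives $t^{-1}\mbE[T\,\I\{\sigma^{-1}T\ge t\}]\le \sigma(c_2/t)^q\le\sigma e^{-q}\le 1$.

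What you gloss over is the one genuinely non-routine step, which is exactly where the side condition~\eqref{c-condition} is used. The local bound of \cite{Bar19} involves $\sup_{a\in\mathbb{Z}}\mbP(W=a)$, which is estimated by
\[
\sup_{a\in\mathbb{Z}}\mbP(W=a)\ \le\ \dloc\bclr{\law(W),\TP(\mu,\sigma^2)}+\sigma^{-1},
\]
so the local distance $\delloc:=\dloc(\law(W),\TP(\mu,\sigma^2))$ appears on \emph{both} sides of the inequality. Collecting terms one is left with something of the form $\delloc\le (8c_1+2ec_2)\sigma^{-2}+(c_1+ec_2)\sigma^{-1}\delloc$, and \eqref{c-condition} is precisely what forces $(c_1+ec_2)\sigma^{-1}\le 1/2$, permitting the rearrangement to $\delloc\le 4(4c_1+ec_2)\sigma^{-2}$. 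Reading~\eqref{c-condition} as a generic ``non-degeneracy'' hypothesis misses this self-referential (bootstrap) structure, and without it you would be stuck at the final step. A smaller point: your item (b) is a non-issue, since $S_2(\law(W\giv\mathcal{F}_2))$ already sits inside the expectation defining $\Upsilon$ in both Theorem~\ref{BRRT} and Theorem~\ref{BRRTdeet}; the conditioning on $\mathcal{F}_2$ is a feature of the inputs from \cite{Bar19}, not something to be derived in this proof.
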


\begin{remark}\label{c_2-dependence}
 In our applications, $c_1$ is fixed as~$\s$ increases, whereas $c_2 = c_2(\s)$ grows like
$(\log\s)^{\a}$, for some fixed $\a > 0$; hence the total variation bound is of order $O(\s^{-1})$,
whereas the local bound is of order $O(\s^{-2}(\log\s)^{\a})$.  
For such choices, \Ref{c-condition} is satisfied for all~$\s$ large enough.  If it were not satisfied,
then the bound in~\eqref{eq:ezbd} would be of order comparable to typical point probabilities, and
would thus be of little use.
We typically write $T=\sum_{l=1}^k T_l$, where the~$T_l$, $1\le l\le k$, have a structure enabling their norms
to be bounded using a main result of \cite{Bar18}, given as Theorem~\ref{Yuting} below,
for convenience and completeness. Then bounds related to~$T$ of~\eq{eq:upsrti2bd} and~\eq{eq:ezbd2} can be 
verified using the triangle (or Minkowski's) inequality for $\norm{\cdot}_q$ for $q\geq 1$.   The quantity~$R$ is zero almost surely.
It remains to bound~$\Upsilon+1$ by a constant~$c_1$.
 To do so, note that, in the typical regime,~$D$ is of constant order in~$\sigma$ and~$G$ is of 
order~$\sigma^2$.
Thus, establishing $\Upsilon+1\leq c_1$ boils down to showing that $S_2(\law(W)\giv \cF_2)=\bigo(\sigma^{-2})$. 
Conditioning on $\cF_2$ is handled by  ``freezing'' an asymptotically negligible part of the randomness, and then 
the term behaves similarly  to $S_2(\law(W))$. We can then apply established methods for bounding~$S_2(\cdot)$; 
see, for instance, \cite[Chapter II.14]{Lin02}, \cite{Mat07} and \cite{Rol15}.
\end{remark}

\begin{theorem}[Theorem~2.2 of \cite{Bar18}]\label{Yuting}
Suppose that $(Y,Y',G)$ is a Stein coupling  with $\mbE \{ |Y'-\mu |^r \} \leq \mbE \{ | Y-\mu |^r \}$, for some $r\in\IN$. 
Then 
\[
   \norm{Y- \mu}_r \Le \sqrt{2 (r-1) \norm{ G}_r \norm{Y'-Y}_r}.
\]
\end{theorem}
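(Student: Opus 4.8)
The plan is to run the classical Stein--coupling moment argument. Assume throughout that $r\ge 2$ (for $r=1$ the asserted bound is degenerate) and that $\norm{Y-\mu}_r$, $\norm{G}_r$ and $\norm{Y'-Y}_r$ are all finite, the inequality being vacuous otherwise; a truncation of the test function introduced below, together with monotone convergence, removes this finiteness assumption if desired. The engine is the Stein coupling identity \eqref{SteCouDef} (with $R\equiv 0$), applied to the test function $f(y):=\sgn(y-\mu)\,\abs{y-\mu}^{r-1}$. Since $(y-\mu)f(y)=\abs{y-\mu}^{r}$, this gives
\[
  \norm{Y-\mu}_r^{r} \;=\; \mbE\bcls{(Y-\mu)f(Y)} \;=\; \mbE\bcls{G\bclr{f(Y')-f(Y)}}.
\]

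Next I would control the increment $f(Y')-f(Y)$. On $\mbR$ the function $f$ is continuously differentiable with $f'(t)=(r-1)\abs{t-\mu}^{r-2}$, and since $t\mapsto\abs{t-\mu}^{r-2}$ is convex, its maximum over the interval with endpoints $Y$ and $Y'$ is attained at one of those endpoints. Therefore
\[
  \abs{f(Y')-f(Y)} \;\le\; (r-1)\,\abs{Y'-Y}\,\max\bclr{\abs{Y-\mu},\abs{Y'-\mu}}^{r-2}
     \;\le\; (r-1)\,\abs{Y'-Y}\bclr{\abs{Y-\mu}^{r-2}+\abs{Y'-\mu}^{r-2}},
\]
and inserting this into the previous display yields
\[
  \norm{Y-\mu}_r^{r} \;\le\; (r-1)\,\mbE\bcls{\,\abs{G}\,\abs{Y'-Y}\bclr{\abs{Y-\mu}^{r-2}+\abs{Y'-\mu}^{r-2}}\,}.
\]

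Then I would apply H\"older's inequality to each of the two expectations on the right with exponents $r$, $r$ and $r/(r-2)$ (their reciprocals summing to $1$; for $r=2$ the third factor is identically $1$ and one uses the Cauchy--Schwarz inequality instead), obtaining
\[
  \mbE\bcls{\abs{G}\,\abs{Y'-Y}\,\abs{Y-\mu}^{r-2}} \;\le\; \norm{G}_r\,\norm{Y'-Y}_r\,\norm{Y-\mu}_r^{r-2},
\]
and likewise with $\norm{Y'-\mu}_r^{r-2}$ in place of $\norm{Y-\mu}_r^{r-2}$. The hypothesis $\mbE\abs{Y'-\mu}^{r}\le\mbE\abs{Y-\mu}^{r}$, i.e.\ $\norm{Y'-\mu}_r\le\norm{Y-\mu}_r$, bounds the second of these terms by the first, so that
\[
  \norm{Y-\mu}_r^{r} \;\le\; 2(r-1)\,\norm{G}_r\,\norm{Y'-Y}_r\,\norm{Y-\mu}_r^{r-2}.
\]
Dividing by $\norm{Y-\mu}_r^{r-2}$ (the inequality being trivial when this vanishes) and taking square roots gives the claim.

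The one step that needs genuine care is the increment bound: invoking convexity of $\abs{t-\mu}^{r-2}$ is what lets one dominate $\sup f'$ over the random interval between $Y$ and $Y'$ by its endpoint values, and hence keep the bound expressed purely through $\abs{Y-\mu}$, $\abs{Y'-\mu}$ and $\abs{Y'-Y}$ --- precisely the form needed for the subsequent H\"older split to deliver every factor in the $\norm{\cdot}_r$ norm. The choice of test function, the application of H\"older, and the final rearrangement are all routine.
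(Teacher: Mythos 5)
Your argument is correct and follows the same route as the cited proof in Barbour, Ross and Wen: apply the Stein coupling identity with the test function $f(y)=\sgn(y-\mu)\abs{y-\mu}^{r-1}$, bound the increment via $\abs{f(Y')-f(Y)}\le(r-1)\abs{Y'-Y}\bigl(\abs{Y-\mu}^{r-2}+\abs{Y'-\mu}^{r-2}\bigr)$, split by H\"older with exponents $(r,r,r/(r-2))$, absorb the $Y'$ term using the moment hypothesis, and cancel $\norm{Y-\mu}_r^{r-2}$. The convexity/MVT justification of the increment bound and the handling of the degenerate case $r=1$ are both fine.
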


\subsection{Bounded size biased couplings}\label{sBSBC}

The Stein couplings that we use here, when we apply Theorem~\ref{BRRT}, are \emph{size biased couplings\/}. For a 
non-negative random variable~$W$, we say that $W^s$ has the $W$-size biased distribution if 
\begin{equation}\label{sizebiasdef}
    \mbE[W f(W) ] \Eq \mu \mbE [f (W^s) ] 
\end{equation}
for all functions $f$ for which these expectations exist; moreover, if $W^s$ and~$W$ are defined on the same probability 
space, then we say that $(W,W^s)$ form a \emph{size-biased coupling\/}. It is not difficult to verify that 
$(W,W',G)=( W, W^s, \mu)$ satisfies~\eqref{SteCouDef} with $R=0$, and is therefore a Stein coupling.

The standard method used to construct sized biased couplings for occupancy counts~$\Wdeq$, defined at \eqref{Occd}, 
such that $\IP(M_i=d)$ is independent of~$i$, relies on the identity:  
\begin{equation}\label{SBid}
   \mbE[\Wdeq f(\Wdeq)] \Eq \sum^n_{i=1} \mbP[M_i=d] \mbE\bigl(f(\Wdeq) \giv M_i=d\bigr) 
           \Eq \mbE[\Wdeq] \sum_{i=1}^n \frac{1}{n} \mbE \bigl( f(\Wdeq) \giv M_i =d \bigr).
\end{equation}
This identity gives the following standard method for coupling $\Wdeq^s$ to~$\Wdeq$, by conditioning on the occupants 
and locations: select a location $I\in [n]$ uniformly at random; then, if $M_I >d$, select $M_I -d$ occupants uniformly 
at random from those at location~$I$, and re-position them uniformly at random in $[n] \setminus I$; 
if $M_I <d$, select $d-M_I$ occupants uniformly at random from those at locations different from~$I$, and re-position 
them at location~$I$. The resulting number of locations with~$d$ occupants is then the corresponding outcome 
of~$\Wdeq^s$; for more details, see \cite[Section~2.3.4]{Chen2011}.

\cite{Bart18} show that it can be advantageous to instead construct a sized biased coupling 
for the number of locations that do {\it not\/} contain precisely $d$ occupants:
\[
     W_d \Def n-\Wdeq \Eq  \sum_{i = 1}^n \I \{M_i \neq d\}.
\]
As for~\eqref{SBid}, we have
\ben{\label{SBidne}
      \mbE[W_d f(W_d)] \Eq \mbE[W_d] \sum_{i=1}^n \frac{1}{n} \mbE \bigl( f(W_d)\giv M_i \not=d \bigr).
}
The strategy, as before, is to choose~$I$ uniformly at random.
Given $I=i$, modify~$M_i$, by adding or removing occupants of~$i$, in such a way that the distribution
of the number of occupants of~$i$ becomes $\mathcal{L}(M_i \giv M_i \neq d)$.  By
symmetry, if occupants added are chosen from the remainder uniformly at random, and if those removed are
re-distributed uniformly at random among the other locations, then the resulting distribution of the number
of locations with other than~$d$ occupants becomes $\mathcal{L}(W_d \giv M_i \neq d)$.
Of course, proving LCLTs for~$W_d$ is equivalent to doing so for~$\Wdeq$, but the advantage of working
with~$W_d$ is that {\it at most one\/} occupant has to be moved to realize the coupling above.
This is the substance of the following lemma, which is \cite[Lemma 2.1]{Bart18}.

\begin{lemma}[Lemma~2.1 of \cite{Bart18}]\label{Gold21}
If $M$ is an integer valued random variable such that $\law(M)$ is log--concave, and if
\begin{equation}\label{pigadef}
   \mbox{$\pi^{(d)}_x = \begin{cases}
            \frac{\mbP(M \geq x+1)\mbP(M=d)}{\mbP(M \geq d+1)\mbP(M=x)}, \quad & \mbox{if  } x \geq d \\
            0 , & \text{otherwise},
     \end{cases}$}  \qquad 
   \mbox{$\gamma^{(d)}_x = \begin{cases}
            \frac{\mbP(M \leq x-1)\mbP(M =d)}{\mbP(M \leq d-1)\mbP(M =x)}, \quad & if x \leq d \\
            0 , & \text{otherwise},
     \end{cases}$}
\end{equation}
then $\pi^{(d)}_x,\gamma^{(d)}_x \in [0,1]$ for all~$x$. Moreover, if $Z_+$ and $Z_-$ are conditionally independent 
given~$M$, with $\mathcal{L}(Z_+ \giv M) = \Be(\pi^{(k)}_{M})$ and $\mathcal{L}(Z_- \giv M) = \Be(\gamma^{(k)}_{M})$; 
if $Z$ is independent of $Z_+$ and~$Z_-$ with $\mathcal{L}(Z)=\Be(q)$, and 
\begin{equation}\label{qpdef}
     q \Def \frac{\mbP(M \geq d+1)}{\mbP(M \neq d)};
\end{equation}
and if
\[
   X \Def Z Z_+ - (1-Z)Z_-;
\]
then $\mathcal{L}(M+X) = \mathcal{L}(M \giv M \neq d)$.
\end{lemma}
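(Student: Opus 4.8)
The plan is to verify the three assertions of the lemma in turn: that $\pi^{(d)}_x,\gamma^{(d)}_x\in[0,1]$, that $q\in[0,1]$, and that $\mathcal{L}(M+X)=\mathcal{L}(M\giv M\neq d)$. For the first, note that by the definition it suffices to consider $x\ge d$ for $\pi^{(d)}_x$ and $x\le d$ for $\gamma^{(d)}_x$, and nonnegativity is clear. For the upper bound, I would use log-concavity of $\law(M)$: the sequence $p_x:=\mbP(M=x)$ satisfies $p_{x}p_{y}\ge p_{x'}p_{y'}$ whenever $\{x,y\}$ and $\{x',y'\}$ are ordered so that the latter pair is ``more spread out''. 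The key consequence is that the ratio $\mbP(M\ge x+1)/\mbP(M=x)$ is non-increasing in $x$ (on the support), so for $x\ge d$ one has $\mbP(M\ge x+1)/\mbP(M=x)\le \mbP(M\ge d+1)/\mbP(M=d)$, which rearranges exactly to $\pi^{(d)}_x\le 1$. The argument for $\gamma^{(d)}_x$ is the mirror image, using that $\mbP(M\le x-1)/\mbP(M=x)$ is non-decreasing in $x$. That $q\in[0,1]$ is immediate since $\mbP(M\ge d+1)\le\mbP(M\neq d)$.

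The heart of the proof is the distributional identity. Since $X=ZZ_+-(1-Z)Z_-$ takes values in $\{-1,0,1\}$, I would compute $\mbP(M+X=x)$ by conditioning on $M$ and decomposing over the three cases for $X$. Writing $r_x:=\mbP(M\neq d)^{-1}\mbP(M\ge d+1)$ for the success probability $q$, and using that $Z,Z_+,Z_-$ are conditionally independent given $M$, we get
\[
\mbP(M+X=x)=\mbP(M=x)(1-\text{move out}_x-\text{move in}_x)+\mbP(M=x-1)q\,\pi^{(d)}_{x-1}+\mbP(M=x+1)(1-q)\gamma^{(d)}_{x+1},
\]
where ``move out$_x$'' $=q\,\pi^{(d)}_x$ (the chance an occupant is added when currently at $x\ge d$, which pushes the value up to $x+1$) and ``move in$_x$'' $=(1-q)\gamma^{(d)}_x$ (an occupant removed when at $x\le d$, pushing down to $x-1$). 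The target is $\mbP(M=x\giv M\neq d)=\mbP(M\neq d)^{-1}\mbP(M=x)\,\I\{x\neq d\}$. I would then substitute the explicit formulas \eqref{pigadef} and \eqref{qpdef}: crucially $q\,\pi^{(d)}_{x-1}\,\mbP(M=x-1)$ telescopes because $\pi^{(d)}_{x-1}\mbP(M=x-1)=\frac{\mbP(M=d)}{\mbP(M\ge d+1)}\mbP(M\ge x)$, so multiplied by $q=\frac{\mbP(M\ge d+1)}{\mbP(M\neq d)}$ it becomes $\frac{\mbP(M=d)}{\mbP(M\neq d)}\mbP(M\ge x)$, and likewise $(1-q)\gamma^{(d)}_{x+1}\mbP(M=x+1)=\frac{\mbP(M=d)}{\mbP(M\neq d)}\mbP(M\le x)$.

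With these substitutions the computation becomes a telescoping/bookkeeping exercise. I would treat three ranges of $x$ separately: for $x>d$, only the ``add'' contributions are active, and $\mbP(M+X=x)=\mbP(M=x)-\frac{\mbP(M=d)}{\mbP(M\neq d)}(\mbP(M\ge x+1)-\mbP(M\ge x))=\mbP(M=x)+\frac{\mbP(M=d)}{\mbP(M\neq d)}\mbP(M=x)=\frac{\mbP(M=x)}{\mbP(M\neq d)}$, as required; for $x<d$ the mirror computation with the ``remove'' contributions gives $\frac{\mbP(M=x)}{\mbP(M\neq d)}$; and for $x=d$ the incoming mass from $d-1$ and $d+1$ must exactly cancel $\mbP(M=d)$ minus the outgoing mass, which one checks using $\mbP(M\ge d)=\mbP(M=d)+\mbP(M\ge d+1)$ and its dual. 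A final consistency check is that the total mass is $1$, which follows since the target is a probability distribution, or equivalently can be read off directly from the fact that the ``add'' and ``remove'' operations conserve mass.

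The main obstacle is not conceptual but the case-by-case verification at and near $x=d$, where one must be careful that $\pi^{(d)}_d$ and $\gamma^{(d)}_d$ are both nonzero (they equal $q^{-1}$ up to the normalising ratios, in fact $q\pi^{(d)}_d=\frac{\mbP(M=d)\mbP(M\ge d+1)}{\mbP(M\neq d)\mbP(M\ge d+1)}\cdot\frac{\mbP(M\ge d+1)}{\mbP(M=d)}$... one must track these carefully), and that the indicator $\I\{x\neq d\}$ in the target is produced precisely by the outgoing mass at $x=d$ being $q\pi^{(d)}_d+(1-q)\gamma^{(d)}_d=1$, so that $\mbP(M+X=d)$ receives no contribution from $\{M=d\}$ itself. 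Verifying $q\pi^{(d)}_d+(1-q)\gamma^{(d)}_d=1$ is the linchpin and follows by direct substitution: $q\pi^{(d)}_d=\frac{\mbP(M\ge d+1)}{\mbP(M\neq d)}$ and $(1-q)\gamma^{(d)}_d=\frac{\mbP(M\le d-1)}{\mbP(M\neq d)}$, which sum to $1$.
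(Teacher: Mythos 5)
Your proposal is correct and follows essentially the same route as the paper's sketch: bound $\pi^{(d)}_x,\gamma^{(d)}_x$ via a log-concavity monotonicity argument (the paper phrases it as a single-step inequality iterated, you phrase it as monotonicity of the ratios $\mbP(M\ge x+1)/\mbP(M=x)$ and $\mbP(M\le x-1)/\mbP(M=x)$ -- these are the same thing), and then verify the distributional identity by conditioning on $M$ and decomposing over $X\in\{-1,0,1\}$, exactly the display the paper gives and says ``can be verified directly.'' Your telescoping substitutions and the $x=d$ linchpin $q\pi^{(d)}_d+(1-q)\gamma^{(d)}_d=1$ are all correct and supply the detail the paper defers to the Bartroff--Goldstein--I\c{s}lak reference.
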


\nin A distribution $\mathcal{L}(M)$ is log--concave when 
\begin{equation}\label{logconc}
    \mbP(M= s-1) \mbP(M = s+1)  \Le \mbP(M  = s)^2, \qquad \text{ for all integers }s;
\end{equation}
binomial distributions, in particular, are log concave.  The theorem shows that~$X$ occupants are to be
moved, and that $X \in \{-1,0,1\}$.

To understand the first assertion in Lemma~\ref{Gold21}, observe that 
$$
\gamma^{(d)}_x \leq 1 \quad \text{ if } \quad \frac{\mbP(M_i \leq y-1)}{\mbP(M_i =y)} 
                           \Le \frac{\mbP(M_i \leq d-1)}{\mbP(M_i = d)}
$$ 
for all $y \leq d-1$, which equivalent to 
$$
     \mbP(M_i \leq y-1)\mbP(M_i = y+1) \Le \mbP(M_i \leq y) \mbP(M_i =y).
$$ 
This, in turn, can be verified through repeated application of~\eqref{logconc}. A similar argument holds for~$\pi_x^{(d)}$.
To understand the second assertion of Lemma~\ref{Gold21}, observe that 
\begin{align*}
  \mbP(M_i + X = j) &\Eq  \mbP(M_i = j ) (1- q \pi^{(d)}_j -(1-q)\gamma^{(d)}_j) + \mbP(M_i = j-1) q \pi^{(d)}_{j-1} \\
   &\qquad \mbox{} + \mbP(M_i = j+1) (1-q) \gamma^{(d)}_{j-1}   \\
   &\Eq \mbP(M_i=j \giv M_i \neq d),
\end{align*}
as can be verified directly. We refer the reader to the proof of \cite[Theorem 2.1]{Bart18} 
for more details.

\ignore{
Lemma  \ref{Gold21} suggests a new method for generating outcomes of $W_d^s$ from a uniform occupancy configuration 
with counts $\{M_i \}_{i \in [n]}$: select a vertex $I$ uniformly at random; given $M_I$ simulate $X$ as suggested 
in Lemma \ref{Gold21}; finally select at most a single occupant uniformly at random to add or remove from location 
$I$ so that location $I$ now has $M_I + X$ occupants; the number of locations that do not contain precisely $d$ 
occupants is then the corresponding outcome of $W_d^s$. 
}

As pointed out in \cite{Bart18}, constructing $W_d^s$ in this manner leads to bounded size biased couplings, that is, 
couplings such that $|W_d^s - W_d| <C$ almost surely (here, for $C=2$). 
%While this helps in the derivation of our 
%results, it is not our primary motivation for choosing to construct a size biased coupling in this way. 
The advantage of this coupling is that we need only move at most a single occupant, and this makes it easier to 
express the quantity~$T$ arising in Theorem~\ref{BRRT}, particularly in Section~\ref{Sec:GGproof}. Moreover, 
using this coupling instead of the standard one reduces the additional logarithmic factors that occur when using Theorem~\ref{BRRT} to prove 
local limit theorems.

\section{Applications}\label{Sec:MainRes}

We present LCLTs with error in two applications:  the number of degree-$d$ vertices in an \ER\ random graph,
and the number of germs with $d$-neighbours in a germ--grain model.

\subsection{\ER\ random graphs}\label{Sec:ERres}

Let $\IG_n$ be the set of simple and undirected graphs with the vertex set $\mathcal{V}=\{v_1,\ldots,v_n\}$. 
We construct an \ER\ random graph $\cG_n$ on $\IG_n$ by letting the indicators $E_{ij}$, which determine the 
presence of an edge between $v_i$ and~$v_j$, be independent Bernoulli random variables with a common success 
probability~$p$ when $i \neq j$, and be zero when $i=j$. 
%An \ER\ random graph contains no loops, that is, $\xi_{ii}\equiv 0$ for all $i \in \{ 1, \dots, n \}$.
In this paper, we consider \emph{sparse\/} \ER\ random graphs, that is, we let $p = \lambda /n$ for some 
constant~$\lambda>0$.

We let $M_i = \sum_{j=1}^n E_{ij}$ be the degree of vertex $V_i$ in~$\cG_n$, and study the total number of 
vertices whose degree is not precisely~$d$:
\begin{equation}\label{ERwd}
    W_d \Def \sum_{i=1}^n \I \{ M_i \neq d \}.
\end{equation}
Using elementary arguments, we obtain 
\[
   \mu_d \Def \mbE W_d \Eq n(1-b_d)\qquad \mbox{and} \qquad 
    \sigma^2 \Def \Var(W_d) \Eq n b_d^2\left[\frac{(d-(n-1)p)^2}{(n-1)p(1-p)}-1\right]+n b_d,
\]
where $b_d:=\binom{n-1}{d}p^{d}(1-p)^{n-d-1}$. Observe that $\mu_d$ and~$\sigma^2$ are both of strict order~$n$.

We are now in a position to state an LCLT for $W_d$. This complements the work of a number of authors, who establish 
central limit theorems for~$W_d$ that apply when $p=\lambda/n$:
\cite{Bar89} prove that~$W_d$ is asymptotically normal and, in addition, obtain bounds in the Wasserstein metric of 
optimal order;
\cite{Kor90} obtains bounds of optimal order~$\bigo(\sigma^{-1})$ in the Kolmogorov metric when $d=0$, 
and \cite{Gol13} obtains bounds of optimal order in the Kolmogorov metric when $d \geq 0$; 
\cite{Fan14} obtains bounds between the distribution of~$W_d$ and an appropriately discretized normal distribution 
in the total variation metric that are of optimal order~$\bigo(\sigma^{-1})$; finally, \cite{Bar19} prove an 
LCLT with bounds of order~$\bigo(\sigma^{-2} \log(\sigma)^{1/2})$, but only for the case $d=0$.

\begin{theorem}\label{main}
For an \ER\ random graph with $p=\lambda/n$, if~$W_d$ is given by \eqref{ERwd} and $\sigma^2 := \Var(W_d)$, then
as $n\to\infty$, for any $d \geq 0$,
\ba{
  \dtv\bclr{\law(W_d), \TP(\mu_d,\sigma_d^2)}&\Eq \bigo\bclr{1/\sigma}; \\
  \dloc\bclr{\law(W_d), \TP(\mu_d,\sigma_d^2)}&\Eq \bigo\bbbclr{\frac{(\log\sigma)^{5/2}}{\sigma^2}}.
}
\end{theorem}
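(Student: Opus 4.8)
The plan is to apply Theorem~\ref{BRRT} with $W=W_d$, using the bounded size biased coupling of Lemma~\ref{Gold21} applied coordinatewise: select $I\in[n]$ uniformly at random, and given $I=i$ modify the degree $M_i$ by adding or removing at most one incident edge (re-routing its other endpoint uniformly) so that the resulting degree has law $\law(M_i\giv M_i\neq d)$. This yields $(W_d,W_d^s,G)=(W_d,W_d^s,\mu_d)$ with $D=W_d^s-W_d$ satisfying $|D|\le 2$ (at most one edge is moved, affecting at most two vertices), and $R=0$. I would take $\cF_2$ to be the $\sigma$-algebra generated by the randomness in the coupling \emph{other than} the configuration of edges not incident to the chosen vertex $I$ and its re-routing partner --- i.e.\ freeze the identity of $I$, the value $X\in\{-1,0,1\}$, and the endpoints being moved, but leave the rest of the graph random --- and $\cF_1=\sigma(\cG_n)$, the full graph.

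The three quantities to control are $\Upsilon$, $\norm{T}_2$, and $\norm{T}_q$ with $q=\lceil\log\sigma\rceil$. For $\Upsilon$: since $|GD(D-1)|\le \mu_d\cdot 2\cdot 3 = 6\mu_d = \bigo(n)=\bigo(\sigma^2)$ and is supported on the event $D\neq 0,1$ (i.e.\ $D=-1$ or, after relabelling, $D$ taking its extreme value), I need $S_2(\law(W_d\giv\cF_2))=\bigo(\sigma^{-2})$ uniformly. After freezing the $\bigo(1)$-many edges touched by the coupling, $W_d$ conditional on $\cF_2$ is again (up to an additive $\bigo(1)$ correction) a sum of the indicators $\I\{M_j\neq d\}$ over an \ER-type graph on the remaining vertices, so I would bound $S_2$ of this conditional law by the standard techniques for $S_2$ of sums of weakly dependent indicators referenced in Remark~\ref{c_2-dependence} (e.g.\ via a further size biased / exchangeable-pair argument, or the method of \cite{Rol15}), obtaining $S_2=\bigo(1/\sigma^2)$, hence $\Upsilon=\bigo(1)$ and one may take $c_1$ a fixed constant.

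The main work is bounding the moments of $T=|\mbE[GD\giv\cG_n]-\mbE[GD]| = \mu_d|\mbE[D\giv\cG_n]-\mbE D|$. Conditioning on the graph and averaging over the uniform choice of $I$ and the coupling randomness gives $\mbE[D\giv\cG_n]=\frac1n\sum_{i=1}^n \mbE[\text{change in }W_d\text{ from re-routing at }i\giv\cG_n]$, which I expect to decompose as a sum $T=\sum_{l=1}^k T_l$ of finitely many terms, each of the form ``$\mu_d/n$ times a sum over vertices (or pairs/triples of vertices) of a bounded local functional of the degrees minus its mean.'' Each $T_l$ is thus itself a centred sum of weakly dependent bounded variables with $\bigo(\sqrt n)=\bigo(\sigma)$ fluctuations, and I would bound $\norm{T_l}_q$ by exhibiting a Stein (size biased) coupling for each and invoking Theorem~\ref{Yuting}: this gives $\norm{T_l-\mbE T_l}_q\le\sqrt{2(q-1)\norm{G_l}_q\norm{D_l}_q}=\bigo(\sqrt{q}\cdot\sigma)$, so $\s^{-1}\norm{T}_q=\bigo(\sqrt q)=\bigo((\log\sigma)^{1/2})$ and $\s^{-1}\norm{T}_2=\bigo(1)$. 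Feeding $c_1=\bigo(1)$ and $c_2=\bigo((\log\sigma)^{1/2})$ into \eqref{eq:ezbd1} and \eqref{eq:ezbd} yields $\dtv=\bigo(1/\sigma)$ and $\dloc=\bigo(\sigma^{-2}(\log\sigma)^{1/2}\cdot(\log\sigma))$; the stated exponent $5/2$ presumably comes from one of the $T_l$ requiring an extra iterated application of Theorem~\ref{Yuting} (picking up a further power of $q^{1/2}$) or from the $S_2$-bound contributing a logarithmic factor, so I would track the powers of $q$ carefully through the recursion to land on $(\log\sigma)^{5/2}$. The main obstacle is the combinatorial bookkeeping in writing $\mbE[D\giv\cG_n]$ as an explicit sum of $T_l$'s with the right boundedness and mean-zero structure, together with constructing a clean Stein coupling for each $T_l$; the concentration step itself is then mechanical via Theorem~\ref{Yuting}, and the smoothness step is by established methods.
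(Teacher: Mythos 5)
Your overall architecture matches the paper's: use the Bartroff--Goldstein--I\c{s}lak bounded size biased coupling for $W_d = n - \Wdeq$ (so $|D|\le 2$, $G=\mu_d$, $R=0$), take $\cF_1 = \sigma(\cG_n)$, decompose $\mbE[GD\giv\cG_n]-\mbE[GD]$ into a handful of centred sums $T_l$ of bounded local graph statistics, control each $\norm{T_l}_q$ via the Stein-coupling central moment inequality (Theorem~\ref{Yuting}), and control $\Upsilon$ by bounding $S_2(\law(W_d\giv\cF_2))=\bigo(\sigma^{-2})$ after freezing a finite neighbourhood of the coupling randomness. That is exactly what the paper does (Lemmas~\ref{ERT}, \ref{ERwen}/Corollary~\ref{ERmoments}, and Lemma~\ref{ERUP}).

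The genuine gap is in your moment estimate $\norm{T_l}_q = \bigo(\sqrt{q}\,\sigma)$, and consequently in your attempt to account for the exponent $5/2$. Your bound implicitly assumes $\norm{G_l}_q\,\norm{D_l}_q = \bigo(\sigma^2)$ \emph{uniformly in $q$}, i.e.\ that the coupling difference $D_l$ for the subsidiary Stein coupling has $q$-norms bounded independently of $q$. That fails here. After computing $\mbE[D\giv\cG_n]$ one finds (Lemma~\ref{ERT}) that two of the pieces --- the ones built from $\hW_{d-1}(v_i)$, $\hW_d(v_i)$, $\hW_{d+1}(v_i)$ (numbers of degree-$t$ neighbours of $v_i$) --- are sums of \emph{2-neighbourhood} local statistics. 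For a sum of $r$-neighbourhood statistics over a sparse \ER\ graph, resampling one vertex's incident edges perturbs $\bigo(N_r)$ summands, where $N_r$ is the size of an $r$-neighbourhood; $N_r$ is roughly a depth-$r$ Galton--Watson population, and $\norm{N_r}_q$ grows like $q^r$, not $\bigo(1)$. This is precisely what is encoded in Lemma~\ref{ERwen} (Theorem~4.2 of \cite{Bar18}): for $\beta=0$, $n^{-1/2}\norm{\sum_i X_i - \mbE\sum_i X_i}_q \lesssim [\max\{\lambda,q\}]^{1/2+r}$, which for $r=2$ gives $\sigma^{-1}\norm{T_l}_q = \bigo(q^{5/2})$. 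That is the source of $(\log\sigma)^{5/2}$ in the final bound. Your two proposed explanations --- an iterated application of Theorem~\ref{Yuting}, or a logarithmic loss coming from the $S_2$ smoothness term --- are both wrong: the smoothness term contributes only $\Upsilon = \bigo(1)$, and no iteration of Theorem~\ref{Yuting} is used; the exponent comes purely from the $q$-growth of the moments of neighbourhood sizes in the \ER\ graph, a phenomenon absent from your sketch. (Contrast with the germ--grain case, Lemma~\ref{GGL2}, where neighbourhood sizes are Poisson-like with $\norm{\cdot}_q = \bigo(q)$ regardless of radius, yielding only $q^{3/2}$.)

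A secondary point: your $\cF_2$ (freezing only $I$, $X$, and the moved endpoints) is not large enough for $(G,D)$ to be $\cF_2$-measurable --- $D$ depends on the degrees $M_I$, $M_J$, hence on all edges incident to $I$ and $J$. The paper's $\cF_2 = \sigma(I,A_I,B_I,J,X)$ freezes the full 2-neighbourhood of $I$ (and of $J$), both so that measurability holds and so that, conditionally on $\cF_2$, the remaining graph on $(A_I)^c$ is again an \ER\ graph to which the Markov-chain $S_2$-bound of \cite[Theorem~3.7]{Rol15} applies. You would need to enlarge $\cF_2$ similarly for the $\Upsilon$ step to go through.
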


\subsection{Germ--grain models}\label{Sec:GGm}

To define the germ--grain models that we study, let $C_n:= [0, n^{1/2})^2$ be a torus; for $x,y \in C_n$, let $D(x,y)$ 
denote the distance between $x$ and $y$ under the Euclidean toroidal metric on $C_n$; and, for $x \in C_n$ 
and $s >0$, let $B_s(x)$ denote the ball $\{ y \in C_n: D(x,y) \leq s \}$. 
Let $V_1, \dots, V_n$ be independent points scattered uniformly in~$C_n$.
We refer to points in the set $\mathcal{V} :=\{ V_1, \dots, V_n\}$ as \emph{germs\/}. 
For a fixed value $r >0$, let $B_{i,r}:= B_r(V_i)$ be the $r$-ball that surrounds germ~$i$. We refer to 
these balls as \emph{grains\/}. To avoid small-$n$ boundary effects, we assume that $\pi r^2 < n$. Let 
\begin{equation}\label{SetIdef}
     \mathcal{I}_{i,r} \Def \left\{ j \neq i: V_j \in B_{i,r} \right\}
\end{equation}
be the set of germs that fall in grain~$i$, and $M_i:= \card\{\mathcal{I}_{i,r}\}$ be the number of germs that fall 
in grain~$i$. We study the total number of germs whose grain does not contain precisely~$d$ germs, that is,
\begin{equation}\label{Wggdef}
    W_d \Def \sum_{i=1}^n \I\{ M_i \neq d \}.
\end{equation}
Using \eqref{Wggdef} and the fact that $M_i \sim \text{Bi}(n-1, \pi r^2/n)$, we obtain $\mu_d:=\mbE(W_d)=n(1-b_d)$, 
where 
\[
   b_d \Def {n-1 \choose d} \left( \frac{\pi r^2}{n} \right)^d \left( 1 - \frac{\pi r^2}{n} \right)^{n-1-d}. 
\]
An expression for $\sigma^2:=\Var(W_d)$ is straightforward to derive, but more difficult to analyse asymptotically. 
For our purposes, it is enough to apply  \cite[Theorem~2.1]{Pen01}, which implies that $\sigma^2$ is also of strict 
order $n$. 

We are now in a position to state an LCLT for $W_d$. Our result complements the work of a number of authors:
\cite{Pen01} (see also \cite{Penrose2003}) establish general CLTs for geometric random graphs that apply to~$W_d$, which in that setting corresponds to the number of vertices not having degree~$d$;
\cite{Cha08} gives optimal bounds in the Wasserstein metric;
\cite{Gol10} obtain Berry--Essen bounds of optimal order $\bigo(\sigma^{-1})$ when $d=0$ and, when combined with 
the bounded size-biased couplings described in Section~\ref{sBSBC}, their method extends naturally to $d \geq 0$. 
\cite{Lac17} establish Berry--Essen bounds for functionals of binomial point processes; \cite[Section~6.1]{Penrose2011} 
give an LCLT without rate for~$W_0$ on the scale of its span, though it does not appear that it is established 
that the span is~$1$.
To the best of our knowledge, neither a bound in total variation nor an LCLT has previously been established for~$W_d$, 
when $d\geq 1$.

\begin{theorem}\label{GGlclt}
In the germ--grain model described above, for any fixed $r > 0$, if~$W_d$ is given by~\eqref{Wggdef} and 
$\sigma^2 := \Var(W_d)$, then as $n \to\infty$, for any $d \ge 0$, 
\begin{align*}
  \dtv\left( \mathcal{L}(W_d), TP(\mu, \sigma^2) \right) &= O(1/\sigma); \\
  \dloc \left( \mathcal{L}(W_d), TP(\mu, \sigma^2) \right) &= O\left( \frac{(\log\sigma)^{3/2}}{\sigma^2} \right).
\end{align*}
\end{theorem}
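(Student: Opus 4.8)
The plan is to apply Theorem~\ref{BRRT} with the bounded size biased coupling of \cite{Bart18}. Since $M_i\sim\Bi(n-1,\pi r^2/n)$ is log-concave, Lemma~\ref{Gold21} applies at every location, and we take $(W_d,W_d',G,R)=(W_d,W_d^s,\mu_d,0)$ with $W_d^s$ built as follows: choose $I\in[n]$ uniformly; given $I=i$, draw $X\in\{-1,0,1\}$ as in Lemma~\ref{Gold21}; if $X=1$, pick a germ $J$ uniformly among those with $V_J\notin B_{i,r}$ and move it to a uniform point of $B_{i,r}$; if $X=-1$, pick $J$ uniformly among those with $V_J\in B_{i,r}$ and move it to a uniform point of $C_n\setminus B_{i,r}$; if $X=0$, do nothing; and let $W_d^s$ count the germs whose grain in the resulting configuration does not contain exactly $d$ germs. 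By exchangeability of the germs this is a size biased coupling, and it repositions at most the single germ $J$. Setting $D:=W_d^s-W_d$, we get $D=\sum_k\bclr{\I\{\tilde M_k\neq d\}-\I\{M_k\neq d\}}$, where $\tilde M_k$ is the new degree of germ $k$ and the sum runs over the (random, stochastically $\bigo(1)$-sized) set of germs whose degree changes, namely $J$ together with the germs lying within distance $r$ of the old or of the new position of $V_J$ (this set contains $i$), each summand lying in $\{-1,0,1\}$. In particular $D$ is not a.s.\ bounded, but by sparsity $\norm{D}_q=\bigo(q)$ uniformly in $n$, while $G=\mu_d=n(1-b_d)$ is of order $\sigma^2\asymp n$.

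For the $\Upsilon$ hypothesis, let $\mathcal{F}_2$ be generated by $I$, the coupling randomness ($X$, the label $J$, and the new position of $V_J$), and the positions of all germs within distance $2r$ of the old or of the new position of $V_J$; this reveals only finitely many germs and renders $D$, hence $(G,D)$, measurable with respect to $\mathcal{F}_2$. Conditionally on $\mathcal{F}_2$ the remaining $\approx n$ germs are still i.i.d.\ uniform on $C_n$ minus the revealed region, so $W_d$ equals a known constant plus a local statistic of a binomial process of $\approx n$ points; exhibiting $\asymp n$ conditionally independent ``toggles'' of well-separated germs, each flipping exactly one indicator, and invoking a Landau--Kolmogorov/Stein estimate for $S_2$ as in \cite{Rol15}, \cite{Mat07} and \cite[Ch.~II.14]{Lin02} — the germ--grain counterpart of the smoothness lemma used for the \ER\ model — gives $S_2\bclr{\law(W_d\giv\mathcal{F}_2)}=\bigo(\sigma^{-2})$ uniformly over the conditioning. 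Combined with $\norm{GD(D-1)}_2=\bigo(\sigma^2)$ (Cauchy--Schwarz and the moment bound on $D$) this yields $\Upsilon=\bigo(1)$, hence $\Upsilon+1\le c_1$ for a constant $c_1$; and $\norm{R}_2=0$.

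It remains to control $T=\mu_d\,\babs{\mbE[D\giv\mathcal{F}_1]-\mbE D}$ with $\mathcal{F}_1:=\sigma(V_1,\dots,V_n)$. The first step is an explicit formula for $\mbE[D\giv\mathcal{F}_1]=\tfrac1n\sum_i\mbE[D\giv\mathcal{F}_1,I=i]$, exhibiting it — using crucially that only the single germ $J$ is moved — as $\tfrac1n$ times a sum $S$ of local functionals of the germ configuration (each the value, or a uniform-over-$C_n\setminus B_{i,r}$ average of values, of such a functional). Since $\mu_d/n\asymp1$ we have $\norm{T}_q\asymp\norm{S-\mbE S}_q$, and, writing $S=\sum_l S_l$ with each $S_l$ a sum of local statistics admitting its own size biased coupling of bounded (after truncation, logarithmic) influence, we bound $\norm{S_l-\mbE S_l}_q$ by Theorem~\ref{Yuting}, obtaining $\norm{S-\mbE S}_q=\bigo(\sqrt q\,\sigma)$ up to logarithmic factors. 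Hence $\sigma^{-1}\norm{T}_2=\bigo(1)$ may be absorbed into $c_1$, and $c_2:=\sigma^{-1}\norm{T}_q=\bigo\bclr{(\log\sigma)^{3/2}}$ at $q=\ceil{\log\sigma}$; then \eqref{c-condition} holds for all $n$ large, and Theorem~\ref{BRRT} delivers $\dtv\bclr{\law(W_d),\TP(\mu_d,\sigma^2)}=\bigo(\sigma^{-1})$ and $\dloc\bclr{\law(W_d),\TP(\mu_d,\sigma^2)}=\bigo\bclr{\sigma^{-2}(\log\sigma)^{3/2}}$, as claimed.

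The main obstacle is this last step: finding a usable closed form for $\mbE[D\giv\mathcal{F}_1]$ — the germ--grain instance of the complicated quantity alluded to in the introduction — and then controlling the high moments of the resulting strongly dependent sum $S$. The delicate feature is that $D$, and hence the summands of $S$, are not almost surely bounded: they are driven by local germ counts with Poisson-type tails, which forces a truncation at scale $\asymp\log n$ before Theorem~\ref{Yuting} can be applied cleanly, and it is this truncation together with the $\sqrt q$ cost of the moment inequality at $q\asymp\log\sigma$ that produces the logarithmic factors in the local bound. A secondary, but still nontrivial, point is verifying the uniform smoothness estimate $S_2(\law(W_d\giv\mathcal{F}_2))=\bigo(\sigma^{-2})$, i.e.\ checking that freezing the finitely many revealed germ positions leaves enough conditionally independent randomness to force the requisite spread of $\law(W_d\giv\mathcal{F}_2)$.
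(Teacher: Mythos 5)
Your high-level plan coincides with the paper's: use the bounded size-biased coupling from \cite{Bart18}, apply Theorem~\ref{BRRT}, bound $\Upsilon$ via a conditional smoothness estimate, and control the moments of $T$ via Theorem~\ref{Yuting}. However, there is a genuine gap in your analysis of $D := W_d^s - W_d$. You assert that ``$D$ is not a.s.\ bounded'' and that only $\norm{D}_q=\bigo(q)$ holds ``by sparsity'', and you then locate the eventual $(\log\sigma)^{3/2}$ in a truncation of $D$ at scale $\asymp\log n$. In fact $|D|\le K$ almost surely for a constant $K=K(r,d)$: although the \emph{count} of germs within distance~$r$ of the old or new position of~$V_J$ is Poisson-tailed, only germs whose degree lies in $\{d-1,d,d+1\}$ can flip their indicator, and the packing argument of Lemma~\ref{GGL3} (built on the self-referential identity~\eqref{OverEquiv}) shows that the number of degree-$u$ germs within distance~$sr$ of any fixed point is deterministically at most $\kappa_{2s+1}(u+1)$. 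This almost-sure boundedness is the structural reason the bounded size-biased coupling is used at all; it makes each of $S_j$, $H_i$, $Q_{ij}$, $E_{ij}$ in the decomposition of $\mbE[D\giv\cV]$ uniformly bounded, and no truncation of $D$ enters the argument anywhere.

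The error ripples through the rest. For $\Upsilon$, the a.s.\ bound $|GD(D-1)|\le\mu_d K(K+1)=\bigo(n)$ reduces the problem to showing $\mbE\bigl[S_2\bigl(\law(W_d\giv\cF_2)\bigr)\bigr]=\bigo(n^{-1})$; your Cauchy--Schwarz step with a heavy-tailed $D$ is both unnecessary and, were the tails of $D$ really Poisson-type, would not yield the stated $\bigo(\sigma^{-1})$ total-variation bound. Moreover $S_2(\law(W_d\giv\cF_2))=\bigo(\sigma^{-2})$ is not uniform over the conditioning, since the revealed region may contain arbitrarily many germs; the paper (Lemma~\ref{GGL5}) splits on the high-probability event $N^{\cF_2}\le\sqrt n$, establishes the smoothness bound only there via the rectangle/good-event construction, and discards the complement by Chernoff. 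Finally, the $(\log\sigma)^{3/2}$ is not a truncation artefact: it is exactly the $q^{3/2}$ in Lemma~\ref{GGL2}, namely a $\sqrt q$ from Theorem~\ref{Yuting} multiplied by a factor $q$ arising from the binomial neighbourhood-count moments inside $\norm{\tD}_q$, evaluated at $q=\ceil{\log\sigma}$. Your ``$\bigo(\sqrt q\,\sigma)$ up to logarithmic factors'' has the wrong exponent of~$q$ and misattributes where the logarithmic cost actually accrues.
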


\section{Proofs of the applications}\label{Sec:Proofs}

%%%%%%%%%%%%%%%%%%%%%%%%%%%%%%%%%%%%%%%%%%%%%%%%%%%%

%Let $M_i$ be the degree of vertex~$v_i$ in $\cG_n$ and set $\tW_k=\sum_{i=1}^n \I\{M_i=k\}$
%so that $\tW=\tW_d$.

We now prove Theorem \ref{main} (Section \ref{Sec:ERproof}) and Theorem~\ref{GGlclt} (Section~\ref{Sec:GGproof}).
In each section, we split the proof into lemmas; we note that, with the exception of Lemma~\ref{GGL3} 
(where the analogue in for \ER\ random graphs is trivial) the Lemmas in Sections \ref{Sec:ERproof} and~\ref{Sec:GGproof}
are in one-to-one correspondence.

\subsection{\ER\ random graphs}\label{Sec:ERproof}

To prove Theorem~\ref{main}, we first construct a size biased coupling.  To do so,
%To obtain a size biased coupling we construct a new graph $(\mathcal{V}, \mathcal{E}^s)$ from $(\mathcal{V}, \mathcal{E})$. 
we define a new random graph~$\cG^s_n$ with vertex set $\mathcal{V}=\{ v_i \}_{i \in [n]}$ and edge indicators 
$\{E^s_{ij}\}_{i,j \in [n], i \neq j}$, in the following way.
We let $I$ be distributed uniformly on~$[n]$, independently of everything else. 
Given $I=i$ and~$M_i$, we let $Z$, $Z_+$, and $Z_-$ be independent Bernoulli random variables with means $q$, 
$\pi^{(d)}_{M_i}$ and~$\gamma^{(d)}_{M_i}$, where the expressions for $q$, $\pi^{(d)}_{M_i}$ and~$\gamma^{(d)}_{M_i}$
are as in Lemma~\ref{Gold21} ; we then let $X := Z Z_+ - (1-Z)Z_-$. 
If $I=i$ and $X=0$, we set $E^s_{lj}=E_{lj}$ for all $l,j\in [n]$; if $I=i$ and $X=1$, we let $J$ be uniformly distributed 
on $\{ j \in [n] : E_{ij}=0 \}$, conditionally independent of everything else, given $I=i$ 
and $(E_{ij},\,j \in [n]\setminus\{i\})$,
and we set $E^s_{iJ}=1$ and $E^s_{lj}=E_{lj}$ for all other pairs $l,j$; 
finally, if $I=i$ and $X=-1$, we let $J$ be uniformly distributed on $\{ j \in [n] : E_{ij}=1 \}$,
conditionally independent of everything else, given $I=i$ 
and $(E_{ij},\,j \in [n]\setminus\{i\})$, and we set $E^s_{iJ}=0$ and $E^s_{lj}=E_{lj}$ for all other pairs $l,j$. 
If we let~$W^s_d$ denote the number of vertices in~$\cG^s_n$ with degree different from~$d$, then~$W^s_d$ has the size 
biased distribution of~$W_d$, and $(W_d, W^s_d,\mu_d)$ is a Stein coupling.

The first lemma provides a useful expression for the quantity~$T$ that arises in Theorem~\ref{BRRT}, 
when we apply it to the Stein coupling $(W_d, W^s_d, \mu)$,
in terms of {\it local\/} statistics.
Fix $r \in \mbN$, and, for $G \in \IG_n$ and each $i =1, \dots, n$, let~$\mathcal{N}_r(i,G)$ be the 
`$r$-neighbourhood' consisting of the vertex--labelled subgraph induced by vertices at graph distance at most~$r$ from vertex~$i$.
Observe that~$M_i$ is a function of~$\mathcal{N}_1(i, \cG_n)$, and that
\[
   \hW_t(v_i) \Def \sum_{j=1}^n E_{ij}\I\{ M_j =t\},
\]
the number of degree~$t$ vertices connected to~$v_i$, is a function 
of~$\mathcal{N}_2(i,\cG_n)$; we refer to such statistics as local.
Lemma~\ref{ERT} shows that the quantity~$T$ that arises in Theorem~\ref{BRRT} can be bounded
in terms of a sum of centred sums of bounded local statistics.

\begin{lemma}\label{ERT}
We have
\[
    |\mbE[GD \giv \mathcal{G}_n]-\sigma^2| \Le (1-b_d)\sum_{l=1}^6 T_l,
\]
where $T_l = |T_l' - \ex T_l'|$, $1\le l\le 6$, and %{\footnotesize
\begin{align*}
  T_1' &\Eq \frac{q}{n}\, U W_{d-1}; \qquad
         %- \mbE\left( W_{d-1} \sum_{i=1}^n  \frac{n  \pi_{M_i}^{(d)}}{n-M_i-1} \I \{M_i < n/2\} \right)  \right| \\
  T_2' \Eq \frac{q}{n} U W_{d};\qquad T_6' \Eq W_d;\\
    % - \mbE\left( W_{d} \sum_{i=1}^n \frac{n  \pi_{M_i}^{(d)} }{n-M_i-1} \I \{M_i < n/2\} \right)  \right| \\
  T_3' &\Eq q  \sum_{i=1}^n \I [M_i < n/2]\, 
      \frac{ (\hW_{d-1}(v_i)- \hW_{d}(v_i)+\I[M_i=d-1]-\I[M_i=d]) \pi_{M_i}^{(d)}  }{n-M_i-1}\,; \\
  T_4' &\Eq q \sum_{i=1}^n \I[M_i  \geq n/2]\, 
           \frac{ \pi_{M_i}^{(d)}\{W_{d-1}+\hW_{d-1}(v_i)+\I[M_i=d-1] - W_d - \hW_d(v_i)-\I[M_i=d]\}}{n-M_i-1} \,; \\
    %&\qquad \qquad - \left. \mbE\left(  \sum_{i=1}^n \I \{M_i  \geq n/2\} 
    %\frac{ \pi_{M_i}^{(d)}[W_{d-1}+W_{d-1}(v_i)+\I\{M_i=d-1\} - W_d - W_d(v_i)-\I\{M_i=d\}]}{n-M_i-1}\right)  \right| \\
  T_5' &\Eq (1-q)\sum_{i=1}^n \,\frac{ (\hW_{d+1}(v_i)-\hW_d(v_i)) \gamma_{M_i}^{(d)}  }{M_i}\,,  
\end{align*}
where
\[
     U \Def \sum_{i=1}^n \I [M_i < n/2]\, \frac{n  \pi_{M_i}^{(d)}  }{n-M_i-1}\,.
\]
%}
\end{lemma}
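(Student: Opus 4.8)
\textbf{Proof proposal for Lemma~\ref{ERT}.}

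The plan is to compute $\mbE[GD \giv \cG_n]$ explicitly using the structure of the size biased coupling. Since $(W_d, W_d^s, \mu_d)$ is a Stein coupling with $G = \mu_d = n(1-b_d)$, we have $GD = \mu_d(W_d^s - W_d)$, so it suffices to understand $\mbE[W_d^s - W_d \giv \cG_n]$. Conditionally on $\cG_n$, the only remaining randomness in the construction of $\cG_n^s$ is the choice of $I$ (uniform on $[n]$), the Bernoullis $Z, Z_+, Z_-$ (whose parameters depend only on $M_I$), and the choice of the endpoint $J$. I would first condition further on $I = i$ and expand $\mbE[W_d^s - W_d \giv \cG_n, I=i]$ according to the three cases $X \in \{0, \pm 1\}$. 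When $X=0$ nothing changes; when $X = 1$ an edge $\{v_i, v_J\}$ is added with $v_J$ uniform among the $n - M_i - 1$ non-neighbours of $v_i$ (excluding $v_i$ itself), changing the degrees of $v_i$ and $v_J$ each by $+1$; when $X = -1$ an edge is deleted, with $v_J$ uniform among the $M_i$ neighbours of $v_i$. The change in $W_d$ is then a sum of indicator differences: $v_i$'s contribution changes as $\I[M_i + 1 \neq d] - \I[M_i \neq d]$ (resp.\ $\I[M_i - 1 \ne d] - \I[M_i \ne d]$), and $v_J$'s contribution changes as $\I[M_J + 1 \ne d] - \I[M_J \ne d]$ (resp.\ similarly). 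Averaging the $v_J$ term over the uniform choice of $J$ produces exactly the ratios $\hW_{d-1}(v_i) - \hW_d(v_i)$ over $n - M_i - 1$ (for addition: adding an edge to a degree-$(d-1)$ vertex makes it degree $d$, etc.) and $\hW_{d+1}(v_i) - \hW_d(v_i)$ over $M_i$ (for deletion), with the $+\I[M_i = d \pm 1]$ corrections accounting for whether $v_i$ itself is counted among its own neighbours' relevant degree classes — i.e.\ the bookkeeping of the edge just modified.

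Next I would multiply by the conditional probabilities: given $\cG_n$ and $I = i$, $\mbP(X = 1 \giv \cG_n, I=i) = q\pi_{M_i}^{(d)}$ and $\mbP(X=-1 \giv \cG_n, I=i) = (1-q)\gamma_{M_i}^{(d)}$ (using $Z, Z_+, Z_-$ independent). Then average over $I$ uniform, i.e.\ apply $\frac1n \sum_{i=1}^n$. After multiplying through by $\mu_d = n(1-b_d)$, the factor $n$ cancels the $1/n$ from the uniform choice of $I$, which is why the $T_l'$ appear with an overall $(1-b_d)$ and the $U$, $W_d$ terms have their stated normalisations. The split into $T_3', T_4'$ according to $\I[M_i < n/2]$ versus $\I[M_i \ge n/2]$ is a technical device: when $M_i$ is small the denominator $n - M_i - 1$ is of order $n$, so the local statistic $\hW_{d-1}(v_i) - \hW_d(v_i)$ (bounded by $M_i$) divided by it is genuinely "local" and small; when $M_i \ge n/2$ the term is handled more crudely by bounding $\hW_t(v_i)$ via $W_t$ plus the full neighbourhood count, which is why $T_4'$ carries the clumsier expression involving $W_{d-1} + \hW_{d-1}(v_i) + \cdots$. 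The terms $T_1', T_2'$ (involving $U$) arise from collecting the "$v_i$ itself" contributions $\I[M_i + 1 \ne d] - \I[M_i \ne d] = \I[M_i = d] - \I[M_i = d-1]$ over the addition case, weighted by $q\pi_{M_i}^{(d)}$ and normalised, reorganised so that $W_{d-1}$ and $W_d$ factor out against the common weight $U$; the factor $U$ absorbs $\sum_i \I[M_i < n/2] n\pi_{M_i}^{(d)}/(n-M_i-1)$, and $T_6' = W_d$ picks up the residual $\mbE[W_d^s - W_d]$-type piece that makes the whole expression centred. Finally, since $\mbE[GD \giv \cG_n] = \sigma^2$ has a known mean (indeed $\mbE[GD] = \sigma^2$ by the Stein coupling identity applied to $f(w) = w$), subtracting $\sigma^2$ is the same as centring, so $|\mbE[GD \giv \cG_n] - \sigma^2|$ equals $(1-b_d)$ times $|\sum_l (T_l' - \mbE T_l')|$, and the triangle inequality gives the claimed bound with $T_l = |T_l' - \mbE T_l'|$.

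The main obstacle I expect is purely the bookkeeping: correctly tracking which vertex is counted where when an edge is added or removed, in particular the self-correction terms $\I[M_i = d-1]$, $\I[M_i = d]$ that distinguish $\hW_t(v_i)$ (a sum over $v_i$'s current neighbours) from what one wants after the modification (where the modified edge's other endpoint must or must not be included). A careful way to organise this is to write $W_d = \sum_{j} \I[M_j \ne d]$, note that modifying the edge $\{v_i, v_J\}$ changes exactly the two summands $j = i$ and $j = J$, and handle these two separately: the $j=i$ term is deterministic given $M_i$ and the sign of the modification, while the $j = J$ term requires averaging over the uniform endpoint and is where the $\hW$ statistics and their self-corrections enter. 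The other, more routine, obstacle is verifying that the reorganisation into the specific combination $T_1', \dots, T_6'$ is algebraically exact — this is just careful collection of terms with common weights, and I would do it by writing out $\mbE[W_d^s - W_d \giv \cG_n]$ as one displayed expression and then grouping. No substantial analytic input is needed here; the analysis (bounding $\|T_l\|_q$ via Theorem~\ref{Yuting}, which is what makes these local statistics useful) is deferred to the subsequent lemmas.
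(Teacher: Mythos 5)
Your high-level plan matches the paper's proof exactly: condition on $\cG_n$ and $I=i$, expand $\mbE[W_d^s - W_d \mid \cG_n, I=i]$ according to the sign of $X$, average over the uniform endpoint $J$, multiply by $G = \mu_d$ and average over $I$, then centre using $\mbE[GD] = \sigma^2$ and apply the triangle inequality. However, your account of which parts of the expansion produce $T_1'$, $T_2'$ and $T_6'$ is wrong, and if you actually executed the grouping you describe you would not recover them.

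The source of the confusion: averaging the $v_J$-contribution $\I[M_J = d] - \I[M_J = d-1]$ over $J$ uniform on the $n-M_i-1$ non-neighbours of $v_i$ (excluding $v_i$) does \emph{not} produce only the local quantity $\bigl(\hW_{d-1}(v_i)-\hW_d(v_i)\bigr)/(n-M_i-1)$. The number of degree-$d$ non-neighbours of $v_i$ other than $v_i$ is
\[
(n - W_d) - \hW_d(v_i) - \I[M_i = d],
\]
so the $J$-average is
\[
\frac{(n-W_d) - \hW_d(v_i) - \I[M_i = d] - (n-W_{d-1}) + \hW_{d-1}(v_i) + \I[M_i = d-1]}{\,n - M_i - 1\,},
\]
and the global counts $W_d, W_{d-1}$ enter here, through the $J$-averaging — not through the $v_i$-own contribution. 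The corrections $\pm\I[M_i = d]$, $\pm\I[M_i = d-1]$ are there because $v_i$ is not an eligible $J$, not because $v_i$ is ``counted among its own neighbours''. It is the $(W_{d-1} - W_d)$ part of this display, restricted to $\{M_i < n/2\}$ and grouped against the common weight $n\pi^{(d)}_{M_i}/(n-M_i-1)$, that produces $T_1' - T_2' = \tfrac{q}{n}U(W_{d-1} - W_d)$; the remaining local part (together with the $\{M_i\ge n/2\}$ piece) is $T_3'$ and $T_4'$.

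Correspondingly, the $v_i$-own contribution is much simpler than you suggest. For $X=1$ it is $\I[M_i = d] - \I[M_i = d-1]$, but $\pi^{(d)}_{M_i}\I[M_i=d-1] = 0$ since $\pi^{(d)}_x = 0$ for $x<d$; for $X=-1$ it is $\I[M_i=d]-\I[M_i=d+1]$, and $\gamma^{(d)}_{M_i}\I[M_i=d+1]=0$. Since $\pi^{(d)}_d = \gamma^{(d)}_d = 1$, the two cases combine, weighted by $q$ and $1-q$, to exactly $\I[M_i=d]$, which sums to $n - W_d$; centring removes the constant $n$ and yields $T_6' = W_d$. This is the origin of $T_6'$, not a ``residual piece''. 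With these corrections the derivation coincides with the paper's.
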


\begin{proof}
By considering the degree of the vertex~$I$ chosen and which of its neighbours gain or lose an edge, 
we obtain
\begin{align*}
  \mbE[ GD \giv \cG_n] &\Eq  (1-b_d) \sum_{i=1}^n \biggl(  \I\{M_i =d \} 
                 + \frac{(1-q) \gamma_{M_i}^{(d)}}{M_i}\, [\hW_{d}(v_i)- \hW_{d+1}(v_i) ] \phantom{XXXXX}\\
     &\qquad\quad\mbox{} + \frac{q\pi_{M_i}^{(d)}}{n-M_i-1}\, \biggl[-(n-W_{d-1}) + \hW_{d-1}(v_i)+\I[M_i=d-1] \\ 
     &\qquad\qquad\qquad\qquad\qquad\qquad\mbox{}        + (n- W_d) - \hW_d(v_i)-\I[M_i=d]\biggr] \biggr);
\end{align*}
note that, if $M_i = 0$ or $M_i=n-1$, so that the denominator in one of the fractions is zero, the numerator
is zero also, and the corresponding term is to be taken as zero.
The lemma then follows by observing that $\sigma^2=\mbE[GD]$, by rearranging the terms, and by applying the 
triangle inequality. 
\end{proof}

Note that $T_3'$, $T_5'$, $W_d$ and~$W_{d-1}$ are sums of local statistics that are bounded by~$1$, that~$U$
is a sum of local statistics bounded by~$4$, if $n\ge2$, and that~$T_4'$ is at most~$n$ times
the number of~$i$ such that $M_i > n/2$, whose expectation is very small.  These observations help in controlling
the norms of $\s^{-1}T_i$, $1\le i\le 6$, as required when applying Theorem~\ref{BRRT}, because
Theorem~\ref{Yuting} can be invoked.  For this example, the result of doing so has already been established as
 \cite[Theorem 4.2]{Bar18}, and is therefore stated here without proof.

\begin{lemma}\label{ERwen}
%Let $\mathrm{U}$ be a real valued function on all 
%vertex--labelled graphs with at most~$n$ vertices and distinguished vertex.
Let $\mathrm{U}$ be a real valued function on all vertex--labelled graphs with one distinguished vertex~$v$, and at most $n-1$ other vertices.
Suppose that there exists constants~$c$ and $\beta\geq 0$ such that 
\[
    |\mathrm{U}(G,v)| \Le c \,\card\{V(G)\}^\beta,
\]
for every $G$ in the domain of $\mathrm{U}$.
Fix $\lambda >0$ and let $\cG_n$ be an \ER\ random graph on $\IG_n$, with edge probability $p := \lambda/n$; define 
\[
   X_i \Def \mathrm{U}\bclr{\mathcal{N}_r(i,\cG_n),i}
\]
and $W= \sum_{i=1}^n X_i$. Then, for any $q \ge 1$, 
\[
  n^{-1/2} \norm{W- \mbE W}_q \Le c K(\b)[C_A \max\{\lambda, q(1+\beta) \}]^{1/2+r +2r\beta},
\]
where  $K(\b) := \sqrt{2}(10^{1+\beta} + 2^{1+\beta})$ and $C_A := \pi e^{e-2}/\log(e-1)$.
\end{lemma}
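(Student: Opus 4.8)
The plan is to deduce Lemma~\ref{ERwen} directly from \cite[Theorem~4.2]{Bar18}, whose statement it essentially restates, so the work is one of bookkeeping rather than new ideas. First I would set up the size biased coupling machinery for $W=\sum_{i=1}^n X_i$ with $X_i = \mathrm{U}(\mathcal{N}_r(i,\cG_n),i)$, or more precisely for a centred, truncated, or monotone rearrangement of $W$ so that the hypotheses of Theorem~\ref{Yuting} are met; the key point is that $\cG_n$ admits a size biased coupling $\cG_n^s$ obtained by re-sampling a single (uniformly chosen) potential edge, which changes at most $O(1)$ edges, so that $G$ is of order $\sigma^2\asymp n$ and the increment $W'-W$ is controlled by the number of vertices within distance $r$ of the altered edge whose statistic changes. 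Since $|\mathrm{U}(G,v)| \le c\,\card\{V(G)\}^\beta$, each such increment is at most $c$ times a power of the local cluster size, and the number of affected vertices is bounded by the size of an $r$-neighbourhood in $\cG_n$.

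Next I would quote the branching-process / subcritical-cluster tail bounds (the auxiliary results of Section~\ref{sec:branchbinom}, or the bounds used inside \cite{Bar18}) to control the $q$-th moments of the local neighbourhood sizes $\card\{V(\mathcal{N}_r(i,\cG_n))\}$: in a sparse \ER\ graph with $p=\lambda/n$, the exploration of an $r$-ball is dominated by a Poisson($\lambda$)-type branching process truncated at depth $r$, whose size has moments growing like $[C\max\{\lambda,q\}]^{r}$, and raising to the power $\beta$ (and accounting for the two endpoints of the resampled edge, contributing the extra factor $2r\beta$ in the exponent) yields the claimed form $[C_A\max\{\lambda,q(1+\beta)\}]^{1/2+r+2r\beta}$. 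Then I would feed $\norm{G}_q$ (of order $n$, up to a factor polynomial in $q$) and $\norm{W'-W}_q$ (of order $c\,[C_A\max\{\lambda,q(1+\beta)\}]^{\text{exponent}}$, independent of $n$) into the inequality $\norm{W-\mathbb{E}W}_q \le \sqrt{2(q-1)\norm{G}_q\norm{W'-W}_q}$ of Theorem~\ref{Yuting}, collect the $n^{1/2}$, and absorb the numerical constants into $K(\beta)$ and $C_A$.

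The main obstacle I expect is the careful tracking of the dependence on $q$ and $\beta$ in the exponent: one must argue that resampling one edge affects vertices only within the union of two $r$-balls, bound the $q$-th moment of the $\beta$-th power of the total size of these balls uniformly in $n$, and verify that the resulting exponent collapses exactly to $1/2+r+2r\beta$ rather than something slightly larger; getting the constant $C_A=\pi e^{e-2}/\log(e-1)$ and $K(\beta)=\sqrt{2}(10^{1+\beta}+2^{1+\beta})$ to come out is a matter of matching the bookkeeping in \cite{Bar18} exactly. Since the excerpt explicitly states that ``the result of doing so has already been established as \cite[Theorem~4.2]{Bar18}, and is therefore stated here without proof,'' the honest proof is simply the citation: Lemma~\ref{ERwen} \emph{is} \cite[Theorem~4.2]{Bar18}, so I would record that reference and, if desired, sketch the coupling-plus-branching-bound argument above as motivation rather than reprove it.

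\begin{proof}
This is \cite[Theorem~4.2]{Bar18}; we briefly recall the structure of the argument. One constructs a size biased coupling $(\cG_n,\cG_n^s)$ of the \ER\ graph by choosing an ordered pair of distinct vertices uniformly at random and resampling the indicator of the corresponding edge, so that $\cG_n$ and $\cG_n^s$ differ in at most one edge. Writing $W=\sum_{i=1}^n X_i$ with $X_i=\mathrm{U}(\mathcal{N}_r(i,\cG_n),i)$, the induced coupling $(W,W^s,\mu)$ with $\mu=\mathbb{E}W$ is a Stein coupling with $G=\mu$, and $W^s-W$ only involves those vertices~$i$ whose $r$-neighbourhood contains (one of) the endpoints of the resampled edge. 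The bound $|\mathrm{U}(G,v)|\le c\,\card\{V(G)\}^\beta$ then gives $|W^s-W|\le c\,N^{\,1+\beta}$, where $N$ is the total number of vertices within graph distance~$r$ of the two endpoints. Since $p=\lambda/n$, the exploration of such neighbourhoods is stochastically dominated by a depth-$r$ branching process with offspring distribution $\Bi(n-1,\lambda/n)$, so that, by the moment bounds for subcritical branching processes recorded in Section~\ref{sec:branchbinom}, $\norm{N}_{q(1+\beta)}\le [C_A\max\{\lambda,q(1+\beta)\}]^{r}$ for a suitable absolute constant $C_A$. Hence $\norm{W^s-W}_q\le c\,(10^{1+\beta}+2^{1+\beta})[C_A\max\{\lambda,q(1+\beta)\}]^{2r(1+\beta)}$ after accounting for the two endpoints. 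Combining this with $\norm{G}_q=\mu=O(n)$ (indeed $\mu\le c\,n\,[C_A\lambda]^{\beta r}$ up to constants absorbed below) in the inequality of Theorem~\ref{Yuting}, namely $\norm{W-\mu}_q\le\sqrt{2(q-1)\norm{G}_q\norm{W^s-W}_q}$, and extracting $\sqrt{n}$, yields
\[
  n^{-1/2}\norm{W-\mathbb{E}W}_q \Le c\,K(\beta)\,[C_A\max\{\lambda,q(1+\beta)\}]^{1/2+r+2r\beta},
\]
with $K(\beta)=\sqrt{2}(10^{1+\beta}+2^{1+\beta})$, as claimed; the details are carried out in \cite[Section~4]{Bar18}.
\end{proof}
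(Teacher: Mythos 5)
Your conclusion is right: the paper itself states Lemma~\ref{ERwen} ``without proof'' and points to \cite[Theorem~4.2]{Bar18}, so citing that result is exactly what the paper does, and your decision to treat the proof as a citation is the correct reading.

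However, the sketch you append misdescribes the coupling used in \cite{Bar18}. You describe a ``size biased coupling $(\cG_n,\cG_n^s)$'' obtained by resampling a single uniformly chosen edge and claim this gives a Stein coupling with $G=\mu$ constant. That cannot be right as stated: a size biased coupling with $G=\mu$ requires $W\ge 0$, but here $W=\sum_i X_i$ with $X_i=\mathrm{U}(\mathcal{N}_r(i,\cG_n),i)$ a general signed bounded local statistic. Moreover, resampling a single edge is an exchangeable-pair construction, not a size biasing one. The coupling actually employed (as the paper's analogue Lemma~\ref{GGL2} makes explicit for the germ--grain case) is a local-dependence Stein coupling: choose a uniformly random index $I$, resample the randomness on which $X_I$ depends to produce $\widetilde W^{(I)}$ independent of $X_I$, and take $\tG = -n(X_I-\mbE X_I)$, so $\norm{G}_q$ is of order $n\norm{X_I-\mbE X_I}_q$ rather than the constant $\mu$. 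The branching-process domination of neighbourhood sizes then enters through Lemmas~\ref{Branch1} and~\ref{BinMom} exactly as you indicate, and one feeds $\norm{G}_q$ and $\norm{W'-W}_q$ into Theorem~\ref{Yuting} to obtain the bound. Since this is only meant as a heuristic gloss on a cited theorem, the slip does not affect the validity of your proof, but if you include a sketch it should describe the correct coupling.
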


The implications of Lemma~\ref{ERwen} for the moments of~$\s^{-1}T_l$, $1\le l\le 6$, are as follows.

\begin{corollary}\label{ERmoments}
 For each $1 \le l\le 6$ and each $1 \le q \le \lceil\log\s\rceil$, we have $\s^{-1}\norm{T_l}_q \le bq^{5/2}$,
for a suitable fixed choice of~$b$.
\end{corollary}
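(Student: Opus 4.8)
The plan is to bound each $\norm{T_l}_q$ separately and then combine the six bounds by Minkowski's inequality, absorbing the factor $(1-b_d)\le 1$. The key observation is that, apart from $T_4'$, every $T_l'$ is of the form (a bounded constant) times $\sum_{i=1}^n \mathrm{U}(\mathcal{N}_r(i,\cG_n),i)$ with $r\in\{1,2\}$ and a bounded per-vertex contribution: $T_6'=W_d$ and $W_{d-1}$ are sums of indicators of $1$-neighbourhood statistics (so $c=1$, $\beta=0$, $r=1$); $U$ is a sum of local statistics bounded by $4$ (so $c=4$, $\beta=0$, $r=1$, using $\pi^{(d)}_{M_i}\in[0,1]$ and $n\pi^{(d)}_{M_i}/(n-M_i-1)\le 4$ on the event $M_i<n/2$); $T_3'$ and $T_5'$ are sums of $2$-neighbourhood statistics bounded by an absolute constant (again using that the $\pi$'s and $\gamma$'s lie in $[0,1]$, that $\hW_t(v_i)/(n-M_i-1)$ and $\hW_t(v_i)/M_i$ are $O(1)$ since $\hW_t(v_i)\le M_i$, and that the indicator corrections are bounded). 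So Lemma~\ref{ERwen} applies directly to each of these with fixed $c$, fixed $\beta=0$, and fixed $r\le 2$, giving $n^{-1/2}\norm{\,\cdot\,-\mbE\,\cdot\,}_q \le b' \max\{\lambda,q\}^{1/2+r}\le b'' q^{5/2}$ for $q\ge 1$ (the exponent $1/2+r$ is at most $5/2$ when $r\le 2$). Since $\sigma$ is of strict order $\sqrt n$, we have $n^{-1/2}\le \mathrm{const}\cdot\sigma^{-1}$, so $\sigma^{-1}\norm{T_l}_q \le b q^{5/2}$ for these five terms.

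For the terms $T_1'=\tfrac{q}{n}UW_{d-1}$ and $T_2'=\tfrac{q}{n}UW_d$ (here $q$ denotes the coupling constant from \eqref{qpdef}, which is $O(1)$), I would not apply Lemma~\ref{ERwen} directly, since these are products rather than sums of local statistics. Instead write, say, $U W_{d-1} = \mbE[U]W_{d-1} + (U-\mbE U)W_{d-1}$, and note $W_{d-1}\le n$ deterministically while $\mbE[U]=O(n)$; then $\tfrac{q}{n}UW_{d-1} - \mbE[\tfrac{q}{n}UW_{d-1}]$ is controlled by $O(1)\cdot\norm{W_{d-1}-\mbE W_{d-1}}_q + O(1)\cdot\norm{U-\mbE U}_q$ after centring each factor and using that the other factor is $O(n)$ in $L^\infty$ or in expectation; each of these two norms is $O(\sqrt n\, q^{3/2})$ by Lemma~\ref{ERwen} applied to $W_{d-1}$ and to $U$ respectively, so dividing by $\sigma\sim\sqrt n$ gives $O(q^{3/2})\le bq^{5/2}$. (One must be slightly careful that $\tfrac{q}{n}UW_{d-1}$ is itself bounded by an absolute constant since $U\le 4n$ and $W_{d-1}\le n$, so its centred $L^q$ norm is $O(1)$ as well; either bound suffices.)

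The remaining term $T_4'$ is the one delicate point: its summands carry a prefactor $\pi^{(d)}_{M_i}/(n-M_i-1)$ that is not bounded uniformly (it blows up when $M_i$ is close to $n-1$), and it multiplies quantities like $W_{d-1}$ that are of order $n$; however it is weighted by $\I[M_i\ge n/2]$, and for sparse \ER\ graphs ($p=\lambda/n$) the probability that any fixed $M_i\ge n/2$ is super-polynomially small. The plan is to bound $|T_4'|$ crudely by $n\cdot\sum_{i=1}^n\I[M_i\ge n/2]$ times an absolute constant (using $\pi^{(d)}_{M_i}\le 1$, $n-M_i-1\ge 1$ when the term is nonzero, and $|W_{d-1}+\hW_{d-1}(v_i)+\cdots|\le 2n+O(1)$), and then control $\norm{\sum_i\I[M_i\ge n/2]}_q$ and the centring constant $\mbE[T_4']$ using $\mbP(M_i\ge n/2)\le \e^{-cn}$ for some $c>0$ (a Chernoff bound on $\Bi(n-1,\lambda/n)$). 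This makes $\sigma^{-1}\norm{T_4}_q$ exponentially small in $n$, hence trivially $\le bq^{5/2}$. So the only real obstacle is handling $T_4'$ cleanly, and it is handled by the observation that it lives on an event of negligible probability; I would devote one short paragraph of the proof to this estimate, and dispatch the other five terms in a sentence each by citing Lemma~\ref{ERwen} (and, for $T_1',T_2'$, the product decomposition above). Finally, take $b$ to be the maximum of the six constants obtained and invoke Minkowski's inequality, noting $1-b_d\le 1$, to conclude $\sigma^{-1}\norm{T_l}_q\le bq^{5/2}$ for each $l$.
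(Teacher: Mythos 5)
Your decomposition and the overall strategy match the paper's proof closely: for $T_3,T_5$ you apply Lemma~\ref{ERwen} directly with $r=2$, $\beta=0$ to get $q^{5/2}$; for $T_6=W_d$ (and $W_{d-1}$, $U$) you apply it with $r=1$ to get $q^{3/2}$; and for $T_4$ you use the negligible-probability observation, exactly as the paper does via \eqref{Chernoff}. The one place you diverge is $T_1,T_2$. The paper proves and uses the generic moment inequality \eqref{CSXY}, $\norm{XY-\mbE XY}_q\le 2\norm{X'}_{2q}\norm{Y'}_{2q}+|\mbE X|\norm{Y'}_q+|\mbE Y|\norm{X'}_q$, applied to $X=n^{-1/2}W_{d-1}$, $Y=n^{-1/2}U$; the Cauchy--Schwarz cross-term then produces a $(2q)^3$ contribution, which is absorbed using the constraint $q\le\lceil\log\sigma\rceil$ (so that $\sigma^{-1}q^3\le \mathrm{const}\cdot q^{3/2}$). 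You instead exploit the deterministic bound $W_{d-1}\le n$ to write $\norm{(U-\mbE U)W_{d-1}}_q\le n\norm{U-\mbE U}_q$, avoiding both the $L^{2q}$ moments and the logarithmic restriction on $q$ for these two terms. That is a legitimate, slightly simpler route here; it works because one factor is deterministically $O(n)$. The paper's \eqref{CSXY} has the advantage of being reused verbatim in Corollary~\ref{GGmoments} for the germ--grain model (where, e.g., $Y_d$ is not bounded deterministically by anything better than $n$, but there too the argument still closes), so it is stated as a generic tool rather than tuned to the special structure of $T_1,T_2$.

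One small slip: your parenthetical claim that $\tfrac{q}{n}UW_{d-1}$ ``is itself bounded by an absolute constant since $U\le 4n$ and $W_{d-1}\le n$'' is false; with those bounds and the coupling constant $q\le 1$, one gets $\tfrac{q}{n}UW_{d-1}\le 4n$, so the centred $L^q$ norm of this quantity is only $O(n)$, and dividing by $\sigma\asymp\sqrt n$ would give the useless bound $O(\sqrt n)$. Fortunately you explicitly say ``either bound suffices,'' and your primary argument --- the decomposition with $\mbE U=O(n)$ and $W_{d-1}\le n$ --- is what actually carries the proof; the parenthetical alternative should simply be dropped.
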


\begin{proof}
  Taking $c=1$, $\b=0$ and $r=2$, it is immediate from Lemma~\ref{ERwen} that 
$$
      n^{-1/2}\norm{T_i}_q \Le K(0) [C_A \max\{\lambda, q \}]^{5/2} , \qquad i = 3,5,
$$
and, since~$\s^2$ is strictly of order~$n$, it follows that, for fixed $b^{(3)}$ and~$b^{(5)}$,
\besn{\label{T35-norm}
   \s^{-1} \norm{T_i}_q &\Le b\uii q^{5/2}, \quad i=3,5, \quad\mbox{for all}\ q \ge 1. 
} 
For $W_d$, $W_{d-1}$ and~$U$, now taking $r=1$, it follows similarly that
\[
    n^{-1/2}\max\{\norm{W_d}_q,\norm{W_{d-1}}_q,\norm{U/4}_q\} \Le b^{(0)} q^{3/2},
\]
for all $q \ge 1$, for some fixed~$b^{(0)}$.  To convert these bounds into bounds on the moments of~$T_1$ and~$T_2$,
we use the following inequality.
Let $X$ and~$Y$ be integrable random variables with means $\m_X$ and~$\m_Y$, and write $X' := X - \m_X$ and $Y' := Y - \m_Y$;
then
\[
    XY - \mbE\{XY\} \Eq X'Y' + \m_X Y' + \m_Y X' - \mbE\{X'Y'\}.
\]
Hence, using the triangle inequality and Cauchy--Schwarz, 
\besn{\label{CSXY}
  \norm{XY-& \mbE\{XY\}}_q \\
  	&\Le \norm{X'Y'}_q + |\m_X|\norm{Y'}_q + |\m_Y|\norm{X'}_q + \mbE|X'Y'| \\
                &\Le 2 \norm{X-\mbE X}_{2q} \norm{Y-\mbE Y}_{2q} +  |\mbE X| \norm{Y-\mbE Y}_q + |\mbE Y| \norm{X-\mbE X}_q. 
}
Taking $X = n^{-1/2}W_{d-1}$ and $Y = n^{-1/2}U$ thus gives
\bes{
  \norm{T_1}_q &\Le 8 \{b^{(0)}\}^2 (2q)^{3} + 8n^{1/2}b^{(0)} q^{3/2}; 
}
hence, for $q \le \lceil \log\s \rceil$,
\ben{\label{T1-norm}
     \s^{-1}\norm{T_1}_q \Le b^{(1)} q^{3/2},
}
for some fixed~$b^{(1)}$, since $\s^{-1}\lceil \log\s \rceil^{3/2}$ is bounded in $\s \ge 1$.  The same argument
works also for~$T_2$.

Finally, for~$T_4$, note that~$\sum_{i=1}^n I[M_i > n/2]$ has maximum value~$n$, and that the probability that it
does not take the value~$0$ is bounded by~$\eps_n := nCe^{-n/6}$, for a suitable constant $C = C(\l)$
and for $n > 4\l$, in view of a simplified Chernoff inequality read from, for example, \cite[Theorem~2.3(b)]{McD98}:  for a sum~$S$ of independent Bernoulli random variables 
with mean~$\m$,
\ben{\label{Chernoff}
    \pr[S > (1+\d)\m] \Le e^{-\d\m/3}, \qquad \mbox{if}\ \d \ge 1.
}  
Hence it follows that, for a suitably chosen~$b^{(4)}$,
\[
    \s^{-1}\norm{T_4}_q \Le \s^{-1} n^{2} \norm{\Be(\eps_n)}_q \Eq \s^{-1} n^2 \eps_n^{1/q} \Le b^{(4)},
\]
for all~$1 \le q \le \lceil\log\s\rceil$.
Combining this with \Ref{T35-norm} and~\Ref{T1-norm} completes the proof of the corollary.
\end{proof}

\nin
Hence, in particular, when applying Theorem~\ref{BRRT}, we can take $c_2 = 6 b\ceil{\log\s}^{5/2}$ in~\eqref{eq:ezbd2}, 
and $\s^{-1}\norm{T}_2 \le 6 \times 2^{5/2}b$ in~\eqref{eq:upsrti2bd}.

All that is now needed, in order to apply Theorem~\ref{BRRT}, is a
bound of order~$\bigo(1)$ for the smoothness term~$\Upsilon$, to be used in \Ref{eq:upsrti2bd}. 
We derive it by applying \cite[Theorem 3.7]{Rol15}, using arguments that are based on those 
in \cite[Section 2.3.1]{Fan14}. In what follows, we define the index sets 
$A_I :=\{ I \} \cup \{ j \colon E_{Ij}=1\} \cup \{J\}$ and 
$B_I=\{  j \notin A_I \colon E_{jk}=1 \text{ for some } k \in A_I \}$. We then set 
\[
     \mathcal{F}_2 \Def \sigma(I, A_I, B_I, J, X),
\]
observing that $D$ (and $G$) are $\mathcal{F}_2$ measurable. 

\begin{lemma}\label{ERUP}
    For $\Upsilon = \mbE[ |GD(D-1)| S_2( \mathcal{L}(W_d \giv \mathcal{F}_2))]$, we have $\Upsilon=\bigo(1)$.
\end{lemma}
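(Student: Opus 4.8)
The plan is to bound $\Upsilon = \mbE[|GD(D-1)|\, S_2(\law(W_d \giv \cF_2))]$ by separating the contribution of $|GD(D-1)|$ from the smoothness term, and then showing that conditionally on $\cF_2$ the distribution of $W_d$ is still smooth of order $\sigma^{-2}$. Since $(W_d, W_d^s, \mu_d)$ is a size biased coupling with $G = \mu_d = n(1-b_d)$ a constant of order $n$, and since the bounded coupling of Section~\ref{sBSBC} moves at most one occupant so that $D = W_d^s - W_d$ satisfies $|D| \le 2$ almost surely, the factor $|GD(D-1)|$ is bounded by $2\mu_d \le 2n$, a quantity of order $n$, and vanishes on the event $\{D = 0\}$. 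So in fact $\Upsilon \le 2n\, \mbE[\I\{D \ne 0\}\, S_2(\law(W_d \giv \cF_2))] \le 2n \sup_\omega S_2(\law(W_d \giv \cF_2))(\omega)$, and since $n$ is of strict order $\sigma^2$, it suffices to prove that $S_2(\law(W_d \giv \cF_2)) = \bigo(\sigma^{-2})$ uniformly.

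To control $S_2(\law(W_d \giv \cF_2))$, the strategy (following the outline in Remark~\ref{c_2-dependence} and the arguments of \cite[Section~2.3.1]{Fan14}) is to observe that $\cF_2 = \sigma(I, A_I, B_I, J, X)$ only freezes a bounded neighbourhood of the randomly chosen vertex $I$: conditionally on $\cF_2$, the edge indicators not incident to $A_I \cup B_I$ are still independent $\Be(p)$ variables, and the number of such ``free'' vertices is $n - \bigo(1)$ in probability (indeed $|A_I|$ and $|B_I|$ are tight since $p = \lambda/n$). One then writes $W_d$, conditionally on $\cF_2$, as a sum of a bounded $\cF_2$-measurable part plus $\sum_{i} \I\{M_i \ne d\}$ restricted to vertices far from the frozen set, and applies an established bound for $S_2$ of such a sum. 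The cleanest route is to invoke \cite[Theorem~3.7]{Rol15}, which bounds $S_2$ of a sum of dependent indicators in terms of how much the distribution spreads out; here the relevant quantity is essentially $1/\Var(W_d \giv \cF_2)$, and one needs to show this conditional variance is still of order $\sigma^2 \asymp n$ — this follows because freezing a bounded set of vertices changes the variance by only $\bigo(1)$, while the unconditional $\sigma^2$ is of strict order $n$.

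The main steps in order: (1) bound $|GD(D-1)| \le 2\mu_d = \bigo(n)$ using constancy of $G$ and the bounded coupling; (2) reduce to showing $S_2(\law(W_d \giv \cF_2)) = \bigo(n^{-1})$ uniformly; (3) describe the conditional structure of $\cG_n$ given $\cF_2$ — independence of edges outside a tight neighbourhood of $I$, and a deterministic decomposition $W_d = (\text{bounded }\cF_2\text{-measurable term}) + \sum_{i \in \text{free}} \I\{M_i \ne d\}$, where the free indicators form a locally dependent family; (4) apply \cite[Theorem~3.7]{Rol15} to the free sum, which requires a lower bound of order $n$ on its conditional variance; (5) verify that lower bound by comparing with the unconditional variance $\sigma^2$, which is of strict order $n$ by the elementary formula given before Theorem~\ref{main}, noting that conditioning on the bounded set $A_I \cup B_I$ perturbs it by $\bigo(1)$; (6) assemble the pieces: $\Upsilon \le \bigo(n) \cdot \bigo(n^{-1}) = \bigo(1)$.

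The main obstacle will be step (4)–(5): carefully checking that the hypotheses of \cite[Theorem~3.7]{Rol15} hold uniformly over the (random) conditioning $\cF_2$, and in particular that the conditional variance of the free sum stays bounded below by a constant multiple of $n$ regardless of the outcome of $(I, A_I, B_I, J, X)$. This requires a uniform-in-$\omega$ argument that the frozen portion is genuinely negligible — one must rule out atypical outcomes where $|B_I|$ is large, but since the bound in \cite[Theorem~3.7]{Rol15} depends only on the conditional variance and the (bounded) local dependence structure, and the free sum always has at least $n - |A_I| - |B_I|$ summands, a crude deterministic bound on how much removing the frozen vertices can reduce the variance should suffice. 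The bookkeeping around the $\I\{M_i = d \pm 1\}$ correction terms appearing in $\hW$-statistics is routine once this variance lower bound is in hand.
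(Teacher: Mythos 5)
Your high-level plan — bound $|GD(D-1)|$ by $\bigo(n)$ using the constancy of $G$ and the bounded coupling, then show $S_2(\law(W_d\giv\cF_2)) = \bigo(n^{-1})$ via \cite[Theorem~3.7]{Rol15} with a ``frozen boundary'' argument — matches the spirit of the paper's proof. However, there is a genuine gap in the way you handle the conditioning: you reduce to proving $S_2(\law(W_d\giv\cF_2)) = \bigo(n^{-1})$ \emph{uniformly over all outcomes of $\cF_2$}, and then assert that a ``crude deterministic bound on how much removing the frozen vertices can reduce the variance should suffice.'' This is false. The free sum has $n-|A_I|-|B_I|$ summands, and there is nothing deterministic preventing $|A_I|+|B_I|$ from being as large as $n$; on such outcomes the conditional law of $W_d$ can be nearly degenerate, in which case $S_2(\law(W_d\giv\cF_2))$ is of constant order, not $\bigo(n^{-1})$, and your bound $\Upsilon\le 2n\sup_\omega S_2(\cdots)$ gives nothing. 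The paper avoids this by splitting on the event $\{\max(|A_I|,|B_I|)\le\sqrt n\}$: on that event the uniform bound $S_2=\bigo(n^{-1})$ is proved, while the complementary event is shown to have probability $\bigo(e^{-cn^{1/4}})$ via a Chernoff/exploration bound, so that multiplying by the prefactor $n$ still yields $\bigo(1)$. You need this splitting (or an equivalent truncation); a uniform-in-$\omega$ bound is simply not available.

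Two further points, both minor but worth correcting. First, the bound $|D(D-1)|\le 2$ implicit in ``$|GD(D-1)|\le 2\mu_d$'' is wrong: with $|D|\le 2$ one has $D(D-1)\in\{0,2,6\}$, so the correct constant is $6$ (the paper uses $6n(1-b_d)$). This does not affect the $\bigo(1)$ conclusion but should be fixed. Second, you describe \cite[Theorem~3.7]{Rol15} as essentially bounding $S_2$ by ``$1/\Var(W_d\giv\cF_2)$.'' That is not what the theorem says: it is a bound in terms of the transition probabilities $Q_{\pm1}$ of a reversible Markov chain on configurations (here, resampling a uniformly chosen edge in $C_I=(A_I\cup B_I)^c$), and requires computing $\mbE Q_{\pm1}$ (which supplies the $\bigo(n^2)$ denominator), $\Var Q_{\pm1}$, and $\mbE|Q_{\pm1,\pm1}-Q_{\pm1}^2|$. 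The intuition ``$S_2\approx 1/\text{variance}$'' is a useful guide but is not the hypothesis you must verify, and the actual verification — working out explicit formulas for $Q_{\pm1}$ in terms of connected/disconnected vertex-pair counts restricted to $C_I$, as the paper does — is where most of the work lies. You flag this as the main obstacle, which is accurate, but your sketch of how to overcome it (the deterministic uniform variance bound) is the part that would fail.
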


\begin{proof}
Because the size biased configuration is formed by altering at most a single edge of $\mathcal{G}_n$,
we have $|D|\leq 2$, and hence $|D(D-1)| \le 6$. Thus
\begin{align}
  \Upsilon &\Eq \mbE[ |GD(D-1)| S_2( \mathcal{L}(W_d \giv \mathcal{F}_2))] 
        \Le 6 n (1-b_d) \mbE[ S_2( \mathcal{L}(W_d \giv \mathcal{F}_2))] \nonumber \\
   &\Le  6 n (1-b_d) \mbE\left( S_2( \mathcal{L}(W_d \giv \mathcal{F}_2)) \I[\max\{|A_I| , | B_I |\} \leq \sqrt n]\right) 
        \non  \\ &\qquad\qquad\qquad\qquad\qquad\quad   + 6 n(1- b_d) \mbP[ \max\{|A_I|,|B_I|\} > \sqrt n]. \label{UP1}
\end{align}
To bound the second term in \eqref{UP1}, first observe that the distribution of~$|A_I|$ is stochastically dominated by 
$\Bi(n-1,p)+2$. Thus, by~\Ref{Chernoff}, there exists $C=C(\l)$ such that
\begin{equation}\label{McdA}
    \mbP[|A_I | > \sqrt{n}] \Le Ce^{-\sqrt{n}/3}
\end{equation}
whenever $\sqrt n \ge 2(\l+1)$.
Next observe that if $Y_1, Y_2, \dots$ are a sequence of i.i.d.\ $\Bi(n-1,p)$--distributed random variables, 
then, using the standard exploration process coupling, $|B_I|$ 
is stochastically dominated by $\sum_{i=1}^{|A_I|} Y_i$. Using~\Ref{Chernoff} again in \eqref{McDB1}, 
we then have 
\begin{align}
  \mbP\left( |B_I| > \sqrt n\right) &\Le \mbP(|A_I| > n^{1/4}) + \mbP(|B_I| > \sqrt{n} \giv |A_I| \leq {n}^{1/4}) \non \\
  &\Le \mbP(|A_I| > n^{1/4})+ \mbP( \exists \;  i\in\{1, \dots, \lfloor n^{1/4} \rfloor \} \colon\, Y_i > n^{1/4}) \non \\
  &\Le  (1+ n^{1/4}) C'e^{-n^{1/4}/3}\label{McDB1}\\
  &=\bigo(n^{-1}).\label{McDB2}
\end{align}
Combining \eqref{McdA} and \eqref{McDB2}, we see that the second term in \eqref{UP1} is of order~$\bigo(1)$.

To bound the first term in \eqref{UP1}, given $\mathcal{F}_2$, we define a random graph $\mathcal{G}_n^{\mathcal{F}_2}$ 
with vertex set $\mathcal{V}$ and edge indicators $E^{\mathcal{F}_2}_{uv}$ by letting $E^{\mathcal{F}_2}_{ij} = E_{ij}$ 
for $i \in A_I$ and $j \in \{ 1, \dots, n \}$, and letting $E^{\mathcal{F}_2}_{ij}$ be independent~$\Be(p)$ random 
variables for $i,j\in (A_I)^c$. If we let $W_d^{\mathcal{F}_2}$ denote the number of vertices in 
$\mathcal{G}_n^{\mathcal{F}_2}$ with degree different from~$d$, then 
$\mathcal{L}( W_d^{\mathcal{F}_2}) = \mathcal{L}(W_d \giv {\mathcal{F}_2})$.
We now show that, for any fixed $\mathcal{F}_2$ with $\max\{| A_I |,| B_I |\} \leq \sqrt{n}$, we have
\begin{equation}\label{P2}
     S_2( \mathcal{L}(W_d \giv \mathcal{F}_2)) \Eq \bigo(n^{-1}),
\end{equation}
by applying \cite[Theorem 3.7]{Rol15}.
For ease of notation, in the remainder of the proof, we suppress the superscript~$\mathcal{F}_2$, tacitly assuming that 
every random quantity has distribution conditional on~$\mathcal{F}_2$.

Let $\mathcal{G}_n$ be as above. Let~$\mathcal{G}'_n$ be the graph obtained by choosing a pair of distinct 
vertices $v_i$ and $v_j$ with
\[ 
   \{i,j\}\ \subset\ C_I \Def (A_I \cup B_I)^c,
\] 
uniformly at random, and resampling the edge indicator between $v_i$ and~$v_j$. Let $\mathcal{G}''_n$ be the 
graph obtained by applying the same 
operation to $\mathcal{G}'_n$. If we let $\widetilde W_d$, $\widetilde W'_d$ and $\widetilde W''_d$ be the numbers of 
vertices with degree different from~$d$ in $\mathcal{G}_n$, $\mathcal{G}'_n$ and~$\mathcal{G}''_n$, respectively, 
then $(\widetilde W_d, \widetilde W'_d ,\widetilde W''_d)$ are three successive states of a reversible Markov chain. 
Thus, if 
\[ 
     Q_{\pm 1}(G) \Def \mbP\bigl[ \widetilde W'_d = \widetilde W_d \pm 1 \giv \mathcal{G}_n = G\bigr]
\] 
and
\[
    Q_{\pm 1, \pm 1}(G) \Def 
  \mbP\bigl[\widetilde W'_d = \widetilde W_d \pm 1 , \widetilde W''_d = \widetilde W'_d \pm 1  \giv \mathcal{G}_n =G \bigr],
\]  
then, by \cite[Theorem 3.7]{Rol15}, we have
\begin{align}
   \label{RossEq}
\begin{split}
     S_2(\mathcal{L}(\tilde W_d \giv \mathcal{F}_2 )) 
        &\Le \frac{1}{(\mbE(Q_1(\mathcal{G}_n)))^2} \Bigl[ 2 \text{Var} Q_{1}(\mathcal{G}_n) + 
          \mbE |Q_{1,1}(\mathcal{G}_n)-Q_1(\mathcal{G}_n)^2| \\
     &\qquad\qquad\quad\mbox{} + 2 \text{Var} Q_{-1}(\mathcal{G}_n) + \mbE| Q_{-1,-1}(\mathcal{G}_n) - Q_{-1}(\mathcal{G}_n)^2| \Bigr].
\end{split}
\end{align} 
The remaining argument shows that this quantity is of order~$\bigo(n^{-1})$.

We first need expressions for $Q_{\pm 1}(G)$.  For $s,t \geq 0$, we let ${H}_{s,t}^{C_I}(G)$ ($R_{s,t}^{C_I}(G))$ 
denote the numbers of \emph{connected} (\emph{disconnected}) vertex pairs $\{v_i,v_j\}$ in~$G$ 
that have degrees $s$ and~$t$, and are such that $i,j \in C_I$. We also let ${H}_{s}^{C_I}(G)=\sum_{t \geq 0} H_{s,t}^{C_I}(G)$ 
($R_{s}^{C_I}(G)=\sum_{t \geq 0} R_{s,t}^{C_I}(G)$) be the numbers of connected (disconnected) vertex pairs $\{v_i,v_j\}$ 
in~$G$ such that at least one of $v_i,v_j$ has degree~$s$, and such that $i,j \in C_I$. We now have
\begin{align}
    Q_1(G) &\Eq p\frac{R_d^{C_I}(G) - R_{d,d}^{C_I}(G)-R_{d-1,d}^{C_I}(G)}{{|C_I| \choose 2}}
            +(1-p)\frac{H_d^{C_I}(G) - H_{d,d}^{C_I}(G)-H_{d+1,d}^{C_I}(G)}{{|C_I| \choose 2}} \,;\label{Qp1Eq}\\
    Q_{-1}(G) &\Eq p\frac{R_{d-1}^{C_I}(G) - R_{d-1,d}^{C_I}(G) - R_{d-1,d-1}^{C_I}(G)}{{|C_I| \choose 2}} \non\\
          &\qquad\qquad\mbox{}   +(1-p)\frac{H_{d+1}^{C_I}(G) - H_{d,d+1}^{C_I}(G) - H_{d+1,d+1}^{C_I}(G)}{{|C_I| \choose 2}}\,. \label{Qm1Eq}
\end{align}
These equalities are obtained through elementary considerations. For example, the first term in \eqref{Qp1Eq} is obtained 
by observing that the number of degree $d$ vertices decreases by exactly~$1$ when the chosen vertices are connected 
after resampling, and were previously disconnected, with one having had degree~$d$ 
and the other having had neither degree $d$ nor $d-1$; if both had had degree~$d$, 
$\widetilde W'_d$ would have exceeded $\widetilde W_d$ by~$2$, and if one had degree~$d$ and the other degree~$d-1$ 
there would be no change.

To prove that $\mbE(Q_{\pm 1}(\cG_n))$ is of strict order~$n^{-1}$, so that the denominator in~\Ref{RossEq} results
in a factor of order $\bigo(n^2)$,  we observe that, conditional on $\mathcal{F}_2$, 
the degree of each vertex~$v_i$ with $i\in C_I$ is distributed as $\Bi(|B_I|+|C_I|-1,p)$. Thus, if we let 
{\small
\[
    b_s^{\mathcal{F}_2} \Def {|B_I|+|C_I|-2 \choose s} p^s (1-p)^{|B_I|+|C_I|-2-s},
\]}
then we have   
\begin{align}
   \label{EQpm1Eqs}
\begin{split}
   \mbE(R^{C_I}_{s,t}(\cG_n)) &\Eq {|C_I| \choose 2} 
               (1-p)[2 b_s^{\mathcal{F}_2}b_t^{\mathcal{F}_2} \I \{s \neq t\} + (b_s^{\mathcal{F}_2})^2 \I\{s=t\}], \\
   \mbE(R^{C_I}_{s}(\cG_n)) &\Eq {|C_I| \choose 2}  
               (1-p)[2b_s^{\mathcal{F}_2}-(b_s^{\mathcal{F}_2})^2], \\
   \mbE( H^{C_I}_{s,t}(\cG_n)) &\Eq {|C_I| \choose 2} 
          p[2 b_{s-1}^{\mathcal{F}_2}b_{t-1}^{\mathcal{F}_2} \I \{s \neq t\} + (b_{s-1}^{\mathcal{F}_2})^2 \I\{s=t\}], \\
   \mbE( H^{C_I}_{s}(\cG_n)) &\Eq {|C_I| \choose 2} p [2b_{s-1}^{\mathcal{F}_2}-(b_{s-1}^{\mathcal{F}_2})^2].
\end{split}
\end{align}
Combining \eqref{EQpm1Eqs} with \eqref{Qp1Eq} and \eqref{Qm1Eq}, we obtain
\begin{align}
\label{EQpm1F}
\begin{split}
    \mbE\{Q_1(\cG_n)\} &\Eq 2p(1-p)[b_d^{\mathcal{F}_2}+b_{d-1}^{\mathcal{F}_2} -(b_d^{\mathcal{F}_2})^2
                -(b_{d-1}^{\mathcal{F}_2})^2-2b_d^{\mathcal{F}_2}b_{d-1}^{\mathcal{F}_2}]\\
          &\Eq 2p(1-p)( b_d^{\mathcal{F}_2}+b_{d-1}^{\mathcal{F}_2} )(1-b_d^{\mathcal{F}_2}-b_{d-1}^{\mathcal{F}_2}) \\    
     &\Eq \mbE(Q_{-1}(\cG_n)),
\end{split}
\end{align}
which, for fixed~$d$ and with $p = \l/n$ for fixed~$\l$, are both of strict order $n^{-1}$.

To bound $\mbE| Q_{1,1}(\mathcal{G}_n)-Q_1(\mathcal{G}_n)^2|$, for $G \in \IG_n$ and $C_I \subset \mathcal{V}$, 
we let $\IG_n^{(1)}(G)$ be the set of graphs in $\IG_n$ that can be obtained by modifying at most a single edge in~$G$ 
whose ends are both in $C_I$. By observing that modifying a single edge can alter the degrees of at most two vertices,
 we obtain, for $G \in \IG_n$, $G^{'} \in \IG_n^{(1)}(G)$, and $s,t \geq 0$,
\begin{align} 
  \label{BQpm1Eqs}
 \begin{split}
         &|R^{C_I}_{s}(G)- R^{C_I}_{s}(G^{'})| \Le 2|C_I|, \qquad |H^{C_I}_{s}(G)-H^{C_I}_{s}(G^{'})| \Le 2s, \\
         &| R^{C_I}_{s,t}(G) - R^{C_I}_{s,t}(G^{'})| \Le 2|C_I|, \qquad |H^{C_I}_{s,t}(G)-H^{C_I}_{s,t}(G^{'})|
               \Le 2\max\{s,t\}.
 \end{split}
\end{align}
Combining \eqref{BQpm1Eqs} with \eqref{Qp1Eq} and \eqref{Qm1Eq} we have
\begin{align}\label{Qpm1nb}
     |Q_{\pm 1}(G)-Q_{\pm 1}(G^{'})| \Le \frac{6p |C_I| + 6(1-p)(d+1)}{{|C_I| \choose 2}},
\end{align}
for any $G \in \IG_n$, $G^{'} \in \IG_n^{(1)}(G)$, and $C_I \subseteq \mathcal{V}$.
Using \eqref{EQpm1F} and \eqref{Qpm1nb} in \eqref{Q2sb1} below, and the prescription that  
$p = \l/n$ and $\max\{|A_I|,|B_I|\} < \sqrt{n}$ in \eqref{Q2sb2},
we obtain
\begin{align}
   \mbE| &Q_{1,1}(\mathcal{G}_n)-Q_1(\mathcal{G}_n)^2| \nonumber \\
      &= \sum_{G \in \IG_n}\mbP(\cG_n =G) |Q_{1,1}(G)-Q_1(G)^2| \nonumber \\
      &= \sum_{G \in \IG_n} Q_1(G)\mbP(\cG_n =G) \left| \left( \sum_{G^{'} \in \IG_n^{(1)}(G)} Q_{1}(G^{'}) 
              \mbP(\cG_n^{'} = G^{'} | \cG_n=G) \right)-Q_1(G) \right| \nonumber \\
      &\leq \sup_{G \in \IG_n, \, G^{'} \in \IG_n^{(1)}(G)}\left\{| Q_1(G^{'})-Q_1(G) |\right\} \mbE (Q_1(\cG_n)) \non \\
      &\leq 12\frac{p |C_I| + (1-p) (d+1)}{{|C_I| \choose 2}}\, 
           p(1-p)( b_d^{\mathcal{F}_2}+b_{d-1}^{\mathcal{F}_2} )(1-b_d^{\mathcal{F}_2}-b_{d-1}^{\mathcal{F}_2})
                \label{Q2sb1} \\
      &=\bigo(n^{-3}); \label{Q2sb2}
\end{align}
and the same arguments apply to $\mbE| Q_{-1,-1}(\mathcal{G}_n)-Q_{-1}(\mathcal{G}_n)^2|$. Since also 
\begin{equation}\label{Varn3}
      \Var(Q_{\pm 1}(\cG_n)) \Eq \bigo(n^{-3}),
\end{equation}
in view of \cite[Pages 1416--1418]{Fan14},
combining \eqref{EQpm1F}, \eqref{Q2sb2} and  \eqref{Varn3} with \eqref{RossEq} gives~\eqref{P2}, and hence the result.
\end{proof}

\ignore{
\begin{lemma}\label{ERS}
We have $S_2(W_d)=O(n^{-1})$.
\end{lemma}
\begin{proof}
The result follows from the same arguments used to establish \eqref{P2} but in this case there is no need to 
condition on $\mathcal{F}_2$.
\end{proof}
}

Theorem~\ref{main} now follows directly from Theorem~\ref{BRRT}, in view of the bounds on the quantities appearing
in \eqref{eq:ezbd2} and~\eqref{eq:upsrti2bd} established in Corollary~\ref{ERmoments} and Lemma~\ref{ERUP}.

\subsection{Germ--grain models}\label{Sec:GGproof}

To prove Theorem \ref{GGlclt}, we construct a size biased coupling, based on a new configuration  
 $\mathcal{V}^s = \{ V_1^s , \dots, V_n^s \}$.
Let  $\pi^{(d)}_{M_I}$, $\gamma^{(d)}_{M_I}$ and~$q$ be as given in \eqref{pigadef} and~\eqref{qpdef}.
Let $Z \sim \Be(q)$ and~$I$, distributed uniformly on~$[n]$, be independent of everything else.
Given~$M_I$, let $Z_+ \sim \Be(\pi^{(d)}_{M_I})$ and $Z_- \sim \Be ( \gamma^{(d)}_{M_I})$ be conditionally independent. 
Set 
\begin{equation}\label{Xdefi}
   X \Def Z Z_+ - (1-Z)Z_-.
\end{equation}
If $X=0$, let $\mathcal{V}^s = \mathcal{V}$. If $X=-1$, let $J$ be distributed uniformly on $\mathcal{I}_{I,r}$, 
defined in \eqref{SetIdef}, independently of everything else; then let $V^s_J$ be distributed uniformly 
on $C_n \setminus B_{I,r}$, independently of everything else, and set $V_i^s =V_i$ for all $i \neq J$. 
If $X=1$, let $J$ be distributed uniformly on (noting the modified set definition)
$$
   \hcI^c_{I,r} \Def [n] \setminus (\mathcal{I}_{I,r} \cup \{I\}),
$$
independently of everything else; then let $V^s_J$ be  distributed uniformly on $B_{I,r}$, and set $V_i^s =V_i$ for all 
$i \neq J$. By considering~\eq{SBidne}, if we let $W^s_d$ be the number of germs in $\mathcal{V}^s$ whose $r$-grain does not contain exactly $d$ germs, then $W^s_d$ has the size biased distribution of $W_d$; see also \cite[Section~3.2.2]{Bart18}.

Given this coupling, the proof of Theorem \ref{GGlclt} is split into several lemmas. 
To state them, we first require some definitions. For $s>0$ and $x \in C_n$, let 
\begin{equation}\label{Isx}
    \mathcal{I}_{s}(x) \Def \left\{ j \in [n]\colon\, V_j \in \mathcal{V} \cap B_{s}(x) \right\}
\end{equation}
be the set of germs contained in $B_{s}(x)$, and write
\begin{equation}\label{Ndef}
    N_s(x) \Def \card\{\mathcal{I}_{s}(x)\}
\end{equation}
be the number of germs contained in $B_{s}(x)$.
Given $\mathcal{V}$, for $i\in [n]$ and $j \in \hcI^c_{i,r}$, let $A_{ij}$ be the expected increment in~$W_d$ when~$V_j$ 
is moved to a uniformly selected location in $B_{i,r}$ and, for $i \in [n]$ and $j\in \mathcal{I}_{i,r}$, let~$R_{ij}$ be 
the expected increment in~$W_d$ when~$V_j$ is moved to a uniformly selected location in $C_n \setminus B_{i,r}$. 
To help express $A_{ij}$ and~$R_{ij}$, let
\begin{equation}\label{ExpS}
\begin{split}
   S_j-1 &\Def -\I \{M_j \not=d \}+ \sum_{\ell \in \mathcal{I}_{j,r}} [\I\{M_\ell=d\} - \I \{M_\ell = d+1\}] 
\end{split}
\end{equation}
be the increment in~$W_d$ when~$V_j$ is removed from~$\mathcal{V}$, and
\begin{equation}\label{ExpH}
  H_i +1 \Def \frac{1}{\pi r^2} \int_{B_{i,r}} dx \biggl\{ \I\{ N_r(x)\not=d \} + 
           \sum_{\ell \in \mathcal{I}_{r}(x)} [\I\{M_\ell = d\}-\I\{M_\ell=d-1\}]  \biggr\}
\end{equation} 
be the expected increment in~$W_d$ when an additional germ is inserted uniformly at random in~$B_{i,r}$.
At first, it may appear that $A_{ij} = S_j+H_i$; however, if removing~$V_j$ from~$\mathcal{V}$ causes the value of~$H_i$ 
to change, then this is not the case. We let 
\begin{align}
   Q_{ij}&\Def \frac{1}{\pi r^2} \int_{B_{i,r}}  dx 
              \Bigl\{ \I\{x \in B_{j,r}\}[-\I\{ N_r(x) \not=d\} + \I\{ N_r(x) \not=d+1\}] \nonumber \\
    &\qquad\mbox{} +   \sum_{\ell \in  \mathcal{I}_{r}(x) \cap \mathcal{I}_{j,r}} 
               [\I\{M_\ell=d+1\}-2\I\{M_\ell=d\}+\I\{M_\ell=d-1\}] \Bigr\} \notag \\
  \begin{split}
    &\Eq \frac{1}{\pi r^2} \int_{B_{i,r}}  dx \Bigl\{ \I\{x \in B_{j,r}\}[\I\{ N_r(x) =d\} - \I\{ N_r(x) =d+1\}] \\
    &\qquad\mbox{} +   \sum_{\ell \in  \mathcal{I}_{r}(x) \cap \mathcal{I}_{j,r}} 
                [\I\{M_\ell=d+1\}-2\I\{M_\ell=d\}+\I\{M_\ell=d-1\}] \Bigr\}  
  \end{split}\label{ExpQ}
\end{align}
be the increment in~$H_i$ caused by removing $V_j$ from~$\mathcal{V}$.  Observe that 
\begin{equation}\label{QFR}
    Q_{ij} \Eq 0 \quad\mbox{if}\quad D(V_i, V_j)\ >\ 3r. 
\end{equation}
With these definitions, we now have 
\begin{equation}\label{AExp}
     A_{ij} \Eq H_i +S_j + Q_{ij}.
\end{equation}

To express $R_{ij}$, first note that, when a germ is inserted uniformly into~$B_{i,r}^c$, then the expected change 
in~$W_d$ is given by 
\begin{align}
   K_i +1&\Def  \frac{1}{n-\pi r^2} \int_{B_{i,r}^c} dx \Bigl\{ \I\{ N_r(x)\not=d \} 
              + \sum_{\ell \in \mathcal{I}_{r}(x)}^n [\I\{ M_{\ell}=d\} - \I\{ M_{\ell}=d-1 \}] \Bigr\}\nonumber \\
     &\Eq \frac{1}{n-\pi r^2} \biggl[  \int_{C_n} dx \Bigl\{ \I\{ N_r(x)\not=d \}
             + \sum_{\ell \in \mathcal{I}_{r}(x)}^n [\I\{ M_{\ell}\not=d-1\} - \I\{ M_{\ell}\not=d \}] \Bigr\} \non \\
     &\qquad\mbox{} - \int_{B_{i,r}} dx \Bigl\{ \I\{ N_r(x)\not=d \}
             + \sum_{\ell \in \mathcal{I}_{r}(x)}^n  [\I\{ M_{\ell}=d\} - \I\{ M_{\ell}=d-1\}] \Bigr\} \biggr] \nonumber \\
     &\Eq \frac{\pi r^2}{n-\pi r^2}\, (W_{d-1} - W_{d}-H_i)-\frac{1}{n-\pi r^2}Y_d+1,\label{InsOut}
\end{align}
where 
\[
    Y_d \Def \int_{C_n} \I\{ N_r(x)=d \}\, dx.
\]
As before, if removing $V_j$ from $\mathcal{V}$ does not cause the value of $K_i$ to change, then
$R_{ij}= S_j + K_i$. 
To account for instances where~$K_i$ does change, for $i\in [n]$ and $j\in \mathcal{I}_{i,r}$, we let
\begin{align}
    E_{ij} &\Def \frac{1}{\pi r^2} \int_{B_{j,r}\setminus B_{i,r}} dx \Bigl\{ \I \{N_r(x)\not=d+1\}
                  - \I \{N_r(x)\not=d\} \notag \\
        &\qquad\mbox{}  + \sum_{\ell \in  \mathcal{I}_{r}(x)} [\I\{ M_\ell=d+1\} - 2 \I\{M_\ell = d\}
                   + \I \{M_\ell=d-1\}] \Bigr\}, \notag \\
  \begin{split}
    &\Eq \frac{1}{\pi r^2} \int_{B_{j,r}\setminus B_{i,r}} dx \Bigl\{ \I \{N_r(x)=d\} - \I \{N_r(x)=d+1\}  \\
    &\qquad\mbox{} +\sum_{\ell \in  \mathcal{I}_{r}(x)} [\I\{ M_\ell=d+1\} - 2 \I\{M_\ell = d\} + \I \{M_\ell=d-1\}] \Bigr\},
  \end{split}\label{ExpE}
\end{align}
be the change in~$K_i$ caused by removing $V_j$ from~$\mathcal{V}$. 
Using the expression for~$K_i$ in~\eqref{InsOut}, we now have
\begin{equation}\label{RExp}
    R_{ij} \Eq S_j + \frac{\pi r^2}{n-\pi r^2}\, (W_d - W_{d-1}-H_i) -\frac{1}{n-\pi r^2}\,Y_d+ E_{ij}.
\end{equation}

The following lemma is similar to \cite[Lemma 3.3]{Bart18}, and shows that the values of $S_j$, $H_i$, $Q_{ij}$ 
and~$E_{ij}$, as well as of $\sum_{j \in \mathcal{I}_{i,r}}S_j$, are uniformly bounded. 
The bounds are expressed in terms of~$\kappa_s$, where~$\kappa_s$ is the maximum number of disjoint unit balls that 
can be packed inside a ball of radius~$s$. A crude bound on~$\kappa_s$, which is sufficient for our purposes, is 
\[
   \kappa_s \Le \frac{\leb\{B_{s}(\cdot)\}}{\leb\{B_{1}(\cdot)\}} \Eq s^2,
\]
where $\leb$ denotes Lebesgue measure.

\begin{lemma}\label{GGL3}
For every $i,j$ and configuration $\mathcal{V}$,
\[
  |S_j| \leq \kappa_{3} (d+2), \quad |H_i| \leq \kappa_{3} (d+1),  \quad |Q_{ij}| \leq 2 \kappa_{3} (d+2), 
     \quad |E_{ij}| \leq 2 \kappa_{3} (d+2),
\]
and
\[
   \Bigl|\sum_{j \in \mathcal{I}_{r}(V_i)}S_j \Bigr| \Le \kappa_{5}(d+2)^2.
\]
\end{lemma}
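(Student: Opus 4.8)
The strategy is to bound each quantity by reducing it to a sum of indicators indexed either by germs in a bounded neighbourhood of $V_i$ (or $V_j$), or by points $x$ in a ball of area $\pi r^2$, and then to apply the packing estimate $\kappa_s \le s^2$ to count how many germs can contribute. First I would handle $S_j$: from the definition \eqref{ExpS}, $S_j - 1$ is a signed sum of at most one term $\I\{M_j \ne d\}$ and, for each $\ell \in \mathcal{I}_{j,r}$, a difference of two indicators bounded in absolute value by $1$; hence $|S_j - 1| \le 1 + |\mathcal{I}_{j,r}| = 1 + M_j$. The point is that $M_j$ itself is not uniformly bounded, so this crude bound is not enough; instead, observe that each term $\I\{M_\ell = d\} - \I\{M_\ell = d+1\}$ is nonzero only when $M_\ell \in \{d, d+1\}$, i.e.\ when the grain of germ $\ell$ contains at most $d+1$ germs. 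Germs $\ell$ with $M_\ell \le d+1$ that lie in $B_{j,r}$ can be covered by a bounded number of unit balls: two such germs $\ell, \ell'$ at distance $\le 1$ would force $M_\ell$ or $M_{\ell'}$ to be large once $d+2$ germs accumulate, so the number of $\ell \in \mathcal{I}_{j,r}$ with $M_\ell \le d+1$ is at most $\kappa_r \cdot (d+2)$; combined with the leading $\I\{M_j \ne d\} \le 1$ this gives $|S_j| \le \kappa_r(d+2) \le \kappa_3(d+2)$. (The slack to $\kappa_3$ is harmless since $r$ is fixed, and the same $\kappa_3$ bound is used uniformly below; I would state it for $\kappa_r$ and then note $\kappa_r \le \kappa_3$ when $r \le 3$, else absorb constants — the precise packing radius is immaterial, only that it is a constant depending on $r$.)

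Next, $H_i$ from \eqref{ExpH}: here $H_i + 1$ is an \emph{average} over $x \in B_{i,r}$ of the quantity $\I\{N_r(x) \ne d\} + \sum_{\ell \in \mathcal{I}_r(x)} [\I\{M_\ell = d\} - \I\{M_\ell = d-1\}]$, so it suffices to bound this integrand uniformly in $x \in B_{i,r}$. By the same packing argument, the germs $\ell \in \mathcal{I}_r(x)$ with $M_\ell \in \{d-1, d\}$ number at most $\kappa_r(d+1)$ (now the relevant threshold is $M_\ell \le d$, giving the factor $d+1$), and adding $\I\{N_r(x)\ne d\} \le 1$ yields $|H_i + 1| \le \kappa_r(d+1) + 1$, hence $|H_i| \le \kappa_3(d+1)$ after adjusting constants. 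For $Q_{ij}$ from \eqref{ExpQ}: the integrand is supported on $x \in B_{i,r} \cap B_{j,r}$ for the first term, and the sum ranges over $\ell \in \mathcal{I}_r(x) \cap \mathcal{I}_{j,r}$; each summand $\I\{M_\ell = d+1\} - 2\I\{M_\ell = d\} + \I\{M_\ell = d-1\}$ is bounded by $2$ in absolute value and nonzero only for $M_\ell \in \{d-1, d, d+1\}$, so the number of contributing $\ell$ is at most $\kappa_r(d+2)$; together with the first term's contribution of at most $1$, this gives $|Q_{ij}| \le 2\kappa_r(d+2) + 1 \le 2\kappa_3(d+2)$ (again absorbing the additive $1$ into the constant, or simply noting the factor $2$ already covers it). The bound $|E_{ij}| \le 2\kappa_3(d+2)$ from \eqref{ExpE} is entirely analogous: the integrand is supported on $B_{j,r} \setminus B_{i,r} \subseteq B_{j,r}$, the difference $\I\{N_r(x)=d\} - \I\{N_r(x)=d+1\}$ contributes at most $1$, and the sum over $\ell \in \mathcal{I}_r(x)$ has each summand bounded by $2$ and nonzero only for $M_\ell \in \{d-1,d,d+1\}$, yielding at most $\kappa_r(d+2)$ nonzero terms.

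Finally, for $\bigl|\sum_{j \in \mathcal{I}_r(V_i)} S_j\bigr| \le \kappa_5(d+2)^2$: I would first note that $S_j \ne 0$ only if either $M_j \ne d$ or some germ $\ell \in \mathcal{I}_{j,r}$ has $M_\ell \le d+1$; in particular, expanding $S_j - 1$ and summing, we get $\sum_{j \in \mathcal{I}_r(V_i)} (S_j - 1) = -\sum_{j \in \mathcal{I}_r(V_i)} \I\{M_j \ne d\} + \sum_{j \in \mathcal{I}_r(V_i)} \sum_{\ell \in \mathcal{I}_{j,r}} [\I\{M_\ell = d\} - \I\{M_\ell = d+1\}]$, but this is awkward because the index set $\mathcal{I}_r(V_i)$ has unbounded cardinality. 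The cleaner route: a term $S_j$ with $j \in \mathcal{I}_r(V_i)$ is nonzero only when $M_j \ne d$ \emph{or} $\mathcal{I}_{j,r}$ contains a germ of degree in $\{d, d+1\}$; in either case, by the packing argument applied within $B_{2r}(V_i)$ (since $\mathcal{I}_{j,r} \subseteq \mathcal{I}_{2r}(V_i)$ for $j \in \mathcal{I}_r(V_i)$), the number of germs $\ell$ with $M_\ell \le d+1$ lying in $B_{2r}(V_i)$ is at most $\kappa_{2r}(d+2)$, and each such $\ell$ can "activate" $S_j$ for at most those $j$ with $\ell \in \mathcal{I}_{j,r}$, i.e.\ $D(V_j,V_\ell) \le r$, of which there are at most $M_\ell + 1 \le d+2$ (bounded once $M_\ell \le d+1$); together with the germs $j \in \mathcal{I}_r(V_i)$ with $M_j \ne d$ directly (also at most $\kappa_r(d+2)$ many, same packing argument), the total count of nonzero $S_j$ is $O((d+2))$, and each $|S_j| \le \kappa_3(d+2)$ by the first bound, giving a product bound of order $(d+2)^2$ with a packing constant that fits inside $\kappa_5$. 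The main obstacle throughout is the bookkeeping of \emph{which} packing radius and constant to carry — the essential mechanism (bounded degree $\Rightarrow$ bounded local count, via packing) is the same everywhere, but one must be careful that the neighbourhoods involved in $Q_{ij}, E_{ij}$ and the double-sum genuinely sit inside balls of the claimed radii ($3r$, $5r$), which is where \eqref{QFR} and the triangle inequality for $D$ are used; I would state these containments explicitly and then let the packing bound $\kappa_s \le s^2$ do the rest, matching constants to the claimed $\kappa_3, \kappa_5$.
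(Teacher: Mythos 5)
Your core mechanism — a packing argument limiting the number of low-degree germs in a fixed neighbourhood — is the same one the paper uses, but there are two genuine gaps in how you deploy it.

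The first concerns the packing constant. You propose packing unit balls, obtain something like $\kappa_r$, and then remark that ``the precise packing radius is immaterial, only that it is a constant depending on $r$.'' But the lemma's bounds $\kappa_3(d+2)$, $\kappa_3(d+1)$, $\kappa_5(d+2)^2$ use universal geometric constants that do \emph{not} depend on $r$; the statement is uniform in $r$, and a constant that grows with $r$ (as $\kappa_r\asymp r^2$ does) would not prove it. The paper achieves scale invariance through \eqref{OverEquiv}: $M_\ell$ equals the number of \emph{$r/2$-grains} intersecting the $r/2$-grain of germ $\ell$. One therefore packs $r/2$-balls, not unit balls: a maximal pairwise-disjoint family of $r/2$-balls with centres in $B_{sr}(x)$ lies inside $B_{sr+r/2}(x)$, and rescaling by $r/2$ reduces this to packing unit balls inside $B_{2s+1}$, giving the $r$-free constant $\kappa_{2s+1}$ in \eqref{BoGa2}. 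Without this rescaling step your constants are wrong. Relatedly, the proposed ``absorb the additive $+1$'' step does not yield the precise bound. For $|S_j|$ the paper folds the leading $1-\I\{M_j\ne d\}=\I\{M_j=d\}$ into the $\ell$-sum (so the sum runs over $\mathcal{I}_r(V_j)\supset\{j\}$), and then exploits the asymmetry: the upper bound costs $\kappa_3(d+1)$ via the $\I\{M_\ell=d\}$ terms, the lower bound costs $\kappa_3(d+2)$ via the $\I\{M_\ell=d+1\}$ terms, giving exactly $\kappa_3(d+2)$. There is no room to absorb an extra additive $1$ when an explicit inequality is asserted.

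The second gap is in the double-sum bound, where your route is structurally different and quantitatively weaker. Your characterisation of when $S_j\ne 0$ has the condition reversed — from \eqref{ExpS}, $S_j=\I\{M_j=d\}+\sum_{\ell\in\mathcal{I}_{j,r}}[\I\{M_\ell=d\}-\I\{M_\ell=d+1\}]$, so $S_j$ vanishes unless $M_j=d$ or some $\ell\in\mathcal{I}_{j,r}$ has $M_\ell\in\{d,d+1\}$. More importantly, the count of nonzero $S_j$ is not $O(d+2)$ as you claim but $O((d+2)^2)$: there can be up to $\kappa_5(d+2)$ low-degree activating germs $\ell$ in $B_{2r}(V_i)$, each of which can activate up to $M_\ell+1\le d+2$ indices $j$. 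Multiplying by $\max_j|S_j|=O(d+2)$ then gives $O((d+2)^3)$, overshooting the claimed $\kappa_5(d+2)^2$. The paper instead interchanges the order of summation:
$\sum_{j\in\mathcal{I}_r(V_i)} S_j \le \sum_{j\in\mathcal{I}_r(V_i)}\sum_{\ell\in\mathcal{I}_r(V_j)}\I\{M_\ell=d\} = \sum_{\ell\in\mathcal{I}_{2r}(V_i)}\I\{M_\ell=d\}(M_\ell+1) \le (d+1)\kappa_5(d+1)$,
with a matching lower bound from $d\mapsto d+1$, and the interchange is precisely what removes the extra factor of $d$ that a ``count times maximum'' bound leaves behind.
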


\begin{proof}
For $s>0$, let 
\[
   \Gamma_{s,u}(x) \Def \mathcal{I}_{sr}(x) \cap \{ j \in [n]\colon\, M_j =u \}
\]
be the set of indices of all degree~$u$ vertices within distance~$sr$ of~$x$, so that 
\[
    \sum_{j \in \mathcal{I}_{sr}(x)} \I \{ M_i = u \} \Eq \card\{\Gamma_{s,u}(x)\}.
\]
To bound $\card\{\Gamma_{s,u}(x)\}$, first observe that 
\begin{equation}\label{OverEquiv}
   M_i \Eq \sum_{j \neq i} \I \{ B_{r/2}(V_i) \cap B_{r/2}(V_j) \neq \emptyset \};
\end{equation}
that is, $M_i$ is given by the number of \emph{$r/2$-grains\/} that intersect the $r/2$-grain of germ~$i$.
Now let~$\mathcal{R}_{s,u}(x)$ be a subset of $\Gamma_{s,u}(x)$ with maximal size such that its corresponding $r/2$-grains 
are pairwise disjoint.
Because the $r/2$-grains of elements in~$\mathcal{R}_{s,u}(x)$ are contained within $B_{sr+r/2}(x)$, 
we have $\card\{\mathcal{R}_{s,u}(x)\} \leq \kappa_{2s+1}$.
By the maximality of $\mathcal{R}_{s,u}(x)$, the $r/2$-grain of each germ in $\Gamma_{s,u}(x) \setminus \mathcal{R}_{s,u}(x)$ 
must intersect that of a germ in~$\mathcal{R}_{s,u}(x)$. Because the $r/2$-grain of each germ in $\mathcal{R}_{s,u}(x)$ 
intersects~$u$ other $r/2$-grains, we then have $\card\{\Gamma_{s,u}(x)\} \leq \kappa_{2s+1} (u+1)$, which implies that
\begin{equation}\label{BoGa2}
     \sum_{j \in \mathcal{I}_{sr}(x)} \I \{M_j =u \} \Le \kappa_{2s+1} (u+1).
\end{equation}
Applying \eqref{BoGa2} to expressions \eqref{ExpS}, \eqref{ExpH}, \eqref{ExpQ} and~\eqref{ExpE} yields the bounds on 
$S_j$, $H_i$, $Q_{ij}$ and~$E_{ij}$, respectively.

To bound $\sum_{j \in \mathcal{I}_{i,r}}S_j$, observe that
\begin{align*}
  \sum_{j \in \mathcal{I}_{i,r}}S_j 
         &\Le \sum_{j \in \mathcal{I}_{r}(V_i)} \sum_{\ell \in \mathcal{I}_{r}(V_j)} \I\{M_{\ell}=d\} 
    \Le \sum_{\ell \in \mathcal{I}_{2r}(V_i)} \I\{M_{\ell}=d\} \sum_{j \in \mathcal{I}_{r}(V_\ell)} 1 \\
    &\Eq \sum_{\ell \in \mathcal{I}_{2r}(V_i)} \I\{M_{\ell}=d\} (M_\ell+1) \Le  \kappa_{5} (d+1)^2.
\end{align*}
A corresponding lower bound can be obtained by applying the same arguments, but with~$d$ replaced by~$d+1$.
\end{proof}

We say that the \emph{radius\/} of a random variable indexed by~$i$ is the smallest value of~$\rho$ such that~$X_i$
is determined by the positions of $B_\rho(V_i)\cap\mathcal{V}$ relative to $V_i$. Observe that radii of $\I\{M_i=d\}$, $\pi^{(d)}_{M_i}$ and $\gamma^{(d)}_{M_i}$
are all equal to~$r$, that the radius of $S_i$ is~$2r$, that the radii of $H_i$ and $Q_{ij}$ (by \eqref{QFR}) are both~$3r$, 
and that the radius of $\sum_{j \in \mathcal{I}_{i,r}} E_{ij}$ is~$4r$. 
Because each of these random variables has a finite radius, we refer to them as \emph{local statistics\/}.
In light of Lemma~\ref{GGL3}, the next lemma demonstrates that the~$T$ arising in Theorem~\ref{BRRT}, when we use 
the size biased coupling described above, can be bounded by absolute values of centred variables of the following 
forms: sums of uniformly bounded local statistics ($T_{1},T_{2},T_{5},T_{6},T_{9},T_{11}$), $1/n$ times the products 
of sums of bounded local statistics ($T_3, T_{7},T_8, T_{10}$), and sums of terms which are only non-zero on the 
rare events $\I\{M_i > n/2\}$ ($T_4$).

\begin{lemma}\label{GGL1}
Using the notation defined above, we have 
\[
     \left| \mbE[GD| \mathcal{V} ] - \sigma^2 \right| \leq (1-b_d)\sum_{i=1}^{11} T_i,
\]
where $T_i := |T_i' - \ex T_i'|$, $1\le i\le 11$, and
\begingroup
\allowdisplaybreaks
\begin{align*}
  T_1' &\Eq q \sum_{i=1}^n \pi^{(d)}_{M_i} H_i; & %- \mbE \left[ \sum_{i=1}^n \pi_{M_i}^{(d)} H_i \right] \right| \\
  T_2' &\Eq q \sum_{i =1}^n \frac{\pi_{M_i}^{(d)} }{n-M_i -1} \sum_{j \in \hcI_{i,r}^c}  Q_{ij};  \\
  T_3' &\Eq \frac{2 q}{n} \sum_{i =1}^n \frac{n\pi_{M_i}^{(d)} \I\{ M_i \leq n/2\} }{2(n-M_i -1)} \sum_{j=1}^n S_j; &
         % - \mbE\left[ \sum_{i =1}^n \frac{n\pi_{M_i}^{(d)} \I\{ M_i \leq n/2\} }{2(n-M_i -1)} \sum_{j=1}^n S_j  \\
  T_4' &\Eq  q \sum_{i =1}^n \frac{\pi_{M_i}^{(d)} \I\{ M_i > n/2\} }{n-M_i -1} \sum_{j=1}^n S_j; \\ 
    %- \mbE \left[ \sum_{i =1}^n \frac{\pi_{M_i}^{(d)} \I\{ M_i > n/2\} }{n-M_i -1} \sum_{j=1}^n S_j  \right] \right| \\
  T_5' &\Eq q \sum_{i=1}^n \frac{\pi^{(d)}_{M_i}}{n-M_i-1} \sum_{j \in \mathcal{I}_{r}(V_i)} S_j ; &
     %- \mbE\left[ \sum_{i=1}^n \frac{\pi^{(d)}_{M_i}}{n-M_i-1} \sum_{j \in \mathcal{I}_{r}(V_i)} S_j\right] \right| \\
  T_6' &\Eq (1-q) \sum_{i=1}^n  \frac{ \gamma^{(d)}_{M_i}}{M_i}\sum_{j \in \mathcal{I}_{i,r}} S_j ; \\
     %- \mbE \left[\sum_{i=1}^n  \frac{ \gamma^{(d)}_{M_i}}{M_i}\sum_{j \in \mathcal{I}_{i,r}} S_j \right] \right| \\
  T_7' &\Eq \frac{ (1-q) \pi r^2}{n - \pi r^2} \, W_d \sum_{i=1}^n \gamma^{(d)}_{M_i} ; &
           %- \mbE \left[ W_d \sum_{i=1}^n \gamma^{(d)}_{M_i} \right] \right| \\
  T_8' &\Eq \frac{ (1-q) \pi r^2}{n - \pi r^2} \, W_{d-1} \sum_{i=1}^n \gamma^{(d)}_{M_i} ; \\
           %- \mbE \left[ W_{d-1} \sum_{i=1}^n \gamma^{(d)}_{M_i} \right] \right| \\
  T_9' &\Eq  \frac{ (1-q) \pi r^2}{n - \pi r^2} \sum_{i=1}^n \gamma^{(d)}_{M_i} H_i; &
       %- \mbE \left[ \sum_{i=1}^n \gamma^{(d)}_{M_i} H_i  \right] \right|  \\
  T_{10}' &\Eq \frac{ (1-q)}{n - \pi r^2}  Y_d \sum_{i=1}^n \gamma^{(d)}_{M_i}; \\
        %- \mbE \left[ Y_d \sum_{i=1}^n \gamma^{(d)}_{M_i} \right] \right| \\
  T_{11}' &\Eq  (1-q) \sum_{i=1}^n \frac{\gamma^{(d)}_{M_i}}{M_i} \sum_{j \in \mathcal{I}_{i,r}}E_{ij}, 
       %- \mbE \left[ \sum_{i=1}^n \frac{\gamma^{(d)}_{M_i}}{M_i} \sum_{j \in \mathcal{I}_{i,r}} E_{ij} \right] \right|,
\end{align*}
\endgroup
adopting the convention that summands with zero numerator are zero, for example if $M_i=0$ in~$T_{11}$.
\end{lemma}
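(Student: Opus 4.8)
The plan is to compute $\mbE[GD \mid \mathcal{V}]$ explicitly by conditioning on the extra randomness in the size biased coupling, and then to recognise the resulting sum, term by term, as $(1-b_d)$ times a signed combination of the $T_l'$. First recall that for the Stein coupling $(W_d, W_d^s, \mu_d)$ we have $G = \mu_d = n(1-b_d)$ and $D = W_d^s - W_d$, so $\mbE[GD\mid\mathcal{V}] = \mu_d\,\mbE[D\mid\mathcal{V}]$. Conditioning further on $I$ and then on $X$ and $J$: given $\mathcal{V}$ and $I = i$, the construction via Lemma~\ref{Gold21} gives $\pr[X = 1 \mid \mathcal{V}, I = i] = q\pi^{(d)}_{M_i}$ and $\pr[X = -1 \mid \mathcal{V}, I=i] = (1-q)\gamma^{(d)}_{M_i}$; when $X = 1$, $J$ is uniform on $\hcI^c_{i,r}$ (of size $n-M_i-1$) and, by the very definition of $A_{ij}$ leading to \eqref{AExp}, $\mbE[D\mid\mathcal{V},I=i,X=1,J=j] = A_{ij}$; when $X=-1$, $J$ is uniform on $\mathcal{I}_{i,r}$ (of size $M_i$) and $\mbE[D\mid\mathcal{V},I=i,X=-1,J=j] = R_{ij}$ by \eqref{RExp}; when $X = 0$, $D = 0$. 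Averaging over $I$ and over $J$, and absorbing the factor $\mu_d/n = 1-b_d$, I obtain
\[
  \mbE[GD\mid\mathcal{V}] = (1-b_d)\sum_{i=1}^n\left[\frac{q\pi^{(d)}_{M_i}}{n-M_i-1}\sum_{j\in\hcI^c_{i,r}}A_{ij} + \frac{(1-q)\gamma^{(d)}_{M_i}}{M_i}\sum_{j\in\mathcal{I}_{i,r}}R_{ij}\right],
\]
with the convention that a summand with vanishing denominator (i.e.\ $M_i\in\{0,n-1\}$) is zero, consistently, since then one of $\hcI^c_{i,r}$, $\mathcal{I}_{i,r}$ is empty and the numerator vanishes too.

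Next I would substitute $A_{ij} = H_i + S_j + Q_{ij}$ from \eqref{AExp} and the expression for $R_{ij}$ from \eqref{RExp}, and expand. In the $\pi$-sum, $\sum_{j\in\hcI^c_{i,r}}H_i = (n-M_i-1)H_i$ gives $T_1'$; the identity $\mathcal{I}_r(V_i) = \mathcal{I}_{i,r}\cup\{i\}$ gives $\sum_{j\in\hcI^c_{i,r}}S_j = \sum_{j=1}^n S_j - \sum_{j\in\mathcal{I}_r(V_i)}S_j$, and splitting the $\sum_{j=1}^nS_j$ piece according to whether $M_i\le n/2$ or $M_i>n/2$ yields $T_3'$ and $T_4'$, while the $\sum_{j\in\mathcal{I}_r(V_i)}S_j$ piece yields $-T_5'$; the remaining $\sum_{j\in\hcI^c_{i,r}}Q_{ij}$ gives $T_2'$. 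In the $\gamma$-sum, $\sum_{j\in\mathcal{I}_{i,r}}S_j$ gives $T_6'$, the part of \eqref{RExp} that is constant in $j$, namely $\frac{\pi r^2}{n-\pi r^2}(W_d - W_{d-1} - H_i) - \frac{1}{n-\pi r^2}Y_d$, multiplied by $M_i$ and cancelled against $1/M_i$, gives $T_7'$, $-T_8'$, $-T_9'$ and $-T_{10}'$, and $\sum_{j\in\mathcal{I}_{i,r}}E_{ij}$ gives $T_{11}'$. Hence $\mbE[GD\mid\mathcal{V}] = (1-b_d)\bigl(T_1' + T_2' + T_3' + T_4' - T_5' + T_6' + T_7' - T_8' - T_9' - T_{10}' + T_{11}'\bigr)$.

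Finally, since $(W_d,W_d^s,\mu_d)$ is a Stein coupling, taking $f(w) = w$ in \eqref{SteCouDef} with $R=0$ gives $\sigma^2 = \Var(W_d) = \mbE[GD]$; taking expectations in the previous display identifies $\sigma^2$ as $(1-b_d)$ times the same signed combination of the $\mbE T_l'$. Subtracting, using $|{\pm}(T_l' - \mbE T_l')| = T_l$, and applying the triangle inequality gives $|\mbE[GD\mid\mathcal{V}] - \sigma^2| \le (1-b_d)\sum_{l=1}^{11}T_l$. I expect essentially no conceptual obstacle here, since the delicate identities \eqref{AExp} and \eqref{RExp} — with their correction terms $Q_{ij}$, $E_{ij}$ accounting for the change in $H_i$ or $K_i$ when $V_j$ is removed — are already in place before the statement; the only real care is the bookkeeping of signs and of the decomposition of $[n]$ into $\{i\}$, $\mathcal{I}_{i,r}$, $\hcI^c_{i,r}$, and seeing why each $T_l'$ is written in its exact (sometimes slightly redundant) form. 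Those forms are chosen so that every $T_l'$ is, up to centring, either a sum of uniformly bounded local statistics, $1/n$ times a product of two such sums — here one uses $n-M_i-1\ge n/2-1$ on $\{M_i\le n/2\}$ to see that $\frac{n\pi^{(d)}_{M_i}}{2(n-M_i-1)}$ is bounded — or a sum supported on the rare event $\{M_i > n/2\}$, which is precisely what makes the subsequent moment bounds via Theorem~\ref{Yuting} go through.
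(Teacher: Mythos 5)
Your proposal is correct and follows essentially the same route as the paper's proof: condition on $I$, $X$, $J$ to write $\mbE[GD\mid\mathcal{V}]$ as $(1-b_d)$ times a sum over $i$ of the $A_{ij}$- and $R_{ij}$-averages, substitute~\eqref{AExp} and~\eqref{RExp}, rearrange the $\sum_{j\in\hcI^c_{i,r}}S_j$ piece by splitting off $\sum_{j\in\mathcal{I}_r(V_i)}S_j$ and by the indicator $\I\{M_i\le n/2\}$, then use $\sigma^2=\mbE[GD]$ and the triangle inequality.
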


\begin{proof} 
Applying \eqref{AExp} and \eqref{RExp} in \eqref{eqn:46} and \eqref{eqn:410}, respectively, we obtain
\begin{align}
   \mbE(G&D \giv \mathcal{V}) \Eq \mu \mbE(W^s_d - W_d \giv \mathcal{V}) \nonumber \\
  &\Eq n(1-b_d) \sum_{i=1}^n \frac{1}{n}\left[\frac{q \pi^{(d)}_{M_i}}{n-M_i-1}\sum_{j \in \hcI_{i,r}^c} A_{ij} 
          + \frac{(1-q) \gamma^{(d)}_{M_i}}{M_i} \sum_{j \in \mathcal{I}_{i,r}} R_{ij}\right]\nonumber \\
  &\Eq (1-b_d)q\, \sum_{i=1}^n \frac{ \pi^{(d)}_{M_i}}{n-M_i-1}\sum_{j \in \hcI_{i,r}^c} (H_i +S_j +Q_{ij})
         \label{eqn:46}\\
   \begin{split}\label{eqn:410}
	&\qquad\mbox{} + (1-b_d)(1-q)\sum_{i=1}^n \frac{ \gamma^{(d)}_{M_i}}{M_i} 
             \sum_{j \in \mathcal{I}_{i,r}} \bbclr{S_j + \frac{\pi r^2}{n-\pi r^2} (W_d - W_{d-1}-H_i)  \\
	&\hspace{10cm}\mbox{} -\frac{1}{n-\pi r^2}Y_d + E_{ij} }                                                                                    
   \end{split} \\
  &\Eq (1-b_d)q \biggl[\sum_{i=1}^n \pi^{(d)}_{M_i} H_i  + \sum_{i =1}^n \frac{\pi_{M_i}^{(d)} }{n-M_i -1} 
       \sum_{j \in \hcI_{i,r}^c} S_j+  \sum_{i =1}^n \frac{\pi_{M_i}^{(d)} }{n-M_i -1} 
                                       \sum_{j \in  \hcI_{i,r}^c}  Q_{ij}\biggr] \label{deqz2}\\
  &\qquad\mbox{} + (1-b_d)(1-q) \biggl[\sum_{i=1}^n \frac{ \gamma^{(d)}_{M_i}}{M_i} \sum_{j \in \mathcal{I}_{i,r}} S_j 
              + \frac{\pi r^2(W_d - W_{d-1})}{n-\pi r^2}\sum_{i=1}^n \gamma^{(d)}_{M_i}\nonumber  \\
  &\qquad\qquad\qquad\mbox{} - \frac{\pi r^2}{n-\pi r^2}\sum_{i=1}^n \gamma^{(d)}_{M_i} H_i 
       - \frac{Y_d}{n-\pi r^2} \sum_{i=1}^n \gamma^{(d)}_{M_i} + 
           \sum_{i=1}^n \frac{ \gamma^{(d)}_{M_i}}{M_i} \sum_{j \in \mathcal{I}_{i,r}} E_{ij} \biggr]. \nonumber 
\end{align}
We rearrange the second term in \eqref{deqz2} to obtain
\begin{align*}
    \sum_{i =1}^n \frac{\pi_{M_i}^{(d)} }{n-M_i -1} \sum_{j \in \hcI_{i,r}^c} S_j 
    &  \Eq  \sum_{i =1}^n \frac{\pi_{M_i}^{(d)} }{n-M_i -1} \left( \sum_{j=1}^n S_j -  
                                              \sum_{j \in \mathcal{I}_{r}(V_i)} S_j \right) \\[1ex]
    &\Eq \sum_{i =1}^n \frac{\pi_{M_i}^{(d)} \I\{ M_i \leq n/2\} }{n-M_i -1} \sum_{j=1}^n S_j  
                 + \sum_{i =1}^n \frac{\pi_{M_i}^{(d)} \I\{ M_i > n/2\} }{n-M_i -1} \sum_{j=1}^n S_j
  \\  &\qquad\mbox{} 
    - \sum_{i =1}^n \frac{\pi_{M_i}^{(d)} }{n-M_i -1} \sum_{j \in \mathcal{I}_{r}(V_i)} S_j.
\end{align*}
After observing that $\sigma^2=\mbE[GD]$, the result follows by applying the triangle inequality. 
\end{proof}

The next lemma is used to show that the quantities~$T_i'$ are weakly concentrated about their means. 
Note that, in view of Lemma~\ref{GGL3}, we only apply this result when $\beta=0$; however, proving the result in the 
more general form stated below requires little additional effort.

\begin{lemma}\label{GGL2}
Let $\mathrm{U}$ be a real-valued function on finite subsets of~$C_n$ with a distinguished point~$v$, whose value is determined
by the positions of points relative to~$v$. Suppose also that there 
exist constants $c,\beta \geq 0$ such that, for any set $A \subset C_n$ with distinguished point, we have
\begin{equation}\label{Vbound}
             |\mathrm{U}(A,v)| \leq c\, \card\{A\}^\beta.
\end{equation}
Fixing $s>0$, define
\[
   X_i \Def \mathrm{U}(\mathcal{V} \cap B_{i,s}, V_i )\qquad\mbox{and} \qquad W \Def \sum_{i=1}^n X_i.
\]
Then, for any $q\in \IN$, 
\[
    n^{-1/2} \norm{W-\mbE W}_q \Le 2\sqrt{6} c[ C_A \max\{9 \pi s^2, q(1+\beta) \}+1]^{\beta + 3/2},
\]
where $C_A:= \pi e^{e-2}/\log(e-1)$.
\end{lemma}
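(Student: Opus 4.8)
The plan is to prove Lemma~\ref{GGL2} by exhibiting a Stein coupling for~$W$ and then invoking the central-moment inequality of Theorem~\ref{Yuting}, exactly as was done for the \ER\ analogue in Lemma~\ref{ERwen}. First I would construct a size-biased coupling $(W,W^s)$ for~$W$ adapted to the germ--grain configuration: pick an index $I$ uniformly from~$[n]$, resample the position of~$V_I$ (or, depending on the precise structure used in \cite{Bar18}, recondition on the relevant event at location~$I$) independently of everything else, and let $W^s$ be the resulting sum. Since $(W,W^s,\mu)$ is then a Stein coupling with $G = \mu$ constant, Theorem~\ref{Yuting} gives, for each $r' \in \IN$ with the monotonicity hypothesis $\mbE|W^s - \mu|^{r'} \le \mbE|W-\mu|^{r'}$ verified (or circumvented by a standard symmetrization/truncation, cf.\ the discussion in \cite{Bar18}),
\[
   \norm{W-\mu}_{r'} \Le \sqrt{2(r'-1)\,\norm{\mu}_{r'}\,\norm{W^s - W}_{r'}} \Eq \sqrt{2(r'-1)\,\mu\,\norm{W^s-W}_{r'}},
\]
so everything reduces to bounding $\mu = \mbE W$ and the moments $\norm{W^s - W}_{r'}$ of the coupling increment.

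The next step is to bound $\mu$ and $\norm{W^s-W}_{r'}$. Writing $X_i = \mathrm{U}(\mathcal{V}\cap B_{i,s},V_i)$, the bound~\eqref{Vbound} gives $|X_i| \le c\,N_s(V_i)^\beta$ with $N_s(\cdot)$ as in~\eqref{Ndef}, and $N_s(V_i)-1 \sim \Bi(n-1,\pi s^2/n)$, so $\mbE X_i^2$ is $\bigo(1)$ uniformly and $\mu = \sum_i \mbE X_i = \bigo(n)$; thus $\mu \le c' n$ for a constant $c'$ depending on $c,\beta,s$ through moments of a $\Po(\pi s^2)$-type law (the $C_A \max\{9\pi s^2,\dots\}$ form of the final bound signals that these moments are being controlled by a Chernoff/Stirling estimate of the kind in \cite[Appendix]{Bar18}, where $C_A = \pi e^{e-2}/\log(e-1)$ appears). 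For the increment: when we resample~$V_I$, only the summands $X_j$ with $j$ such that $B_{j,s}$ is affected — i.e.\ those within distance $2s$ of the old or new position of $V_I$ — can change, and each such change is at most $2c\,(\text{local count})^\beta$. Hence $|W^s - W| \le 2c \sum_{j : D(V_j,\,\cdot\,)\le 3s} N_s(V_j)^\beta$ summed over the at most $N_{2s}$-many relevant $j$, a local quantity whose $r'$-th moment is controlled, again via a $\Po(9\pi s^2)$-type tail bound, by a constant times $[\,C_A\max\{9\pi s^2, r'(1+\beta)\}+1\,]^{2\beta+1}$ or so. Feeding $\mu \le c'n$ and this increment bound into Theorem~\ref{Yuting} and taking square roots produces $n^{-1/2}\norm{W-\mbE W}_{r'} \le 2\sqrt6\, c\,[\,C_A\max\{9\pi s^2, r'(1+\beta)\}+1\,]^{\beta+3/2}$, matching the stated exponent $\beta + 3/2$ once the $\tfrac12$ from the square root combines with a $\beta + 1$ from the moment of the increment plus a $\tfrac12$ from the $\sqrt{r'-1}$ factor.

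The main obstacle I anticipate is making the dependence on $q$ (here called $r'$) completely explicit and clean: the increment $W^s-W$ is a sum of a random number — itself $\Bi$-distributed, hence with Poisson-like tails — of bounded-per-term contributions, so bounding its $r'$-th moment requires a moment bound for (powers of) such local counts that is polynomial in $r'$ with the right power, and this is precisely where one must use a sharp form of Stirling's inequality / Chernoff bound rather than a crude one, to land on the exponents $\beta+3/2$ and the constant $C_A$ rather than something worse. A secondary technical point is verifying (or side-stepping) the hypothesis $\mbE|W^s-\mu|^{r'}\le\mbE|W-\mu|^{r'}$ of Theorem~\ref{Yuting}; the standard device, as in \cite{Bar18}, is to note this holds for the size-biased coupling built from resampling, or else to apply the inequality to a symmetrized version and absorb the loss into constants. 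Apart from these points the argument is routine bookkeeping, parallel to the proof of Lemma~\ref{ERwen} already cited from \cite[Theorem~4.2]{Bar18}, with graph-distance neighbourhoods replaced by Euclidean $s$-balls and vertex-degree moments replaced by moments of $N_s(\cdot)$.
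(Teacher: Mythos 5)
You have the right high-level template (exhibit a Stein coupling, feed it into Theorem~\ref{Yuting}, then bound the $G$-norm and the increment norm via binomial/Poisson-type moment bounds), but the coupling you propose is not one the argument can run on, and this is the crux of the matter rather than routine bookkeeping.

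First, ``resample the position of~$V_I$ and let $W^s$ be the resulting sum'' gives a $W'$ with $\law(W') = \law(W)$ (the $V_i$ are i.i.d.), and $(W, W', \mu)$ is not a Stein coupling: there is no reason for $\mbE[(W-\mu)f(W)] = \mu\,\mbE[f(W')-f(W)]$ to hold. Nor is it a size-biased coupling; indeed $W$ need not be nonnegative here, since $\mathrm{U}$ is only assumed bounded in absolute value, so the size-biased distribution of~$W$ need not even exist. You flag the monotonicity hypothesis $\mbE|W^s-\mu|^r \le \mbE|W-\mu|^r$ as a ``secondary technical point,'' but with a genuine size-biased coupling it is not generally true (size-biasing shifts mass toward large values), and ``circumvented by symmetrization/truncation'' is not a proof. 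The paper avoids both problems at once by not using size-biasing at all: it takes $J$ uniform and constructs $\widetilde\cV^{(J)}$ by re-randomizing \emph{all} of $\cV\cap B_{J,s}$ to uniform on $C_n$, independently re-injecting each other point into $B_{J,s}$ with probability $\pi s^2/n$, and keeping $V_J$ fixed. This specific recipe makes $X_J$ independent of $\widetilde W^{(J)}$, so that $(W,\widetilde W,\tG)$ with $\tG := -n(X_J - \mbE X_J)$ is an exact Stein coupling; and because $\law(\widetilde W)=\law(W)$, the moment hypothesis of Theorem~\ref{Yuting} holds with \emph{equality}, with no side conditions. In particular $G$ is random, not the constant~$\mu$, and $\norm{\tG}_q$ carries an extra factor of order $[\cdot]^\beta$, which is what makes the final exponent $\beta+3/2$ come out (your own exponent accounting, ``$[\cdot]^{2\beta+1}$ or so,'' does not reproduce $\beta+3/2$ once you divide by $\sqrt n$, which is another symptom that the constant-$G$ ansatz is not the one being used). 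Finally, the bound on $\norm{\tD}_q$ is not a one-line ``local count'' estimate: the coupling moves points in both directions, and the paper decomposes $\widetilde W - W$ into six sums (\eqref{DE1}--\eqref{DE3}), introduces an auxiliary configuration $\widehat\cV^{(1)}$ to break an inconvenient dependence on $|\cI_{1,s}|$, and invokes the random-sum Lemma~\ref{Branch1} together with the binomial moment Lemma~\ref{BinMom} separately on each piece. That careful decomposition, together with the choice of Stein coupling, is the substantive content of the proof; the rest really is bookkeeping.
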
 

\begin{proof}
To obtain moment bounds for~$W$, we use Theorem~\ref{Yuting}, together with a suitable Stein coupling that makes use
of the dependence structure. %We let 
For each $j = 1, \dots, n$ we generate a new configuration 
$\widetilde{ \mathcal{V}}^{(j)}=\{\widetilde{V}^{(j)}_1 , \dots, \widetilde{V}^{(j)}_n \}$ from~$\mathcal{V}$. 
To generate $\widetilde{\mathcal{V}}^{(j)}$, for each $i \in \cI_{j,s}$ we let $\widetilde{V}_i^{(j)}$ be uniformly 
distributed on~${C}_n$, independently of everything else; for each 
$i \in \hcI^c_{j,s}=\{1,\dots, n\} \setminus (\mathcal{I}_{j,s} \cup \{j\})$, we let $\widetilde{V}_i^{(j)}=V_i$ with 
probability $1-\pi s^2/n$, and otherwise let $\widetilde{V}_i^{(j)}$ be uniformly distributed on $B_{j,s}$, where both 
randomizations occur independently of everything else; finally, we let $\widetilde{V}^{(j)}_j=V_j$.
Let~$J$ be uniform on the set $\{1, \dots, n \}$, independently of the random objects above, and define 
$\widetilde W:= \widetilde{W}^{(J)}$ and $\tG :=-n(X_J - \mbE X_J)$. 
To show that $(W, \widetilde W, \tG)$ is an exact Stein coupling, first observe that for each $j \in [n]$,  $\{ V_i : i \in \mathcal{I}_{j,s} \}$ and~$\widetilde{\mathcal{V}}^{(j)}$ are conditionally independent given $V_j$. Therefore,~$X_j$ and~$\wt W^{(j)}$ are also conditionally independent given~$V_j$.   Since~$X_j$ depends 
on the positions of the points of$~\cV$ only \emph{relative} to~$V_j$, it follows that $\law(X_j| V_j)=\law(X_j)$, and thus combining the arguments above, that $X_j$ is independent of $\widetilde W^{(j)}$. Consequently,
\begin{align*}
\mbE[\tG( &f(\tW)-f(W))] = \mbE[-n (X_J - \mbE X_J) (f(\tW^{(J)}) -f(W))] \\
&= \sum_{j =1}^n -\mbE[(X_j - \mbE X_j) (f(\tW^{(j)}) -f(W))] = \mbE[ (W- \mbE W) f(W) ];
\end{align*}
that is, $(W, \widetilde W, \tG)$ is an exact Stein coupling. 
Moreover,
because $\mathcal{L}(\widetilde W) = \mathcal{L}(W)$, the central moments of $\widetilde W$ and~$W$ are equal, and hence 
Theorem \ref{Yuting} applies.

%After noting that given $V_j$,~$X_j$ is conditionally independent 
%of~$\widetilde{W}^{(j)}$, and also that $\mbE[X_j| V_j]=\mbE[X_j]$,  it is straightforward to see that $(W, \widetilde W, \tG)$ is an exact Stein coupling. Moreover,
%because $\mathcal{L}(\widetilde W) = \mathcal{L}(W)$, the central moments of $\widetilde W$ and~$W$ are equal, and hence 
%Theorem \ref{Yuting} applies.

To apply Theorem \ref{Yuting} to bound the $q$-th central moment, we need to bound 
\begin{equation}\label{GGG}
   \norm{\tG}^q_q \Eq n^{q-1} \sum_{i=1}^n \mbE|X_i - \mbE X_i |^q,
\end{equation}
and, for $\tD := \tW - W$, 
\begin{align}
    \norm{\tD}^q_q &\Eq \frac{1}{n} \sum_{j=1}^n \mbE | \widetilde{W}^{(j)} -W |^q \nonumber \\
    &\Eq \frac{1}{n} \sum_{j=1}^n \mbE \Bigl| \sum_{i=1}^n (\widetilde{X}^{(j)}_i - X_i ) \Bigr|^q 
          \Eq \mbE \Bigl| \sum_{i=1}^n (\widetilde{X}_i^{(1)}-X_i) \Bigr|^q. \label{GGD}
\end{align}

To bound \eqref{GGG}, note that, in view of \eqref{Vbound} and Lemma~\ref{BinMom},
\begin{equation}\label{GGG1}
   \norm{X_i}_q \Le c \norm{N_s(V_i)^\beta}_q \Eq c \norm{N_s(V_i)}_{\beta q}^\beta 
                \Le c [C_A \max\{\pi s^2, q \beta\}+1]^{\beta}.
\end{equation}
Using \eqref{GGG1} and Minkowski's inequality, this gives
\[
   \mbE| X_i - \mbE X_i |^q \Eq \norm{ X_i - \mbE X_i}^q_q \leq (\norm{X_i}_q + \norm{X_i}_1 )^q 
                            \Le (2c)^q [C_A \max\{\pi s^2, q \beta\}+1]^{\beta q}.
\]
Thus, from \eqref{GGG}, we can bound 
\begin{equation}\label{GBD}
     \norm{\tG}_q \Le 2 n c(C_A \max\{\pi s^2, q \beta\}+1)^{\beta}.
\end{equation}

To bound \eqref{GGD}, for $\ell >0$, let 
$$
   \widetilde{\mathcal{I}}^{(1)}_{j,\ell} 
          \Def \left\{ i \neq j: \widetilde V^{(1)}_i \in  B_\ell(\widetilde V_j^{(1)}) \right\}
$$ 
contain the indices of the germs located in $B_\ell (\widetilde V^{(1)}_j)$ in configuration $\widetilde{\mathcal{V}}^{(1)}$. 
Observe that $|X_i-\widetilde X^{(1)}_i| = 0$ if  germ~$i$ is located $(i)$ at least~$2s$ units from~$V_1$, $(ii)$  
at least~$s$ units from  the previous location of a germ that has been moved inside~$B_s(V_1)$, and $(iii)$ at least~$s$ 
units from any germ which has been moved outside~$B_s(V_1)$.
 
We define $\cM$ as follows, so that $\{1, \dots, n\} \setminus \mathcal{M}$ contains the indices of such germs:
\begin{align}
    \mathcal{M} &\Eq \left( \{1\} \cup \mathcal{I}_{1,2s} \right)  
           \bigcup \left( \cup_{i \in \widetilde{\mathcal{I}}^{(1)}_{1,s}} \mathcal{I}_s(V_i) \right)  
              \bigcup \left( \cup_{i \in \mathcal{I}_{1,s}} \mathcal{I}_{s}(\widetilde V_i^{(1)}) \right)\nonumber \\
   &\Eq \left( \{1\} \cup \mathcal{I}_{1,2s} \right)  
               \bigcup \left( \cup_{i \in \widetilde{\mathcal{I}}^{(1)}_{1,s}} (\mathcal{I}_{i,s} \cup \{i\}) \right)  
               \bigcup \left( \cup_{i \in \mathcal{I}_{1,s}} \widetilde{\mathcal{I}}^{(1)}_{i,s} \right) \label{Mexp1} \\
   &\Eq \left( \{1\} \cup \widetilde{\mathcal{I}}^{(1)}_{1,2s} \right) 
           \bigcup \left( \cup_{i \in {\mathcal{I}}_{1,s}} (\widetilde{\mathcal{I}}^{(1)}_{i,s} \cup \{i\}) \right) 
               \bigcup \left( \cup_{i \in \widetilde{\mathcal{I}}^{(1)}_{1,s}} {\mathcal{I}}_{i,s} \right), \label{Mexp2} 
\end{align}
where the last equality is  by considering the the groups $(i), (ii), (iii)$ relative to the new configuration $\wt\cV^{(1)}$.
Even though these sets are equivalent, we think of \eqref{Mexp1} as~$\mathcal{M}$ and \eqref{Mexp2} 
as~$\widetilde{\mathcal{M}}^{(1)}$. We now have
\begin{align}
 \left| \sum_{i=1}^n (\widetilde X_i^{(1)}-X_i) \right| 
        &\Eq \left| \sum_{i\in \mathcal{M}} (X_i - \widetilde X_i^{(1)}) \right| 
          \Le \sum_{i\in \mathcal{M}} |X_i| +  \sum_{i\in \widetilde{\mathcal{M}}^{(1)}} |\widetilde X^{(1)}_i|  \non \\
    &\Le \sum_{i \in \mathcal{I}_{1,2s} \cup \{1\}} |X_j| + 
             \sum_{i \in \widetilde{\mathcal{I}}^{(1)}_{1,2s}\cup \{1\}} |\widetilde X_j^{(1)}| \label{DE1} \\
   &\quad\mbox{} +  \sum_{i \in \widetilde{\mathcal{I}}^{(1)}_{1,s}} \sum_{j \in \mathcal{I}_{i,s}\cup \{i\}} |X_j|
          + \sum_{i \in \mathcal{I}_{1,s}} \sum_{j \in \widetilde{\mathcal{I}}^{(1)}_{i,s} \cup \{i\}} |\widetilde X_j^{(1)}| 
               \label{DE2}\\
   &\quad\mbox{}  + \sum_{i \in \mathcal{I}_{1,s}} 
      \sum_{j \in \widetilde{\mathcal{I}}^{(1)}_{i,s}\setminus (\mathcal{I}_{1,2s} \cup \{1\})} |X_j| 
          + \sum_{i \in \widetilde{\mathcal{I}}^{(1)}_{1,s} } 
        \sum_{j \in \mathcal{I}_{i,s}\setminus (\widetilde{\mathcal{I}}^{(1)}_{1,2s} \cup \{1\})} |\widetilde X^{(1)}_j| 
               \label{DE3},
 \end{align}
where we remove some double counting in \eqref{DE3}.
By considering the process of generating $\wt\cV^{(1)}$ from $\cV$ in reverse, observe that the left and right hand 
terms of \eqref{DE1}--\eqref{DE3} have the same distributions, and thus we only need to bound the $q$th-moment of one 
term of each.

To bound the first term of \eqref{DE1}, observe that, for each $i \in \mathcal{I}_{1,2s}$,
\[
    |X_i| \Le c (N_{s}(V_i))^\beta \Le  c( N_{3s}(V_1) )^\beta,
\] 
and that there are $N_{2s}(V_1)$ such indices; hence
\begin{equation*}
      \sum_{i \in \mathcal{I}_{1,2s} \cup \{1\}} |X_j| \Le c  N_{2s}(V_1)  (N_{3s}(V_1) )^\beta 
                     \Le c (N_{3s}(V_1) )^{\beta+1} ,
\end{equation*}
implying that
\begin{align}
   \bbbnorm{ \sum_{i \in \mathcal{I}_{1,2s} \cup \{1\}} |X_j|}_q &\Le c\norm{ (N_{3s}(V_1))^{\beta+1}}_{q} 
    \Eq c \norm{N_{3s}(V_1) }_{(\beta+1)q}^{\beta+1}. \nonumber 
\end{align}
Because $\mathcal{L}(N_{3s}(V_1)) = Bi(n-1, 9 \pi s^2/n)+1$, we may then apply Lemma~\ref{BinMom} to the right hand side of this last display to obtain
\begin{align}
  \bbbnorm{ \sum_{i \in \mathcal{I}_{1,2s} \cup \{1\}} |X_j|}_q &\Le c (C_A \max \{9 \pi s^2, (\beta+1)q \}+1)^{\beta+1}.
                 \label{DE1B}
\end{align}
As previously mentioned, this also bounds the second term of~\eq{DE1}.

To bound the second term of \eqref{DE2}, for $x \in C_n$, let $\widetilde N_s^{(1)}(x)$ be the number of points 
in configuration $\widetilde{\mathcal{V}}^{(1)}$ that fall in $B_{s}(x)$. Observe that, by the same arguments as for~\eq{DE1}, 
\[
     \sum_{i \in \mathcal{I}_{1,s}} \sum_{j \in \widetilde{\mathcal{I}}^{(1)}_{i,s} \cup \{i\}} |\widetilde X_j^{(1)}| 
                \Le \sum_{i \in \mathcal{I}_{1,s}} c(\widetilde{N}^{(1)}_{2s}(\widetilde V^{(1)}_i))^{\beta+1}.
\]
By construction, $(\widetilde{N}^{(1)}_{2s}(\widetilde V^{(1)}_i))_{i\in \mathcal{I}_{1,s}}$ has the same distribution
as the counts of the first (say) $\abs{\cI_{1,s}}$ $2s$-neighbourhoods in an independent uniform $n$-configuration, and so  we can  apply Lemma \ref{Branch1}. Applying Lemmas \ref{Branch1} and \ref{BinMom} thus gives
\begin{align}
   \bbbnorm{\sum_{i \in \mathcal{I}_{1,s}} c(\widetilde{N}^{(1)}_{2s}(\widetilde V^{(1)}_i))^{\beta+1}}_q 
      &\Le c\norm{N_{s}(V_1)-1}_q\norm{(\widetilde{N}^{(1)}_{2s}(\widetilde V^{(1)}_i))^{\beta+1}}_q  \nonumber \\
      &\Le c (C_A \max \{4 \pi s^2, (\beta+1)q \}+1)^{\beta+2} .  \label{DE2B}
\end{align}

To bound the first term of  \eqref{DE3}, first note that, for each $i \in \mathcal{I}_{1,s}$ and 
$j \in \mathcal{\widetilde I}^{(1)}_{i,s} \setminus \mathcal{I}_{1,2s}$, we have 
$|X_j| \leq c (N_{2s}(\widetilde V^{(1)}_i))^\beta$ (recall that $N_{2s}(\widetilde V^{(1)}_i)$ gives the number 
of germs that fall in $B_{2s}(\widetilde V^{(1)}_i)$ \emph{in configuration\/} $\mathcal{V}$, \emph{not\/} 
$\mathcal{\widetilde V}^{(1)}$). However, because $N_{2s}(\widetilde V^{(1)}_i)$ and $\mathcal{I}_{1,s}$ are dependent 
(to see why, consider the distribution of $N_{2s}(\widetilde V^{(1)}_i)$ given $\card\{\mathcal{I}_{1,s}\}=n$), 
we are unable to apply Lemma~\ref{Branch1} directly.
To construct a bound on $\sum_{j\in\mathcal{\widetilde I}^{(1)}_{i,s} \setminus (\mathcal{I}_{1,2s}\cup \{1\})} |X_j|$ 
that is independent of $\mathcal{I}_{1,s}$, first note that, for each $i \in \mathcal{I}_{1,s}$ and 
$j \in \mathcal{\widetilde I}^{(1)}_{i,s} \setminus (\mathcal{I}_{1,2s}\cup\{1\})$, we have 
\[
    |X_j| \Le c (N_s(V_j))^\beta \Eq c\,\card\{\mathcal{I}_{s}(V_j) \setminus \mathcal{I}_{s}(V_1)\}^\beta,
\]
since $\cI_s(V_j)\cap\cI_s(V_1) = \emptyset$ when $J \notin \cI_{1,2s}\cup\{1\}$.  Then, because 
$j \in \mathcal{\widetilde I}^{(1)}_{i,s}$, $V_j \in B_s(\tV_i^{(1)})$, and so $\cI_s(V_j) \subset \cI_{2s}(\tV_i^{(1)})$;
hence
\[
   |X_j| \Le c\,\card\{\mathcal{I}_{2s}(\widetilde V^{(1)}_i) \setminus \mathcal{I}_{s}(V_1)\}^\beta.
\]

We now construct a new configuration, $\widehat {\mathcal{V}}^{(1)}=\{\widehat V^{(1)}_1, \dots, \widehat V^{(1)}_n \}$,
from $\mathcal{V}$, such that, for $i \in \mathcal{I}^c_{1,s}$, $\widehat V^{(1)}_i = V_j$ and, for 
$i \in \mathcal{I}_{1,s} \cup \{1\}$, $\widehat V^{(1)}_i$ is distributed uniformly on $C_n \setminus B_{i,s}$, 
independently of everything else. Extending our notation naturally, we now have 
\[
   |X_j| \Le c\,\card\{ \mathcal{I}_{2s}(\widetilde V^{(1)}_i) \setminus \mathcal{I}_{s}(V_1)\}^\beta 
            \Le c(\widehat N_{2s}(\widetilde V^{(1)}_i))^\beta.
\]
By construction, $\widehat N_{2s}(\widetilde V^{(1)}_i)$ is independent of $\mathcal{I}_{1,s}$ and is stochastically 
dominated by the distribution $\Bi(n, 4\pi s^2/(n-4\pi s^2))$.
We may now apply Lemmas \ref{Branch1} and~\ref{BinMom} to obtain
\begin{align}
   \bbbnorm{\sum_{i \in \mathcal{I}_{1,s}} \sum_{j \in \mathcal{I}^{(1)}_{i,s}\setminus \mathcal{I}_{1,2s} } |X_j|}_q 
    &\Le c\norm{N_s(V_1)-1}_q\, \norm{\widehat{N}_{2s}(V^{(1)}_\ell)}^{\beta+1}_{q(\beta+1)} \nonumber \\
    &\Le c (C_A \max \{4 \pi s^2, (\beta+1)q \}+1)^{\beta+2}.  \label{DE3B}
\end{align}

Combining \eqref{GGD} with \eqref{DE1}--\eqref{DE3B} we obtain
\begin{align}
    \norm{\tD}_q &\Le 6c (C_A \max \{9 \pi s^2, (\beta+1)q \}+1)^{\beta+2}. \label{DBF}
\end{align}
Thus, using \eqref{GBD} and \eqref{DBF}, we can apply Theorem \ref{Yuting} to obtain the required result.
\end{proof}

\begin{corollary}\label{GGmoments}
 For each $1 \le l\le 11$ and for $ q \in \{2, \lceil\log\s\rceil\}$, we have $\s^{-1}\norm{T_l}_q \le bq^{3/2}$,
for a suitable fixed choice of~$b$.
\end{corollary}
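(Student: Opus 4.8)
The plan is to mirror the proof of Corollary~\ref{ERmoments}, with Lemma~\ref{GGL2} playing the role of Lemma~\ref{ERwen}: Lemma~\ref{GGL3} will supply uniform bounds on the local statistics out of which the $T_l'$ of Lemma~\ref{GGL1} are built, the product inequality~\eqref{CSXY} will handle the terms formed from two sums, and a Chernoff bound of the form~\eqref{Chernoff}, applied to each $M_i\sim\Bi(n-1,\pi r^2/n)$, will dispose of $T_4$. Throughout, $q$ denotes the moment order, so $q\le\lceil\log\sigma\rceil$; the parameters written $q$ and $1-q$ inside the $T_l'$ are the probability from~\eqref{qpdef}, lie in $[0,1]$, and will simply be bounded by~$1$. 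I will use repeatedly that $\sigma^2$ is of strict order~$n$, so that $\sigma^{-1}n^{1/2}=\bigo(1)$.

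First I would treat the single-sum terms $T_1,T_2,T_5,T_6,T_9,T_{11}$. Each $T_l'$ equals $c_n\sum_{i=1}^nX_i$, where $c_n$ is a deterministic constant with $c_n=\bigo(1)$ (and $c_n=\bigo(n^{-1})$ when $l=9$), and $X_i$ is a local statistic of the form $\mathrm{U}(\cV\cap B_{\rho r}(V_i),V_i)$ with $\rho$ a fixed integer ($\rho=3$ for $l\in\{1,2,5,6,9\}$ and $\rho=4$ for $l=11$). That $|X_i|$ is bounded by a constant depending only on $d$ and $r$ follows from Lemma~\ref{GGL3}, using $\pi^{(d)}_{M_i},\gamma^{(d)}_{M_i}\in[0,1]$; that $\gamma^{(d)}_{M_i}/M_i\le1$ whenever it is nonzero (for $T_6$ and $T_{11}$); that $\pi^{(d)}_{n-1}=0$, which together with the convention on zero numerators makes $\pi^{(d)}_{M_i}/(n-M_i-1)$ bounded (for $T_5$); and that $(n-M_i-1)^{-1}\sum_{j\in\hcI^c_{i,r}}Q_{ij}$ is an \emph{average} of terms each bounded by $2\kappa_3(d+2)$ (for $T_2$). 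Given boundedness, Lemma~\ref{GGL2} with $\beta=0$, $s=\rho r$ and this $c$ gives $n^{-1/2}\norm{\sum_iX_i-\mbE\sum_iX_i}_q\le b'q^{3/2}$ for all $q\ge1$, for a suitable $b'$ (absorbing the maximum in Lemma~\ref{GGL2} into a constant multiple of $q$, valid since $q\ge1$); hence $\sigma^{-1}\norm{T_l}_q=c_n\sigma^{-1}\norm{\sum_iX_i-\mbE\sum_iX_i}_q=\bigo(q^{3/2})$, with room to spare when $l=9$.

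Next I would treat the product terms $T_3,T_7,T_8,T_{10}$, each of the form $c_nAB$ with $c_n=\bigo(n^{-1})$ deterministic and $A,B$ sums of uniformly bounded local statistics as above. (For $T_{10}$ the factor $A=Y_d=\int_{C_n}\I\{N_r(x)=d\}\,dx$ is treated separately: relocating a single germ changes $Y_d$ by at most $2\pi r^2$, so McDiarmid's bounded-differences inequality (see \cite{McD98}) gives $\norm{Y_d-\mbE Y_d}_q=\bigo(\sqrt{nq})$; one could also invoke Lemma~\ref{Branch1}.) Since $\mbE A,\mbE B=\bigo(n)$ and, by Lemma~\ref{GGL2}, $\norm{A-\mbE A}_q,\norm{B-\mbE B}_q=\bigo(n^{1/2}q^{3/2})$ for all $q\ge1$, applying~\eqref{CSXY} with $X=A$ and $Y=B$ bounds $\norm{AB-\mbE\{AB\}}_q$ by $\bigo(nq^3)+\bigo(n^{3/2}q^{3/2})$, and since $q\le\lceil\log\sigma\rceil$ the second term dominates; multiplying by $c_n=\bigo(n^{-1})$ and by $\sigma^{-1}=\bigo(n^{-1/2})$ then gives $\sigma^{-1}\norm{T_l}_q=\bigo(q^{3/2})$.

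Finally, $T_4$ is supported on $\{\max_iM_i>n/2\}$, whose probability is at most $\eps_n:=nCe^{-cn}$ for suitable constants $c,C$ depending only on $r$, by~\eqref{Chernoff}; since $\pi^{(d)}_{n-1}=0$, $(n-M_i-1)^{-1}\le1$ when $M_i\le n-2$, and $|\sum_{j=1}^nS_j|\le n\kappa_3(d+2)$ by Lemma~\ref{GGL3}, one has $|T_4'|\le n^2\kappa_3(d+2)\,\I[\max_iM_i>n/2]$, so that $\sigma^{-1}\norm{T_4}_q\le\sigma^{-1}\cdot2n^2\kappa_3(d+2)\,\eps_n^{1/q}\to0$ for $q\le\lceil\log\sigma\rceil$, exactly as for $T_4$ in Corollary~\ref{ERmoments}; in particular it is $\le bq^{3/2}$. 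Taking $b$ to be the maximum of the eleven constants so obtained proves the corollary. The only steps not amounting to bookkeeping are the verification that each inner sum in the $T_l'$ is again uniformly bounded --- here the normalising factors $(n-M_i-1)^{-1}$ and $M_i^{-1}$ are exactly what turns these inner sums into averages of the quantities controlled by Lemma~\ref{GGL3} --- and the control of the integral term $Y_d$; everything else is the mechanical combination of Lemmas~\ref{GGL1}, \ref{GGL2}, \ref{GGL3}, the inequality~\eqref{CSXY}, and the Chernoff bound~\eqref{Chernoff} already carried out in the \ER\ case.
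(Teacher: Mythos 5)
Your argument is correct and follows essentially the same route as the paper: Lemma~\ref{GGL2} with $\beta=0$ for the single-sum terms $T_1,T_2,T_5,T_6,T_9,T_{11}$, inequality~\eqref{CSXY} combined with Lemma~\ref{GGL2} for the product terms $T_3,T_7,T_8,T_{10}$, and the Chernoff bound~\eqref{Chernoff} plus the crude a.s.\ bound for $T_4$. Your verification that each inner sum is bounded --- in particular that $(n-M_i-1)^{-1}\sum_{j\in\hcI^c_{i,r}}Q_{ij}$ is an average of terms controlled by Lemma~\ref{GGL3}, and that $\pi^{(d)}_{M_i}/(n-M_i-1),\,\gamma^{(d)}_{M_i}/M_i\le1$ under the zero-numerator convention --- matches what the paper uses implicitly via its uniform choice $s=4r$, $c=\kappa_5(d+2)^2$.

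The one place you depart from the paper is the treatment of $Y_d$. The paper cites \cite[Theorem~3.2]{Bart18} to obtain the sub-Gaussian concentration inequality~\eqref{BGGV}; you instead observe that relocating a single germ changes $Y_d$ by at most $2\pi r^2$ and appeal to McDiarmid's bounded-differences inequality. Both give $\norm{Y_d-\mbE Y_d}_q=\bigo(\sqrt{nq})$, which is comfortably within what~\eqref{CSXY} requires, so your alternative is perfectly adequate and is arguably more self-contained (bounded differences is elementary and in~\cite{McD98}, already cited). The aside ``one could also invoke Lemma~\ref{Branch1}'' is not right, however: $Y_d$ is an integral over $C_n$, not a random sum $\sum_{i\in\cE}Y_i$ over a random index set independent of the summands, so Lemma~\ref{Branch1} has no direct purchase on it. Since that remark is parenthetical and the McDiarmid argument carries the weight, the proof is still sound; just drop that parenthetical.
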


\begin{proof}. 
For $i =1,2,5,6,9,11$, Lemma~\ref{GGL3} implies that the conditions of Lemma~\ref{GGL2} hold with $s=4r$, 
$c=\kappa_{5}(d+2)^2$ and $\beta=0$, so that there are constants $b_i$, $i =1,2,5,6,9,11$, not depending on $\sigma$, 
such that, for any $q\in\IN$,
\ben{\label{eq:ts1germ}
     \sigma^{-1}\norm{ T_i}_q \Le b_i q^{3/2}.
}
For $i=3,7,8,10$, we use~\Ref{CSXY}, taking $X=\sum_{i=1}^n \frac{n\pi_{M_i}^{(d)} \I\{ M_i \leq n/2\} }{2(n-M_i -1)}, 
\; W_d,\; W_{d-1}, \; Y_d$ and $Y= \sum_{j=1}^n S_j, \; \sum_{i=1}^n \gamma^{(d)}_{M_i}, \; \sum_{i=1}^n \gamma^{(d)}_{M_i}, 
\; \sum_{i=1}^n \gamma^{(d)}_{M_i}$, respectively;
with the exception of~$Y_d$, we can apply Lemma~\ref{GGL2} with $s=2r$, $c=\kappa_{3}(d+2)$ and $\beta =0$ to prove 
that there is a constant $\hat c>0$ such that, for all $k\in\IN$, 
\ben{\label{eq:bddds}
  \sigma^{-1}\norm{X-\mbE X}_{k} \Le  \hat c k^{3/2}, \,\,\, \mbox{ and } \,\,\,
   \sigma^{-1}\norm{Y-\mbE Y}_{k} \Le \hat c k^{3/2}.
} 
For~$Y_d$, rather than defining a Stein coupling and using Theorem~\ref{BRRT}, we quote \cite[Theorem 3.2]{Bart18}, 
which implies that, for some $C>0$ and all $t>0$,
\begin{equation}\label{BGGV}
    \mbP(|Y_d - \mbE Y_d | > t) \Le \exp \left( - \frac{t^2}{2C \mbE Y_d}\right). 
\end{equation}
Since $\mbE Y_d \leq n$, a standard calculation implies that~\eq{eq:bddds} also holds for $Y=Y_d$.
Now applying~\eqref{CSXY}, and using the fact that, for $i=3,7,8,10$, $n^{-1} \mbE X \leq 1$ and 
$n^{-1} \mbE Y \leq \kappa_{3/2}(d+1)$, we deduce that there are
constants $b_i, i=3,7,8,10$ such that
\ben{\label{eq:ts2germa}
    \sigma^{-1}\norm{T_i}_2 \leq b_i,
}
and, for $q=\ceil{\log(\sigma)}$, and for all~$\s$ large enough,
\ben{\label{eq:ts2germb}
   \sigma^{-1}\norm{T_i}_q \Le b_i \max\left\{\sigma^{-1} (\log\sigma)^3, (\log\sigma)^{3/2}\right\}
         \Le b_i \log(\sigma)^{3/2}.
}
To bound $T_4$, we use \Ref{Chernoff}, which implies that, for some constant $C = C_{r,d}$, 
\ben{
     \mbP \left( \sum_{i=1}^n \I\{M_i > n/2\} \neq 0 \right) \Le Cn\exp(-n/6).
}
In addition, applying Lemma~\ref{GGL3} to bound $S_{(\cdot)}$, we have  
\ben{
\left| \sum_{i =1}^n \frac{\pi_{M_i}^{(d)} \I\{ M_i > n/2\} }{n-M_i -1} \sum_{j=1}^n S_j  \right| \Le n^2 \kappa_3(d+1)
  \quad{\rm a.s.},
}
since the interpretation of any summand with $M_i = n-1$ is zero, as remarked in Lemma~\ref{GGL2}.
Thus $\sigma^{-1} \norm{T_4}_q\leq b_4$ for all $q\in\IN$, and the corollary is proved.
\end{proof}

 \nin
Hence, in particular, when applying Theorem~\ref{BRRT}, we can take $c_2 = 11 b\ceil{\log\s}^{3/2}$ in~\eqref{eq:ezbd2}, 
and $\s^{-1}\norm{T}_2 \le 11\times 2^{3/2}b$ in~\eqref{eq:upsrti2bd}.

Since $R=0$ a.s.,
all that remains to be proved, in order to apply Theorem~\ref{BRRT}, is a bound of order~$\bigo(1)$ for the smoothness 
term~$\Upsilon$, to be used in \Ref{eq:upsrti2bd}.
Using the notation $I$, $J$, $X$ and~$V_j^s$ from around~\Ref{Xdefi}, we define
\[                                                           
   \mathcal{F}_2 \Def \sigma(I, X, J,  V_I,  V_J, V^s_J, \mathcal{V} \cap B_{2r}(V_I),  \mathcal{V} \cap B_{r}(V_J),  
         \mathcal{V} \cap B_{r}(V^s_J));
\]
observe that $D$ (and $G$) are $\mathcal{F}_2$-measurable. 
We adopt the convention that $J=I$ when $X=0$.

\begin{lemma}\label{GGL5}
    For $\Upsilon$ defined in Theorem \ref{BRRT}, we have $\Upsilon=O(1)$.
\end{lemma}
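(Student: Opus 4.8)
The plan is to follow the proof of Lemma~\ref{ERUP}: reduce to a smoothness estimate for $\law(W_d\giv\cF_2)$ on a high-probability event on which the frozen part of the configuration is small, and dispose of the complementary event using the boundedness of the moments of~$D$. Since the size-biased configuration $\cV^s$ is obtained from~$\cV$ by moving the single germ at~$V_J$ to~$V_J^s$, the increment $D=W_d^s-W_d$ alters only the occupancy counts of grains whose germ lies within~$r$ of~$V_J$ or of~$V_J^s$, so $|D|\le 1+N_r(V_J)+N_r(V_J^s)$; as $\law(N_r(V_J))$ and $\law(N_r(V_J^s))$ are stochastically dominated by binomial distributions with bounded mean, $\norm{D}_q=\bigo(1)$ for every fixed~$q$. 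Recalling that the Stein coupling is $(W_d,W_d^s,\mu_d)$, so that $G=\mu_d=n(1-b_d)$ is a constant of order~$n$, it therefore suffices to show that $\mbE\bcls{\abs{D(D-1)}\,S_2(\law(W_d\giv\cF_2))}=\bigo(n^{-1})$. First I would introduce the ($\cF_2$-measurable) event~$A$ that $\card\{\cV\cap B_{2r}(V_I)\}$, $\card\{\cV\cap B_r(V_J)\}$ and $\card\{\cV\cap B_r(V_J^s)\}$ are each at most~$\sqrt n$; since each of these counts is, conditionally on the relevant centre, dominated by a binomial with bounded mean, \eqref{Chernoff} gives $\mbP(A^c)\le Ce^{-c\sqrt n}$. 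On~$A^c$ one bounds $S_2(\cdot)\le4$ and applies Cauchy--Schwarz, using $\mbE[D^4]=\bigo(1)$, to get $\mbE\bcls{\abs{D(D-1)}\,S_2(\cdot)\,\I\{A^c\}}\le 4\norm{D(D-1)}_2\sqrt{\mbP(A^c)}=\lito(n^{-1})$, which multiplied by~$G$ contributes $\lito(1)$ to~$\Upsilon$.

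The substance is the estimate on~$A$, namely that $S_2(\law(W_d\giv\cF_2))=\bigo(n^{-1})$ uniformly over configurations consistent with $\cF_2\in A$. Conditionally on~$\cF_2$, the germs lying outside the bounded-radius balls $B_{2r}(V_I)$, $B_r(V_J)$ and~$B_r(V_J^s)$ (the \emph{free} germs) are i.i.d.\ uniform on their complement, and on~$A$ there are at least $n-3\sqrt n$ of them on a region of area $n-\bigo(1)$, so the conditioning freezes an asymptotically negligible part of the configuration and the constants below do not degrade. I would then apply \cite[Theorem~3.7]{Rol15} to the reversible Markov chain $(\wt W_d,\wt W'_d,\wt W''_d)$ obtained by resampling, at each step, the position of one uniformly chosen free germ, writing $Q_{\pm1}$ and $Q_{\pm1,\pm1}$ for the one- and two-step $\pm1$ transition probabilities as in~\eqref{RossEq}. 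In contrast to the \ER\ case, resampling a germ is a non-local move and $\mbE Q_{\pm1}$ is here of constant order --- a positive fraction of the germs, resampled into a germ-free region, shift~$W_d$ by exactly $\pm1$ --- so the task is to show that the bracketed quantity in~\eqref{RossEq} is of order~$n^{-1}$. This holds because $Q_{\pm1}(\cV)$ is an average over the~$n$ germs of uniformly bounded quantities, each determined by the germs near one germ together with ``empty-area'' statistics of the type of~$Y_d$, so that a spatial covariance computation, in the spirit of \cite[Section~2.3.1]{Fan14}, yields $\Var(Q_{\pm1})=\bigo(n^{-1})$ and $\mbE\abs{Q_{\pm1,\pm1}-Q_{\pm1}^2}=\bigo(n^{-1})$ (two independently resampled germs interact only with probability~$\bigo(n^{-1})$). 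Combining, $S_2(\law(W_d\giv\cF_2))=\bigo(n^{-1})$ on~$A$, whence $\mbE\bcls{\abs{D(D-1)}\,S_2(\cdot)\,\I\{A\}}\le\bigo(n^{-1})\,\mbE[D^2]=\bigo(n^{-1})$, and adding the two contributions gives $\Upsilon=n(1-b_d)\bclr{\bigo(n^{-1})+\lito(n^{-1})}=\bigo(1)$.

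The main obstacle is precisely this smoothness estimate. Unlike the \ER\ setting, resampling a germ's position changes several occupancy counts at once, so $Q_{\pm1}$ is no longer the clean connected/disconnected pair count of~\eqref{Qp1Eq}--\eqref{Qm1Eq}: one must classify the (bounded, by Lemma~\ref{GGL3}) local effect of a resampling, verify that $\mbE Q_{\pm1}$ is bounded below by a positive constant by exhibiting a positive-density family of ``good'' germs, and carry out the spatial covariance estimate for $\Var(Q_{\pm1})$ and $\mbE\abs{Q_{\pm1,\pm1}-Q_{\pm1}^2}$; this covariance bookkeeping, together with the verification that the conditioning on~$\cF_2$ on the event~$A$ really is negligible, is the main technical content of the lemma.
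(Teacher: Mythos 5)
Your proposal reaches the same conclusion but by a route that the paper deliberately avoids. After the same reduction to a smoothness bound $S_2(\law(W_d\giv\cF_2))=\bigo(n^{-1})$ on a high-probability event, you propose to apply \cite[Theorem~3.7]{Rol15} to the Markov chain that resamples the position of one uniformly chosen free germ at each step, mirroring the \ER\ argument of Lemma~\ref{ERUP}. The paper instead tiles the torus into $\bigo(r)\times\bigo(r)$ rectangles, defines a ``good event'' in each such that, on that event, the number of degree-$d$ germs in the rectangle equals a fair coin determined by the rectangle's interior and independent of everything outside it, and then shows that conditionally on the set $\cR_{GE}$ of good rectangles and the external configuration, $W_d$ is a deterministic shift of a $\Bi(N_{GE},\tfrac12)$ variable with $N_{GE}\asymp n$; the elementary second-difference bound for the binomial then yields $\bigo(n^{-1})$ with no $Q_{\pm1}$ bookkeeping at all.

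The resampling-chain route is plausible but considerably more delicate here than in the \ER\ case, precisely because the move is macroscopic: you correctly note that $\mbE Q_{\pm1}$ is of constant order, so one needs $\Var Q_{\pm1}=\bigo(n^{-1})$ and $\mbE\abs{Q_{\pm1,\pm1}-Q_{\pm1}^2}=\bigo(n^{-1})$. But $Q_{\pm1}$ is not, as your sketch suggests, a sum over germs of uniformly bounded \emph{local} statistics. Writing the increment of moving germ $j$ to location $x$ as a removal effect plus an insertion effect, $Q_1$ decomposes roughly as $\sum_k n^{-1}\,{Y^{(1-k)}}\cdot n^{-1}{W^{(k)}}$, a sum of \emph{products} of a global area statistic of $Y_d$-type and a local degree count; the covariance and two-step estimates would have to be carried out for these products, which goes beyond the local-pair counting of \cite[Section~2.3.1]{Fan14}, and your sketch does not take the argument far enough to confirm the rates. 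The paper's tiling construction sidesteps all of this. One small further point: $|D|\le K$ almost surely by the packing argument of Lemma~\ref{GGL3}, so the moment plus Cauchy--Schwarz step you introduce on the bad event $A^c$ is unnecessary, though not incorrect.
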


\begin{proof}
From Lemma \ref{GGL3}, we see that there is a constant~$K$ such that $|D| \leq K$, uniformly in~$n$ (see also the 
proof of \cite[Theorem 3.3]{Bart18}). 
Letting $N^{\mathcal{F}_2}:= \card\{\mathcal{V} \cap ( B_{2r}(V_I) \cup B_r(V_J) \cup B_r(V_J^s) )\}$, we have
\begin{align}
\Upsilon  &= \mbE\left[ | G D (D-1) | S_2( \mathcal{L}(W_d \giv \mathcal{F}_2) \right] \nonumber \\
  & \Le \mu K(K+1) \left\{ \mbE \left[ S_2( \mathcal{L}(W_d \giv \mathcal{F}_2) \I\{N^{\mathcal{F}_2} \leq \sqrt{n}\} \right]
    + \mbE \left[ S_2( \mathcal{L}(W_d \giv \mathcal{F}_2) \I\{ N^{\mathcal{F}_2} > \sqrt{n}\} \right]  \right\}  \non \\
  & \Le \mu K(K+1) \mbE \left[ S_2( \mathcal{L}(W_d \giv \mathcal{F}_2) \I\{ N^{\mathcal{F}_2} \leq \sqrt{n}\} \right] 
           \label{GL31} \\
  &\qquad \mbox{} +  \mu K(K+1) \mbP (N^{\mathcal{F}_2} > \sqrt{n} ). \label{GL32}
\end{align}

To bound \eqref{GL32}, observe that $\mathcal{L}(|\mathcal{V} \cap B_{2r}(V_I)|)=Bi(n-1, 4 \pi r^2)+1$.  If $X=1$, 
then $\mathcal{V} \cap B_r(V^s_J) \subseteq \mathcal{V} \cap B_{2r}(V_I)$ and 
$\card\{\mathcal{V} \cap B_r(V_J) \setminus B_{2r}(V_I)\}$ is stochastically smaller than a random variable with 
the distribution $\Bi(n-1, \pi r^2/(n-\pi r^2))+1$; and if $X=-1$, then 
$\mathcal{V} \cap B_r(V_J) \subseteq \mathcal{V} \cap B_{2r}(V_I)$ and 
$\card\{\mathcal{V} \cap B_r(V^s_J) \setminus B_{2r}(V_I)\}$ is stochastically smaller than a random variable 
with the distribution $\Bi(n-1, \pi r^2/(n-\pi r^2))+1$.
Letting $Y$ denote a random variable with distribution $\Bi(n-1, \pi r^2/(n-\pi r^2))$, and applying~\Ref{Chernoff},
%  \cite[Theorem 2.3(b)]{McD98} in \eqref{McDGG} 
we obtain 
\begin{align}
    \mbP (N^{\mathcal{F}_2} > \sqrt{n} ) &\Le \mbP( N_{2r}(V_I) \geq \sqrt{n}/2) +  \mbP( Y^{(J)} \geq \sqrt{n}/2) 
             \Le C_re^{-\sqrt n/6}, \label{McDGG}
\end{align}
for some $C_r>0$, uniformly in $n$. This implies that \eqref{GL32} is of order~$\bigo(1)$.

To bound \eqref{GL31}, given $\mathcal{F}_2$, we define a new configuration 
$\mathcal{V}^{\mathcal{F}_2} = \{ V^{\mathcal{F}_2}_1, \dots ,  V^{\mathcal{F}_2}_n \}$ by letting 
$V^{\mathcal{F}_2}_i = V_i$ if $i \in \mathcal{I}_{2r}(V_I) \cup \mathcal{I}_{r}(V_J) \cup \mathcal{I}_{r}(V^s_J)$, 
and otherwise letting $V^{\mathcal{F}_2}_i$ be uniformly distributed on 
$C_n \setminus (B_{2r}(V_I) \cup B_{r}(V_J) \cup B_r(V^s_J))$, independently of everything else. Observe that, 
if we let $W_d^{\mathcal{F}_2}$ be the number of germs in $\mathcal{V}^{\mathcal{F}_2}$ whose grain does not contain 
precisely~$d$ germs, then $\mathcal{L}( W_d^{\mathcal{F}_2}) = \mathcal{L}(W_d \giv {\mathcal{F}_2})$. To establish 
the bound on~$\Upsilon$, we prove that, for any fixed event in~$\mathcal{F}_2$ with $N^{\mathcal{F}_2} \leq \sqrt{n}$, 
we have
\begin{equation}\label{PG2}
    S_2( \mathcal{L}(\tilde W_d \giv \mathcal{F}_2))  \Eq \bigo(n^{-1}).
\end{equation}
For ease of notation, during the remainder of the proof, we tacitly assume that every random quantity has
distribution conditional on~$\mathcal{F}_2$.

We first establish \eqref{PG2} under the assumption that $d\geq 1$. Divide the space~$C_n$ into disjoint rectangles~$R_i$ 
with height $7r/3$ and width~$13r/3$; if we ignore any left over space, then there are 
$ \left\lfloor \frac{\sqrt{n}}{ 7r/3}\right\rfloor \left\lfloor\frac{\sqrt{n}}{13r/3} \right\rfloor$ such rectangles. 
Let $\mathcal{R} := \{ i\colon\, R_i \cap (B_{2r}(V_I) \cup B_{r}(V_J) \cup B_r(V^s_J)) = \emptyset \}$ be the set of 
rectangles that do not intersect $B_{2r}(V_I) \cup B_{r}(V_J) \cup B_r(V^s_J)$. Letting $n_r:= \card\{\mathcal{R}\}$, 
we see that
\begin{equation}\label{RecNum}
    \left\lfloor \frac{\sqrt{n}}{ 7r/3}\right\rfloor \left\lfloor\frac{\sqrt{n}}{13r/3} \right\rfloor -10 \leq n_r 
           \Le \left\lfloor \frac{\sqrt{n}}{ 7r/3}\right\rfloor \left\lfloor\frac{\sqrt{n}}{13r/3} \right\rfloor,
\end{equation}
since, for instance, a ball of radius~$2r$ can intersect at most~$6$ of the rectangles;
this means that~$n_r$ is of strict order~$n$. 
Let $(x_i,y_i) \in C_n$ denote the coordinates of the bottom lefthand corner of rectangle $i$, and, for $j=1,\dots,4$, 
let $c_{i,j}=(x_i+r/2 + 2rj/3,\, y_i+7r/6)$. 
%Observe that, for a fixed $i$, the $c_{i,j}$ correspond to the centers of the discs illustrated in Figure ... . 
We say that the \emph{good event} occurs in $R_i$, denoted~$GE_i$, if \emph{(i)} $N_{r/6}(c_{i,1}) =2$, \emph{(ii)} 
$N_{r/6}(c_{i,2})=d$, \emph{(iii)}~$N_{r/6}(c_{i,3}) + N_{r/6}(c_{i,4}) = 1$, and \emph{(iv)} 
$\cV\cap\bclr{R_i \setminus \cup_{j=1}^4 B_{r/6}(c_{i,j})}=\emptyset$.
Observe that, if the good event occurs in~$R_i$, then the total number of germs in~$R_i$ whose grain contains
exactly~$d$ germs is 1 if $N_{r/6}(c_{i,3})=1$, and is 0 if $N_{r/6}(c_{i,4})=1$, regardless of the configuration 
of germs outside~$R_i$. 

Let $\I\{GE_i\}$ denote the indicator of~$GE_i$, and 
 let $\mathcal{R}_{GE} := \{ i \in \mathcal{R} \colon\, \I\{GE_i\}=1\}$,  $N_{GE} := \card\{\mathcal{R}_{GE}\}$;
define $U_{I,J}=\leb\{B_{2r}(V_I) \cup B_{r}(V_J) \cup B_r(V^s_J)\} \le 5\pi r^2$, noting that it is $\cF_2$-measurable.
For $i \in \mathcal{R}$, through elementary calculations, we obtain
\begin{align*}
   \xi \Def \mbE \I\{GE_{i}\} &\Eq {n - N^{\mathcal{F}_2} \choose d+3} \left( \frac{|R_i|}{n-U_{I,J}} \right)^{d+3} \\
    &\qquad \times \left( \frac{n-U_{I,J}-|R_i|}{n-U_{I,J}} \right)^{n-N^{\mathcal{F}_2}-(d+3)} \frac{(d+3)!}{d!} 
          \left( \frac{\pi (r/6)^2}{|R_i|} \right)^{d+3}.
\end{align*}
Under the assumption that $N^{\mathcal{F}_2} \leq \sqrt{n}$,  we conclude that~$\xi$ converges to a positive constant,
for~$d$ fixed, as $n\to\infty$. When combined with~\eqref{RecNum}, this gives $\mbE(N_{GE})=n_r \xi \asymp n$.
We can then (suppressing the conditioning on~$\cF_2$ in the expectations) write 
\begin{align}
  S_2(W_d \giv \cF_2) &\Eq \sup_{h: \norm{h}\leq 1} \mbE[ \Delta^2 h(W_d)] \nonumber \\
     &\Eq \sup_{h: \norm{h}\leq 1} \mbE[ \Delta^2 h(W_d)\,\I\{| N_{GE} - \mbE(N_{GE}) | \geq \mbE(N_{GE})/2\} ] \nonumber \\
   &\qquad\mbox{} + \sup_{h: \norm{h}\leq 1} \mbE[ \Delta^2 h(W_d)\,\I\{| N_{GE} - \mbE(N_{GE}) | < \mbE(N_{GE})/2 \}] 
                \nonumber \\
   &\Le 4\mbP[| N_{GE} - n_r \xi | \geq n_r \xi/2 ] \label{Sm1}\\
   &\qquad\mbox{}  +\sup_{h: \norm{h}\leq 1} \mbE\left[ \Delta^2 h(W_d)
            \,\big|\, | N_{GE} - n_r \xi | \leq \frac{n_r \xi}{2} \right].  \label{Sm2}
\end{align}
We establish \eqref{PG2} by separately demonstrating that both \eqref{Sm1} and \eqref{Sm2} are of order~$\bigo(n^{-1})$.

To bound \eqref{Sm1}, observe that, for $i\neq j \in \mathcal{R}$, 
\begin{align*}
   \mbE (\I\{GE_i\} \I\{GE_j\} ) &\Eq \xi \mbE(\I\{GE_j\} |\I\{GE_i\} =1) \\
    &\Eq \xi {n-N^{\mathcal{F}_2}-(d+3) \choose d+3}\left( \frac{|R_i|}{n-U_{I,J}-|R_i|} \right)^{d+3} \\
&\qquad \times \left( \frac{n-U_{I,J}-2|R_i|}{n-U_{I,J}- |R_i|} \right)^{n-N^{\mathcal{F}_2}-2(d+3)} 
         \frac{(d+3)!}{d!} \left( \frac{\pi (r/6)^2}{|R_i|} \right)^{d+3}.
\end{align*}
Under the assumption that $N^{\mathcal{F}_2} \leq \sqrt{n}$, we easily deduce that
\[
    \mbE (\I\{GE_i\} \I\{GE_j\} ) \Eq \xi^2 (1+O(1/n)),
\]
which leads to
\begin{align*}
  \Var(N_{GE}) &\Eq \sum_{i,j} \mbE (\I\{GE_i\} \I\{GE_j\}) - n_r^2 \xi^2 \\ 
      &\Eq n_r(n_r-1) \xi^2(1+O(1/n))+n_r\xi -n_r^2 \xi^2  \\ 
      &\Eq \bigo(n).
\end{align*}
By Chebyshev's inequality, it follows that
\[
   \mbP(|N_{GE} - n_r\xi | > n_r\xi/2 ) \Le \frac{\Var(N_{GE})}{(n_r \xi /2)^2} \Eq \bigo(1/n).
\]

To bound \eqref{Sm2}, we let $X_i = \sum_{j=1}^n \I\{V_j \in R_i \} \I\{M_j= d\}$ be the number of germs in~$R_i$ whose 
grain contains $d$ other germs, and define
\[
    Z_d \Def W_d - \sum_{i \in \mathcal{R}_{GE}} X_i.
\]
For the reasons described above, $\mathcal{L}(X_i \giv i \in \mathcal{R}_{GE}) = \Be(1/2)$ and, given $\mathcal{R}_{GE}$, 
$(X_i)_{i\in \cR_{GE}}$ are conditionally i.i.d.\ and independent of~$Z_d$.

\ignore{
Using the ideas of \cite{Lin02}, if $\law(Y)=Bi(m,1/2)$, then there is a constant $c>0$ such that
\be{
\sup_{h: \norm{h}\leq 1} \mbE[ \Delta h(Y)] \leq c m^{-1/2}.
}
Using this in the last line, we have that  
\begin{align*}
\sup_{h: \norm{h}\leq 1} \mbE \bigg[ \Delta &h\bigg(Z_d + \sum_{i \in \mathcal{R}''_{GE}}X_i\bigg) \bigg| 
| N_{GE} - n_r \xi | \leq \frac{n_r \xi}{2}, \cR_{GE}\bigg] \\ 
&=\sup_{h: \norm{h}\leq 1}\mbE \bigg[ \Delta h\bigg( \sum_{i \in \mathcal{R}''_{GE}}X_i\bigg) \bigg|
 | N_{GE} - n_r \xi | \leq \frac{n_r \xi}{2}, \cR_{GE}\bigg] \\
&=O(n^{-1/2}).
\end{align*}
Averaging over $\cR_{GE}$ gives
\be{
\sup_{h: \norm{h}\leq 1} \mbE \bigg[ \Delta h\bigg(Z_d + \sum_{i \in \mathcal{R}''_{GE}}X_i\bigg) \bigg| 
| N_{GE} - n_r \xi | \leq \frac{n_r \xi}{2}\bigg]=\bigo(n^{-1/2}),
}
and a similar argument implies
\be{
\sup_{h: \norm{h}\leq 1} \mbE \bigg[ \Delta h\bigg(\sum_{i \in \mathcal{R}'_{GE}}X_i\bigg) \bigg| 
| N_{GE} - n_r \xi | \leq \frac{n_r \xi}{2}\bigg]=\bigo(n^{-1/2})
}
Observe that given $N_n \geq n_r\xi/2$, the random variables $\sum_{i \in \mathcal{R}'_{GE}}X_i$ and
 $Z_d + \sum_{i \in \mathcal{R}''_{GE}}X_i$ are conditionally independent. Thus applying \cite[Lemma 3.4]{Rol15}
 we obtain
\begin{align*}
\mbE&\left[ \Delta^2 h(W_d) \big| | N_n - n_r \xi | \leq \frac{n_r \xi}{2} \right] \\
&\leq \bigg(\sup_{h: \norm{h}\leq 1} \mbE\bigg[ \Delta h(Z_d + \sum_{i \in \mathcal{R}''_{GE}}X_i)
 \big| N_n \geq n_r\xi/2\bigg]\bigg)\bigg( \sup_{h: \norm{h}\leq 1} \mbE\bigg[ \Delta h(\sum_{i \in \mathcal{R}'_{GE}}X_i) 
\big| N_n \geq n_r\xi/2 \bigg] \bigg) \\
&=O(n^{-1}).
\end{align*}
}
By a standard argument, for $B \sim \Bi(m,1/2)$, $\sup_{h\colon \norm{h}\leq 1}\ex[\Delta^2h(B)] \le Cm^{-1}$,
for a universal constant~$C$.  Since, on $|N_{GE} - n_r\xi| \le n_r\xi/2$, we have $N_{GE} \ge n_r\xi/2 \asymp n$,
it follows that, for all~$h$ with $\norm h \le 1$, we have
\[
     \mbE\left[ \Delta^2 h(W_d)
            \,\big|\, \mathcal{R}_{GE}, | N_{GE} - n_r \xi | \leq \frac{n_r \xi}{2}, Z_d \right] 
         \Eq \bigo(n^{-1}),      
\]
where the constant implied in the order term can be taken to be uniform in~$n$ and in the realizations
of the conditioning random variables.  Taking expectations
establishes~\eqref{PG2}, and thus the lemma in proved in the case $d\geq1$.

The proof in case where $d=0$ follows the same lines, once the definition of the good event in~$R_i$ 
is modified to \emph{(i)} $N_{r/6}(c_{i,1}) =2$, \emph{(ii)} $N_{r/6}(c_{i,2})+N_{r/6}(c_{i,3})=1$, 
\emph{(iii)} $N_{r/6}(c_{i,4}) = 0$, and \emph{(iv)}  $\cV\cap\bclr{R_i \setminus \cup_{j=1}^4 B_{r/6}(c_{i,j})}=\emptyset$.
\end{proof}

In view of Corollary~\ref{GGmoments} and Lemma~\ref{GGL5}, and because~$R=0$ a.s., Theorem~\ref{GGlclt} is proved.

\section{Proof of Theorem~\ref{BRRT}}\label{sec:appenstn}

We begin by giving a high level overview of Stein's method for LCLTs.
Stein's method is used to bound distances that can be expressed in the form
\[
  d_{(*)}\left(\law(W), \law(Z)\right) \Eq \sup_{h \in \mathcal{H}_{(*)}} \left\{ \mbE h(W) - \mbE h(Z) \right\};
\]
here, we consider $\mathcal{H}_{\mathrm{TV}} = \{ \bone_A \colon\, A \subset \mathbb{Z} \}$ and 
$\mathcal{H}_{\mathrm{loc}} = \{ \pm \bone_{\{a\}}\colon\, a \in \mathbb{Z} \}$.
To apply Stein's method, we first find an operator $\mathcal{A}$ such that $\mbE \mathcal{A} f(Z)=0$ for all 
functions~$f$ for which this expectation exists; we then solve the \emph{Stein equation} 
\[
    \mathcal{A} f_h \Eq h - \mbE h(Z),
\]
for all $h \in \mathcal{H}_{(*)}$, yielding the set of solutions~$\mathcal{F}_{(*)}$; finally, we bound 
\begin{equation}\label{SteStep3}
  d_{(*)}(\mathcal{L}(W),\mathcal{L}(Z)) \Eq \sup_{f_h \in \mathcal{F}_{(*)}} \mbE \mathcal{A} f_h(W)
\end{equation}
by deriving properties of the solutions $f_h \in \mathcal{F}_{(*)}$ and by exploiting probabilistic properties of~$W$.
In the case of approximation by translated Poisson distributions, deriving properties of 
$f_h \in \mathcal{F}_{(*)}$ reduces to studying the solutions to the Poisson Stein equation
\begin{equation}\label{PoiSte}
  \lambda f_h(i+1) - i f_h(i) \Eq h(i) - \mbE h(Y), \quad i \geq 0; \quad Y \sim \Po(\lambda).
\end{equation}
Indeed, recalling the definition of the translated Poisson distribution in Section~\ref{SMLCLTs} and
the notation $s$ and~$\gamma$ of~\Ref{TP-notation}, we need only to take 
$\lambda := \sigma^2 + \gamma$ and to replace $f_h(x)$ by $g_h(x)=f_h(x-s)$ for $x \in \mathbb{Z}$.
If $h =\bone_{\{a\}}$ and $f_a$ is the corresponding solution to \eqref{PoiSte}, then the primary property 
of~$f_a$ used to establish the local bound in Theorem~\ref{BRRT} (see \eqref{BRR2}--\eqref{BRR3}) is 
\begin{equation}\label{FDId}
  |\Delta f_a(x) | \Le \frac{1}{\lambda^{3/2} \sqrt{2e}}+ \frac{|\lambda - x|}{\lambda^2} 
       + \frac{1}{\lambda}\bone_{\{a\}}(x),
\end{equation}
where $\Delta$ denotes the first difference operator $\Delta g(k)\Def g(k+1)-g(k)$. We refer the reader 
to \cite[Lemma 3.3]{Bar19} for the derivation of \eqref{FDId}. 

After deriving the necessary properties of $\mathcal{F}_{(*)}$, it still remains to bound \eqref{SteStep3} using 
probabilistic properties of~$W$, for which we use a Stein coupling. 
If $(W,W', G)$ is a Stein coupling and $D:=W'-W$, then
\begin{align}
  | \mbE [\lambda g_h(W+1) &- W g_h(W)] | \label{SteCr1} \\
  &\Eq | \mbE[ \lambda \Delta g_h(W) + G ( g_h(W') - g_h(W))] |  \nonumber \\ 
  &\Le  | \mbE [(\lambda - GD) \Delta g_h(W) ]| + | \mbE [G D \Delta g_h(W) - G( g_h(W') - g_h(W))]|. \label{SteCR}
\end{align}
If we can find a Stein coupling, it allows us to bound \eqref{SteCR}, which is generally easier to bound 
than~\eqref{SteCr1}.
Roughly speaking, if we continue this derivation and apply \eqref{FDId} to each~$g_a$, then we obtain 
the following result, which is relatively straightforward to put together  from \cite{Bar18}.

\begin{theorem}[Corollary 2.3 and Lemma 2.6 of \cite{Bar18}]\label{BRRTdeet}
Let $(W,W',G,R)$ be an approximate Stein coupling with $W$ and~$W'$ integer valued, $\mbE W= \mu$ and $Var(W)= \sigma^2$. 
Set $D := W'-W$, and let $\mathcal{F}_1$ and~$\mathcal{F}_2$ be sigma algebras such that~$W$ is $\mathcal{F}_1$-measurable 
and such that $(G,D)$ is $\mathcal{F}_2$-measurable. Define 
\begin{equation*}
    \Upsilon \Def \mbE\left[| GD(D-1) |\,S_2(\mathcal{L}(W \giv\mathcal{F}_2))\right],
\end{equation*}
%and assume that for some integer $k \geq 0$ and for some non-negative random variables $T_1, \dots T_{k}$, we have
and
\[ 
 T:=   \left| \mbE[GD \giv \mathcal{F}_1] - \mbE[GD] \right|. % \Le \sum^{k}_{i=1} T_i .
\]
Then 
\begin{equation}\label{TVtgb}
   \dtv\left(\mathcal{L}(W), TP(\mu, \sigma^2) \right) \Le
       \frac{1}{\sigma} \left( \s^{-1}\norm{T}_1 + 2 \norm{R}_2 + 2(\Upsilon+1)\right).
\end{equation}
Moreover, for any any positive $t$,
\begin{align}
  \delloc &\Def   \dloc\left(  \mathcal{L} (W), TP(\mu,\sigma^2) \right) \notag \\
    &\Le \frac{1}{\sigma^2} \left(2 \s^{-1}\norm{T}_2
       +  t \left(t^{-1}\mbE\bcls{ T\I[\sigma^{-1} T \geq t]} +
                  \sigma \sup_{a \in \mathbb{Z}} \mbP(W=a) \right) \right) \label{BRR2} \\
&\quad + \frac{\norm{R}_2}{\sigma^2} \left( 3 + \sigma \sup_{a \in \mathbb{Z}} \mbP(W=a) \right) \label{BRR2a} \\
&\quad + \frac{2(\Upsilon+1)}{\sigma^2}. \label{BRR3}
\end{align}
\end{theorem}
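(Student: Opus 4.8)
The plan is to run the Stein-equation pipeline sketched above: turn the two distances into expectations of a Poisson Stein operator applied to~$W$, feed in the approximate coupling \eqref{SteCouDef} to replace the characteristic equation, and estimate the resulting error terms using the solution bound \eqref{FDId} for $\dloc$ and the classical Poisson ``magic factors'' for $\dtv$. Since $\mbP(Z=a)=\mbP(Z-s=a-s)$ for $Z\sim\TP(\mu,\sigma^2)$, both $\dtv$ and $\dloc$ between $\law(W)$ and $\law(Z)$ equal the corresponding distances between $\law(W-s)$ and $\law(Y)$, where $Y\sim\Po(\lambda)$, $\lambda=\sigma^2+\gamma$; note that $\mu-s=\lambda$. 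For a test function~$h$ ($h=\bone_A$ for $\dtv$, $h=\pm\bone_{\{b\}}$ for $\dloc$) let $f_h$ solve \eqref{PoiSte} and put $g_h(x):=f_h(x-s)$. Then $\mbE h(W-s)-\mbE h(Y)=\mbE[\lambda g_h(W+1)-(W-s)g_h(W)]$, and writing $W-s=(W-\mu)+\lambda$ and applying \eqref{SteCouDef} to~$g_h$ gives
\[
\mbE h(W-s)-\mbE h(Y)=\mbE\bigl[(\lambda-GD)\Delta g_h(W)\bigr]+\mbE\bigl[GD\,\Delta g_h(W)-G(g_h(W')-g_h(W))\bigr]-\mbE[Rg_h(W)],
\]
which is exactly \eqref{SteCR} together with the remainder $-\mbE[Rg_h(W)]$; it remains to bound the three pieces uniformly in~$h$ and take suprema.

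For the \emph{coupling piece}, telescope $g_h(W')-g_h(W)$ along the jump~$D$ and subtract $GD\,\Delta g_h(W)$; what is left is $G$ times a signed combination, of total weight at most $\tfrac12|D(D-1)|$, of second differences $\Delta^2 g_h$ evaluated near~$W$. Since $(G,D)$ (and hence the range of the shifts) is $\mathcal F_2$-measurable, conditioning on $\mathcal F_2$ and using $|\mbE[\Delta^2 g_h(W+k)\giv\mathcal F_2]|\le\norm{g_h}_\infty\,S_2\bclr{\law(W\giv\mathcal F_2)}$ (summation by parts and translation invariance of $S_2$) bounds this piece by $\tfrac12\norm{g_h}_\infty\,\mbE\bigl[|GD(D-1)|\,S_2(\law(W\giv\mathcal F_2))\bigr]=\tfrac12\norm{g_h}_\infty\Upsilon$. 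For the \emph{concentration piece}, condition on $\mathcal F_1$ to replace $GD$ by $\mbE[GD\giv\mathcal F_1]$, and bound $|\lambda-\mbE[GD\giv\mathcal F_1]|\le T+|\lambda-\mbE[GD]|\le T+1+|\mbE[R(W-\mu)]|\le T+1+\sigma\norm{R}_2$, using $\sigma^2-\mbE[GD]=\mbE[R(W-\mu)]$ (put $f(w)=w-\mu$ in \eqref{SteCouDef}) and $0\le\gamma<1$. For the \emph{remainder piece}, $|\mbE[Rg_h(W)]|\le\mbE[|R|\,|g_h(W)|]$.

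It remains to insert bounds on $g_h$ and $\Delta g_h$. For $\dtv$ one uses $\norm{f_A}_\infty\le C\lambda^{-1/2}$ and $\norm{\Delta f_A}_\infty\le\lambda^{-1}$: the concentration piece is at most $\mbE[T]\norm{\Delta f_A}_\infty+(1+\sigma\norm{R}_2)\norm{\Delta f_A}_\infty\le\lambda^{-1}\norm{T}_1+O(\lambda^{-1})+\sigma\norm{R}_2\lambda^{-1}$, the remainder at most $\norm{R}_1\norm{f_A}_\infty$, and the coupling piece at most $\tfrac12\norm{f_A}_\infty\Upsilon$; collecting the $O(\lambda^{-1})$-order leftovers into the ``$+1$'' gives \eqref{TVtgb}. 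For $\dloc$ one uses \eqref{FDId} in the shifted form — it replaces $|\lambda-x|$ by $|\mu-W|$ and carries a point-mass term $\tfrac1\lambda\bone\{W=b+s\}$, with $\mbP(W=b+s)\le\sup_a\mbP(W=a)$ — together with the pointwise bound $\norm{f_b}_\infty=O(\lambda^{-1})$ for point-indicator solutions and its refinement (from \cite[Lemma 3.3]{Bar19}) for the remainder piece. Distributing $|\Delta g_h(W)|$ against $T+1+\sigma\norm{R}_2$, the $\lambda^{-3/2}$ and $\lambda^{-2}|\mu-W|$ summands are handled by Cauchy--Schwarz with $\norm{W-\mu}_2=\sigma$ (producing the $\tfrac{2\sigma^{-1}\norm{T}_2}{\sigma^2}$ term of \eqref{BRR2} plus $O(\sigma^{-2})$ and $O(\sigma^{-2}\norm{R}_2)$ leftovers), while the point-mass summand requires the truncation
\[
\mbE\bigl[T\,\bone\{W=b+s\}\bigr]\le\sigma t\,\mbP(W=b+s)+\mbE\bigl[T\,\I[\sigma^{-1}T\ge t]\bigr],
\]
which peels off the a priori uncontrolled point probability and reproduces exactly the $t$-bracket of \eqref{BRR2}; the coupling piece gives $\tfrac12\norm{f_b}_\infty\Upsilon=O(\Upsilon/\sigma^2)$, hence \eqref{BRR3} (the ``$+1$'' again absorbing $O(\lambda^{-3/2})$ leftovers), and the remainder piece, via the refined pointwise bound on $f_b$ and Cauchy--Schwarz, gives \eqref{BRR2a}.

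The conceptual steps — the split above and the $S_2$/second-difference estimate for the coupling piece — are routine given \eqref{FDId} and \eqref{SteCouDef}. The delicate point is the truncation: \eqref{FDId} necessarily carries a term $\lambda^{-1}\bone_{\{b\}}(\cdot)$, and when this meets a random multiplier such as~$T$ one cannot extract $\sup_a\mbP(W=a)$ for free without paying for the tail $\mbE[T\,\I[\sigma^{-1}T\ge t]]$; tuning the threshold~$t$, and ultimately closing the loop by substituting the (already proved) bound $\sup_a\mbP(W=a)=O(\sigma^{-1})$ back in, is what makes the local estimate usable and is where the real care lies. A secondary nuisance is the bookkeeping needed to verify that the many $O(\lambda^{-1})$- and $O(\lambda^{-3/2})$-order cross-terms actually fit inside the constants ``$+1$'' and ``$3$'', and that the conditioning manipulations remain legitimate for a possibly-negative, unbounded jump~$D$.
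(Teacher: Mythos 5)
Your sketch follows exactly the route the paper itself outlines in Section~6 --- reduce to the Poisson Stein equation for $\law(W-s)$, use the approximate coupling to arrive at the decomposition \eqref{SteCR} plus the remainder $-\mbE[Rg_h(W)]$, bound the coupling piece by $\Upsilon$ via conditioning on $\mathcal{F}_2$ and $S_2$, control the concentration piece through $T$ and the identity $\mbE[GD]=\sigma^2-\mbE[R(W-\mu)]$, and absorb the point-mass term of \eqref{FDId} by truncating at level~$t$ --- which is precisely the argument of Corollary~2.3 and Lemma~2.6 of the cited Barbour--R\"ollin--Ross paper, stated here without proof. Your proposal is correct in its essentials (modulo the constant bookkeeping you acknowledge) and takes the same approach.
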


\begin{proof}[Proof of Theorem~\ref{BRRT}]
%For the proof, let $\wh B$ be a constant depending only on $b$ and $k$ that may change from line to line. 
The condition~\eq{eq:upsrti2bd} of Theorem~\ref{BRRT} and the Cauchy--Schwarz inequality imply that~\eq{TVtgb} is 
bounded by $5c_1 \sigma^{-1}$, which is the total variation bound~\eq{eq:ezbd1}. For the local bound, 
it is immediate that~\eq{BRR3} is bounded by $2c_1\sigma^{-2}$. Next, note that
\besn{\label{eq:pwmax}
    \sup_{a\in\IZ} \IP(W=a) 
	&\leq \delloc + \sup_{a\in\IZ} \TP(\mu, \sigma^2)\bclr{\{a\}} \leq \delloc + \s^{-1},
}
and so~\eq{BRR2a} is bounded by $4c_1 \sigma^{-2} + c_1 \s^{-1}\delloc$. For~\eq{BRR2}, 
the first term is easily seen to be bounded by $2c_1\sigma^{-2}$.
To bound the second term, note that Markov's inequality implies that, with $q := \ceil{\log\s}$,
\ben{\label{eq:tilogbal}
 t^{-1}\mbE\bcls{T\I[\sigma^{-1} T \geq t]} \Le \mbE\bcls{ (\sigma^{-1}T)^{q}}\,\frac{\sigma}{t^{q}}
    \Le \sigma (c_2/t)^{q}.
}
Choosing $t=e c_2$ implies that~\eq{eq:tilogbal} is bounded by~$1$.
Hence, invoking~\eq{eq:pwmax}, we have
\be{
 t\left( t^{-1}\mbE\bcls{ T\I[\sigma^{-1} T \geq t]} +\sigma \sup_{a \in \mathbb{Z}} \mbP(W=a) \right) 
 \Le (2 + \s\delloc)ec_2,
}
and hence the second term in~\eq{BRR2} is bounded by $2ec_2\s^{-2} + ec_2\s^{-1}\delloc$.
Since, by Assumption~\Ref{c-condition}, $(c_1 + ec_2)\s^{-1} \le 1/2$, it follows from the bounds
on \Ref{BRR2}--\Ref{BRR3} that
\[
     (1/2)\delloc \Le    \{8c_1 + 2ec_2\}\sigma^{-2},
\]
completing the proof of the local bound~\eq{eq:ezbd}.
\end{proof}

\section{Random sum and binomial moment bounds}\label{sec:branchbinom}

The following two results are \cite[Lemmas~5.1 and~5.2]{Bar18}, and are stated without proof.

\begin{lemma}\label{Branch1}
Let $\mathcal{I}$ be a finite index set, $\mathcal{E}$ be a random (possibly empty) subset of $\mathcal{I}$, and define $E := | \mathcal{E} |$. Let $\{ Y_i \}_{i \in \mathcal{I}}$ be a collection of random variables independent of $\mathcal{E}$ and, for $\ell \in \mbN$, let $\max_{i \in \mathcal{I}} \norm{ Y_i }_\ell \leq y$. Then 
\[
\norm{\sum_{i \in \mathcal{E}} Y_j}_\ell \leq y \norm{E}_\ell.
\]
\end{lemma}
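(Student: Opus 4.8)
The plan is to condition on the random index set~$\mathcal{E}$ and to exploit its independence from the family~$\{Y_i\}_{i\in\mathcal{I}}$. For a fixed subset $S\subseteq\mathcal{I}$, independence implies that the conditional law of $(Y_i)_{i\in\mathcal{I}}$ given the event $\{\mathcal{E}=S\}$ coincides with its unconditional law; in particular, $\mbE\bigl[\abs{Y_i}^\ell \bigm| \mathcal{E}=S\bigr] = \mbE\bigl[\abs{Y_i}^\ell\bigr]\le y^\ell$ for every $i\in S$. Since $\ell\in\IN$, so that $\ell\ge 1$, an application of Minkowski's inequality in the conditional probability space yields
\[
   \Bigl(\mbE\Bigl[\,\Bigl|{\textstyle\sum_{i\in S}Y_i}\Bigr|^\ell \,\Bigm|\, \mathcal{E}=S\Bigr]\Bigr)^{1/\ell}
      \Le \sum_{i\in S}\bigl(\mbE\bigl[\abs{Y_i}^\ell\bigm|\mathcal{E}=S\bigr]\bigr)^{1/\ell}
      \Le \abs{S}\,y,
\]
where the empty sum is interpreted as~$0$, in which case the bound holds trivially.

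Next, I would observe that, consequently, almost surely
\[
   \mbE\Bigl[\,\Bigl|{\textstyle\sum_{i\in\mathcal{E}}Y_i}\Bigr|^\ell \,\Bigm|\, \mathcal{E}\Bigr] \Le (Ey)^\ell,
\]
since on the event $\{\mathcal{E}=S\}$ the left-hand side equals the quantity just bounded by $(\abs{S}y)^\ell$. Then I would take expectations and invoke the tower property:
\[
   \mbE\Bigl[\,\Bigl|{\textstyle\sum_{i\in\mathcal{E}}Y_i}\Bigr|^\ell\Bigr]
      \Eq \mbE\Bigl[\mbE\Bigl[\,\Bigl|{\textstyle\sum_{i\in\mathcal{E}}Y_i}\Bigr|^\ell \,\Bigm|\, \mathcal{E}\Bigr]\Bigr]
      \Le y^\ell\,\mbE[E^\ell] \Eq y^\ell\,\norm{E}_\ell^\ell,
\]
and taking $\ell$-th roots gives $\bnorm{\sum_{i\in\mathcal{E}}Y_i}_\ell\le y\,\norm{E}_\ell$, which is the assertion.

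The argument is short, and I do not anticipate a genuine obstacle. The only points calling for a little care are the use of independence to drop the conditioning when bounding the conditional moments $\mbE\bigl[\abs{Y_i}^\ell\bigm|\mathcal{E}=S\bigr]$, the applicability of Minkowski's inequality (which is why the hypothesis $\ell\ge 1$, here guaranteed by $\ell\in\IN$, is needed), and the degenerate case $\mathcal{E}=\emptyset$, which is taken care of by the empty-sum convention.
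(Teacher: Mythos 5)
Your proof is correct. The paper itself states this lemma without proof, deferring to Lemma~5.1 of \cite{Bar18}, so there is no in-paper argument to compare against; your conditioning-plus-Minkowski route is the natural one and is almost certainly the argument in that reference. The three steps — using independence to replace conditional moments $\mbE[\abs{Y_i}^\ell\mid\mathcal{E}=S]$ by unconditional ones, applying Minkowski's inequality (valid since $\ell\ge 1$) in the conditional probability space to obtain the bound $(\abs{S}y)^\ell$, and then the tower property to integrate the resulting bound $(Ey)^\ell$ against the law of $E$ — are all sound, and the empty-set case is handled correctly by the empty-sum convention. One point worth being explicit about (but which creates no gap, since $\mathcal{I}$ is finite): you only ever condition on events $\{\mathcal{E}=S\}$ of positive probability, as events of probability zero contribute nothing to the outer expectation.
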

\begin{lemma}\label{BinMom}
Let $n \in \mbN$, $0 \leq p \leq 1$, $Y \sim Bi(n,p)$, and $\ell \in \mbN$. Then $\norm{Y}_\ell \leq A(np, \ell)$, where
\[
A(x,\ell) := \pi e^{e-2} \times 
\begin{cases}
\ell / \log \left( (e-1) \right), \quad & \ell >x, \\
x, & \ell \leq x.
\end{cases}
\]
In particular, $A(x,l) \leq C_A(x \vee l ) \leq C_A (x + l)$, where $C_A := \pi e^{e-2} / \log(e-1)$.
\end{lemma}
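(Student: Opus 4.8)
The plan is to avoid all combinatorial (factorial-moment / Stirling-number) machinery and instead run the classical exponential-moment estimate directly at the level of moments. Write $\lambda := np$; if $\lambda = 0$ then $Y \equiv 0$ and the bound is trivial, so assume $\lambda > 0$. First I would observe that, for any $t > 0$, the elementary inequality $x^{\ell} \le \ell!\,e^{x}$ for $x \ge 0$ (which holds since $e^{x} \ge x^{\ell}/\ell!$) yields, on taking expectations and inserting the binomial moment generating function together with $1 + u \le e^{u}$,
\[
   \mbE[Y^{\ell}] \Le \ell!\,t^{-\ell}\,\mbE[e^{tY}] \Eq \ell!\,t^{-\ell}\,(1 - p + p e^{t})^{n} \Le \ell!\,t^{-\ell}\,e^{\lambda(e^{t}-1)}.
\]

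Next I would optimise the free parameter. Taking $t := \log(1 + \ell/\lambda) > 0$ makes $\lambda(e^{t}-1) = \ell$ exactly, so that $\mbE[Y^{\ell}] \le \ell!\,e^{\ell}\,(\log(1+\ell/\lambda))^{-\ell}$, i.e.
\[
   \norm{Y}_{\ell} \Le \frac{e\,(\ell!)^{1/\ell}}{\log(1 + \ell/\lambda)} \Le \frac{e\,\ell}{\log(1 + \ell/\lambda)},
\]
using the crude bound $(\ell!)^{1/\ell} \le \ell$. I would then simplify the right-hand side using only concavity of $u \mapsto \log(1+u)$ on $[0,\infty)$: if $\ell > \lambda$ then $\ell/\lambda > 1$, so $\log(1+\ell/\lambda) \ge \log 2$, giving $\norm{Y}_{\ell} \le (e/\log 2)\,\ell$; if $\ell \le \lambda$ then $\ell/\lambda \in [0,1]$, so $\log(1+\ell/\lambda) \ge (\ell/\lambda)\log 2$, giving $\norm{Y}_{\ell} \le (e/\log 2)\,\lambda$. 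In both cases $\norm{Y}_{\ell} \le (e/\log 2)\,\max\{np,\ell\}$.

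Finally I would match this to the stated bound. Since $e/\log 2 \approx 3.92$ is smaller than both $\pi e^{e-2} \approx 6.44$ (the constant for $\ell \le x$) and $\pi e^{e-2}/\log(e-1) \approx 11.9$ (the constant for $\ell > x$), the bound just obtained is $\le A(np,\ell)$ with $A$ as defined — in fact with substantial room to spare — and the ``in particular'' inequality $A(x,\ell) \le C_A(x \vee \ell) \le C_A(x + \ell)$ is immediate from the definitions of $A$ and $C_A$ together with $1/\log(e-1) > 1$. There is no genuine obstacle in this argument; the only delicate point is constant bookkeeping, and I would simply record that the clean constant $e/\log 2$ suffices, rather than reproducing the precise constants $\pi e^{e-2}$, $\log(e-1)$ of the cited reference (those can be recovered, if one wishes, by keeping the sharper Stirling bound $\ell! \le e\sqrt{\ell}\,(\ell/e)^{\ell}$ and a slightly less aggressive choice of $t$, but that refinement is not needed for any application in the paper).
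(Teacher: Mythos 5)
Your proof is correct, and every step checks out: the elementary inequality $x^{\ell}\le \ell!\,e^{x}$, the binomial MGF bound $(1-p+pe^{t})^{n}\le e^{\lambda(e^{t}-1)}$, the choice $t=\log(1+\ell/\lambda)$ that makes $\lambda(e^{t}-1)=\ell$, the crude $(\ell!)^{1/\ell}\le\ell$, and the two concavity estimates $\log(1+u)\ge\log 2$ for $u\ge 1$ and $\log(1+u)\ge u\log 2$ for $u\in[0,1]$, together giving $\norm{Y}_{\ell}\le(e/\log 2)\max\{np,\ell\}$; and $e/\log 2\approx 3.92$ is indeed below both $\pi e^{e-2}\approx 6.44$ and $\pi e^{e-2}/\log(e-1)\approx 11.9$, so the stated bound (and the ``in particular'' clause) follow immediately.

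The paper itself does not prove this lemma; it quotes it as Lemma~5.2 of the cited reference (Barbour, Ross, Wen 2018) and reproduces their specific constants. The appearance of $\pi$, $e^{e-2}$ and $\log(e-1)$ in that statement points to an argument using Stirling's bound $\ell!\le \sqrt{2\pi\ell}\,(\ell/e)^{\ell}e^{1/(12\ell)}$ and the fixed parameter choice $e^{t}=e-1$ in the Chernoff step (so $t=\log(e-1)$ and $e^{t}-1=e-2$), applied uniformly rather than optimizing $t$ case by case. Your route — $(\ell!)^{1/\ell}\le\ell$ in place of Stirling, and the genuinely optimal $t=\log(1+\ell/\lambda)$ followed by concavity of $\log(1+u)$ — is shorter, avoids Stirling entirely, and produces a strictly better universal constant $e/\log 2$, which you then correctly round up to the weaker stated constants. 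Given that the paper only ever uses this lemma in the form $\norm{Y}_{\ell}\le C_A(x\vee\ell)$, your cleaner constant would serve every downstream application; so the only thing your proposal ``loses'' relative to the cited source is the exact cosmetic form of $A(x,\ell)$, which, as you note, can be recovered by reintroducing Stirling and a non-optimal $t$ if one insists on matching the reference verbatim.
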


\section{Acknowledgments}

The authors were partially supported by the Australian Research Council (ARC) Grant DP150101459 and 
the ARC Centre of Excellence for Mathematical and Statistical Frontiers, CE140100049. PB was partially supported by 
ARC Grant FL130100039. We thank the referees for their comments.

%
%\bibliographystyle{mynatbib}
%\bibliography{LLTrefs}

\end{document}